\documentclass{amsart}

\usepackage{amsmath,amssymb}
\usepackage{amsthm}
\usepackage{amsrefs}
\usepackage{indentfirst}
\usepackage{mathrsfs}
\usepackage{hyperref}
\usepackage{xcolor}
\usepackage[margin=1in]{geometry}
\usepackage{nicefrac}
\usepackage{enumerate}
\usepackage{graphicx}
\usepackage{subfigure}

\DeclareMathOperator*{\argmin}{argmin}

\newlength{\leftstackrelawd}
\newlength{\leftstackrelbwd}
\def\leftstackrel#1#2{\settowidth{\leftstackrelawd}%
	{${{}^{#1}}$}\settowidth{\leftstackrelbwd}{$#2$}%
	\addtolength{\leftstackrelawd}{-\leftstackrelbwd}%
	\leavevmode\ifthenelse{\lengthtest{\leftstackrelawd>0pt}}%
	{\kern-.5\leftstackrelawd}{}\mathrel{\mathop{#2}\limits^{#1}}}

\numberwithin{equation}{section}
\newtheorem{theorem}{Theorem}[section]
\newtheorem{lemma}[theorem]{Lemma}
\newtheorem{remark}[theorem]{Remark}

\newtheorem{assumption}[theorem]{Assumption}
\newtheorem{definition}[theorem]{Definition}

\allowdisplaybreaks[4]

\title[Sobolev Gradient Flows for Rotating Gross-Pitaevskii Energy Functional]{Convergence Analysis of Sobolev Gradient Flows for the Rotating Gross-Pitaevskii Energy Functional}
\author{Chen Zhang}
\address{(CZ) School of Mathematical Sciences, Fudan University, Shanghai, 200433, People's Republic of China. }
\email{24210180137@m.fudan.edu.cn}
\author{Patrick Henning}
\address{(PH) Department of Mathematics, Ruhr-University Bochum, DE-44801 Bochum, Germany. }
\email{patrick.henning@rub.de}
\author{Mahima Yadav}
\address{(MY) Department of Mathematics, Ruhr-University Bochum, DE-44801 Bochum, Germany. }
\email{mahima.yadav@rub.de}
\author{Wenbin Chen}
\address{(WC) School of Mathematical Sciences and Shanghai Key Laboratory for Contemporary Applied Mathematics, Fudan University, Shanghai, 200433, People's Republic of China.}
\email{wbchen@fudan.edu.cn}

\thanks{This paper was submitted for publication on October 8, 2025.}

\begin{document}

\begin{abstract}
	This paper studies the numerical approximation of the ground state of rotating Bose--Einstein condensates, formulated as the minimization of the Gross--Pitaevskii energy functional under a mass conservation constraint. To solve this problem, we consider three Sobolev gradient flow schemes: the $H_0^1$ scheme, the $a_0$ scheme, and the $a_u$ scheme. Convergence of these schemes in the non-rotating case was established by Chen et al.~\cite{chen2024convergence}, and the rotating $a_u$ scheme was analyzed in Henning et al.~\cite{henning2025convergence}. In this work, we prove the global convergence of the $H_0^1$ and $a_0$ schemes in the rotating case, and establish local linear convergence for all three schemes near the ground state. Numerical experiments confirm our theoretical findings.
\end{abstract}
\keywords{Gross--Pitaevskii energy functional, Bose--Einstein condensates, Sobolev gradient flow.}
	
\maketitle

\section{Introduction}

At extremely low temperatures, dilute bosonic gases exhibit a peculiar state of matter known as Bose--Einstein condensate, or BEC for short~\cites{alma991054761079706011,bose1924plancks,einstein2005quantentheorie}. The ground state of BEC, which is the lowest energy state, is a crucial research problem that has received significant attention. Mathematically, the ground state is defined as the minimizer of the Gross--Pitaevskii energy functional on the $L^2$ sphere, where the energy functional takes the form as following~\cites{bao2012mathematical,bao2014mathematical}: for any $u \in \mathcal{M}$,
\begin{equation*}
    E(u) := \frac{1}{2}\int_{\mathcal{D}} \left(|\nabla u|^2 + V|u|^2 + \frac{\beta}{2}|u|^4 - \Omega \overline{u} L_z u\right) dr,
\end{equation*}
The $L^2$ sphere $\mathcal{M}$ is defined as
\begin{equation*}
    \mathcal{M} := \{u\in H_0^1(\mathcal{D}; \mathbb{C}): \|u\|_{L^2(\mathcal{D})}=1\},
\end{equation*}
where the integration domain $\mathcal{D}\subseteq \mathbb{R}^d$ ($d=2,3$) is a bounded region, $\mathbb{C}$ denotes the complex number field, the domain of the functional is $H_0^1(\mathcal{D}; \mathbb{C})$, the function $V\in L^{\infty}(\mathcal{D};\mathbb{R}_{\ge 0})$ represents the potential, the constant $\beta\in \mathbb{R}_{\ge 0}$ describes interparticle repulsion, $\Omega\in \mathbb{R}_{\ge 0}$ is the angular velocity, and $L_z$ denotes the third component of angular momentum component, defined as
\begin{equation*}
    L_z := -\mathrm{i} (x\partial_y - y\partial_x),
\end{equation*}
where the imaginary unit $\mathrm{i}=\sqrt{-1}$, and $\overline{u}$ denotes the complex conjugate of the wave function $u$. The $L^2$ spherical condition can be regarded as a constraint for the mass of the condensates. For fundamental theories and mathematical properties of the ground state, we refer to~\cites{bao2005ground,bao2012mathematical,bao2014mathematical}. A particularly important case is the non-rotating scenario ($\Omega = 0$), where formal analytical solutions can be derived~\cite{trallero2008formal} and the stability of solutions has been studied~\cite{jackson2005stability}.

By considering the constrained Euler--Lagrange equations, the optimization problem for the ground state of BEC can be transformed into the Gross--Pitaevskii eigenvalue equation:
\begin{equation}\label{equation_2_7}
    -\Delta u + Vu + \beta |u|^2 u - \Omega L_z u = \lambda u,
\end{equation}
After spatial discretization, this problem can be solved using eigenvalue methods. For the discretization, one can consider spectral and pseudo-spectral methods~\cites{bao2012mathematical,bao2014mathematical,cances2010numerical,antoine2014robust}, Lagrange finite elements methods~\cites{danaila2010finite,chen2011adaptive,cances2010numerical,henning2025discrete,zhou2003analysis}, mixed FEM~\cite{GHLP25}, discontinuous Galerkin composite FEM~\cite{engstrom2022higher}, spectral element methods~\cite{chen2024fully}, generalized finite element methods~\cites{henning2014two,henning2023optimal}, multigrid methods~\cites{zhang2019efficient,xie2016multigrid}, etc. General  approximation results are found in~\cite{zhou2003analysis} and adaptivity and a posteriori error estimates are e.g. addressed in~\cites{heid2021gradient,dusson2017posteriori,dussonmaday23}.

To solve the discretized problem, various eigenvalue methods have been proposed, such as self-consistent field iterations~\cites{cances2000can,upadhyaya2020density,dion2007ground,cances2000convergence,defranceschi2000scf}, generalized inverse iterations~\cites{jarlebring2014inverse,henning2023dependency,altmann2021j}, two-grid continuation methods~\cite{chien2006two} or Newton-type methods~\cites{xu2021multigrid,wu2017regularized,altmann2024riemannian}. For the latest advancements on this problem, we refer to~\cite{henning2025gross}. 

In addition to these eigenvalue methods, gradient flow approaches have also played a significant role, most notably the $L^2$ gradient flow~\cite{bao2004computing} and various Sobolev gradient flows~\cites{APS22,danaila2010new,danaila2017computation,zhang2022exponential,heid2021gradient,kazemi2010minimizing,chen2024convergence,henning2020sobolev}. In this work, we focus on the projected Sobolev gradient flow method. In short, the gradient flow we consider takes the form~\cite{chen2024convergence}
\begin{equation*}
    \frac{\partial u}{\partial t} = -\nabla_{X}^{\mathcal{R}}E(u) = -\mathcal{P}_{u, X}(\nabla_{X}E(u)),
\end{equation*}
where $\nabla_{X}^{\mathcal{R}}E(u(t))$ is the Riemannian gradient of the functional $E$ with respect to the $X$-inner product, i.e., the projection of the gradient $\nabla_{X}E(u(t))$ onto the tangent space $T_u \mathcal{M}$ under the operator $\mathcal{P}_{u, X}$. Specifically, three gradient flow schemes have received particular attention: the $H^1$ gradient flow scheme~\cite{kazemi2010minimizing}, the $a_0$ gradient flow scheme~\cite{danaila2010new}, and the $a_u$ gradient flow scheme~\cite{henning2020sobolev}.

From an algorithmic perspective, these gradient flow algorithms are rich and highly efficient, and their convergence analysis is critical. Most current convergence results focus on the non-rotating case. Early work by Kazemi and Eckart~\cite{kazemi2010minimizing} demonstrated the global convergence of the continuous $H_0^1$ gradient flow. In contrast, Faou and Jézéquel~\cite{faou2018convergence} subsequently established the local convergence of the discrete $L^2$ gradient flow. Extending these investigations, Henning and Peterseim~\cite{henning2020sobolev} provided a global convergence proof for both continuous and semidiscrete $a_u$ gradient flows. Zhang~\cite{zhang2022exponential} further explored the local convergence of the semidiscrete $a_u$ gradient flow under specific conditions. More recently, significant progress has been made by Chen et al.~\cite{chen2024convergence}, who proved global and local convergence for semidiscrete $H_0^1$ and $a_0$ gradient flows, alongside establishing local convergence for a more general semidiscrete $a_u$ gradient flow. 

Compared to the non-rotating case, the rotating case is more challenging, as the rotation term leads to difficulties such as non-uniqueness of the ground state and the breakdown of linearization analysis. Consequently, the proof technique of~\cite{chen2024convergence} becomes ineffective in the rotating frame. Recently, Henning and Yadav~\cite{henning2025convergence} established global and local convergence results for the semidiscrete $a_u$ gradient flow. However, their auxiliary iteration and fixed-point methods cannot be directly extended to the other two gradient flows, which poses additional challenges for our analysis. In this paper, building on the framework of~\cites{henning2025convergence,henning2025discrete} and combining it with the methods of~\cite{chen2024convergence}, we study the quotient space of $H_0^1$ function spaces and its associated quotient metric to derive convergence results in the rotating case.

A recent and independent preprint by Feng and Tang~\cite{FengTang2025} analyzes precisely the same Sobolev gradient schemes considered here, formulated within a unified preconditioned Riemannian-gradient framework. They establish global convergence of the corresponding gradient flow under general conditions, and, under an additional Morse--Bott assumption, prove a Polyak--\L{}ojasiewicz inequality leading to explicit local linear convergence rates and an expression for an optimal preconditioner. Their results complement the present work. Although the underlying algorithms coincide, the analytical perspectives differ: the present paper provides detailed, scheme-specific convergence proofs for these classical Sobolev gradient flows, including global convergence for the $H_0^1$ and $a_0$ schemes and local linear convergence in a quotient metric for all three schemes, based on direct energy estimates and quotient-space arguments. In contrast to the Polyak--\L{}ojasiewicz approach, our analysis follows the energy-method techniques in the spirit of~\cite{chen2024convergence} and establishes convergence properties by direct control of the energy and its second variation, which allows us to handle the phase invariance of the Gross--Pitaevskii functional in a constructive way. Thus, while~\cite{FengTang2025} offers a unified and quantitatively sharp Riemannian framework highlighting optimal preconditioning, our analysis provides a complementary, constructive viewpoint at the level of concrete Sobolev gradient flows commonly used in computations.

The remainder of this paper is structured as follows. Section \ref{section_2} reviews the three Sobolev gradient flow schemes. Section \ref{section_3} presents the main convergence results. Section \ref{section_4} establishes global convergence, while Section \ref{section_5} addresses local convergence. Finally, Section \ref{section_6} reports the numerical experiments.

\section{Sobolev Gradient Flow}\label{section_2}

Consider the constrained optimization problem for the Gross--Pitaevskii energy functional with a rotating term:
\begin{equation}\label{equation_2_6}
    \min_{u \in \mathcal{M}} E(u) := \int_\mathcal{D} \left( \frac{1}{2}|\nabla u|^2 + \frac{1}{2}V|u|^2 + \frac{\beta}{4}|u|^4 - \frac{\Omega}{2} \overline{u} L_z u \right) dr,
\end{equation}
where the integration domain $\mathcal{D}\subseteq \mathbb{R}^d$ $(d=2,3)$ is bounded, with its bound $M=\sup\{|r|:r\in \mathcal{D}\}$. 

For a point in $\mathcal{D}$, we further denote $r=(x,y)$ for $d=2$ and $r=(x,y,z)$ for $d=3$. The constraint manifold is $\mathcal{M}=\{u\in H_0^1(\mathcal{D};\mathbb{C}):\|u\|_{L^2(\mathcal{D})}=1\}$, $V\in L^{\infty}(\mathcal{D}; \mathbb{R}_{\ge 0})$ is the trapping potential, with $V_{\max}:=\|V\|_{L^{\infty}}$; $\beta>0$ describes the strength of particle repulsion, $\Omega\ge 0$ is the angular velocity of rotation, and $L_z=-\mathrm{i}(x\partial_y-y\partial_x)$ is the $z$-component of the angular momentum. To guarantee the existence of a ground state, the following assumption is required~\cite{henning2025convergence}:
\begin{assumption}\label{assumption_2_4}
    There exists a positive constant $K$ such that the potential $V$ and the angular velocity $\Omega$ satisfy
    \begin{equation*}
        V(r) \ge \frac{1+K}{4}\Omega^2 (x^2 + y^2)\quad\quad \forall r\in \mathcal{D}. 
    \end{equation*}
\end{assumption}
This assumption has a clear physical interpretation: it guarantees that centrifugal forces do not outweigh the strength of the trapping potential.

For brevity, the domain $\mathcal{D}$ is omitted when writing the inner product $(\cdot,\cdot)_{X(\mathcal{D})}$ and norm $\|\cdot\|_{X(\mathcal{D})}$. $H_0^1(\mathcal{D};\mathbb{C})$ is abbreviated as $H_0^1(\mathcal{D})$, and the same applies to other complex-valued function spaces. In the paper, we need the following Sobolev embedding. 

\begin{lemma}\label{lemma_2_1}
    For $d=2, 3$, there exist constants $C_1$, $C_2$, $C_3$, and $C_4$ depending only on $\mathcal{D}$ and $d$, such that
    \begin{align*}
        &\|u\|_{L^4} \le C_1 \|u\|_{H_0^1} \quad\quad\forall u\in H_0^1(\mathcal{D}),\\
        &\|u\|_{H^{-1}} \le C_2 \|u\|_{L^{4/3}} \quad\quad\forall u\in L^{4/3}(\mathcal{D}),\\
        &\|u\|_{L^2} \le C_3 \|u\|_{H_0^1} \quad\quad\forall u\in H_0^1(\mathcal{D}), \\
        &\|u\|_{L^6} \le C_4 \|u\|_{H_0^1} \quad\quad\forall u\in H_0^1(\mathcal{D}).
    \end{align*}
    The third inequality is also known as Poincaré's inequality. 
\end{lemma}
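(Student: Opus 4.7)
The plan is to reduce all four inequalities to classical Sobolev embedding statements on the bounded domain $\mathcal{D}\subset\mathbb{R}^d$ with $d\in\{2,3\}$, and then derive the second inequality from the first by duality. Since the norms on complex-valued Sobolev spaces reduce to their real counterparts by splitting $u$ into real and imaginary parts (using $|\nabla u|^2=|\nabla\mathrm{Re}\,u|^2+|\nabla\mathrm{Im}\,u|^2$ and $|u|^2=|\mathrm{Re}\,u|^2+|\mathrm{Im}\,u|^2$), there is no loss in passing to the real setting.

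First I would dispatch the first and fourth inequalities jointly by invoking the standard Sobolev embedding $H_0^1(\mathcal{D})\hookrightarrow L^p(\mathcal{D})$, which is continuous for every $p\in[2,\infty)$ when $d=2$ and for every $p\in[2,6]$ when $d=3$. Both exponents $p=4$ and $p=6$ lie in this admissible range in either dimension, with $p=6$ being the critical exponent in 3D. This immediately yields constants $C_1$ and $C_4$ depending only on $\mathcal{D}$ and $d$. The third estimate is then just the classical Poincaré inequality for $H_0^1$-functions on the bounded domain $\mathcal{D}$, with constant $C_3=C_3(\mathcal{D})$.

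Next, for the second inequality I would use duality together with Hölder's inequality. Since $L^4(\mathcal{D})$ and $L^{4/3}(\mathcal{D})$ form a Hölder-conjugate pair, for any $u\in L^{4/3}(\mathcal{D})$,
\begin{equation*}
\|u\|_{H^{-1}} \;=\; \sup_{\substack{v\in H_0^1(\mathcal{D})\\ \|v\|_{H_0^1}\le 1}} |(u,v)_{L^2}| \;\le\; \sup_{\|v\|_{H_0^1}\le 1} \|u\|_{L^{4/3}}\,\|v\|_{L^4} \;\le\; C_1\,\|u\|_{L^{4/3}},
\end{equation*}
so one may take $C_2=C_1$, again depending only on $\mathcal{D}$ and $d$.

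There is essentially no analytic obstacle here, as all four statements are textbook Sobolev embedding results. The only point worth singling out is the dimensional restriction $d\le 3$, which is precisely what ensures that the critical exponent $2d/(d-2)$ (interpreted as $+\infty$ in $d=2$) accommodates both $p=4$ and $p=6$, so that the constants $C_1$ and $C_4$ indeed exist; beyond this, the order of the four steps is important only insofar as the duality estimate for $C_2$ relies on $C_1$ having been produced first.
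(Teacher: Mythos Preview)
Your proposal is correct; these are indeed the standard Sobolev embeddings on a bounded domain, and your duality argument for the second inequality (with $C_2=C_1$) is the natural way to obtain it. The paper itself does not supply a proof of this lemma---it is simply stated as a needed Sobolev embedding result---so your write-up is in fact more detailed than what the paper provides.
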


For a constrained optimization problem, a standard approach, as seen in~\cite{henning2025convergence}, introduces a Lagrange multiplier $\lambda$ and rewrites the constraint condition as $\frac{1}{2}(\|u\|_{L^2}^2-1)=0$, leading to the Euler--Lagrange equation
\begin{equation}\label{equation_2_1}
    \langle E'(u),v \rangle = \lambda \Re(u,v)_{L^2}\quad\quad\forall v\in H_0^1(\mathcal{D}),
\end{equation}
where $\langle\cdot,\cdot\rangle$ denotes the duality pairing between the space $H_0^1(\mathcal{D})$ and its dual space $H^{-1}(\mathcal{D})$, $\Re$ denotes the real part, and $E'(u)\in H^{-1}(\mathcal{D})$ is the real-Fréchet derivative of the functional $E(u)$:
\begin{equation}\label{equation_2_2}
    \langle E'(u),v \rangle = \Re\left(\nabla u, \nabla v\right)_{L^2} +  \Re\left( Vu + \beta|u|^2 u - \Omega L_z u, v\right)_{L^2} \quad\quad\forall v\in H_0^1(\mathcal{D}).
\end{equation}
Substituting equation \eqref{equation_2_2} into equation \eqref{equation_2_1} yields the Gross--Pitaevskii eigenvalue problem \eqref{equation_2_7}.

In the following, we denote $u^*:=\argmin\limits_{u \in \mathcal{M}} E(u)$, and its corresponding eigenvalue as $\lambda$. If $(\cdot,\cdot)_X$ defines an inner product on $H_0^1(\mathcal{D})$, the Riesz representation theorem guarantees the existence of a unique function $\nabla_X E(u) \in H_0^1(\mathcal{D})$ such that
\begin{equation}\label{equation_2_5}
    \Re (\nabla_X E(u), h)_X = \langle E'(u), h \rangle \quad\quad \forall h \in H_0^1(\mathcal{D}).
\end{equation}
The function $\nabla_X E(u)$ satisfying equation \eqref{equation_2_5} is the Sobolev gradient of the functional $E(u)$ with respect to the $X$-metric. To ensure that the iterative process remains on the manifold $\mathcal{M}$, we use the projected Sobolev gradient on the tangent space. At $u\in\mathcal{M}$, the tangent space of the manifold is given by
\begin{equation*}
    T_u \mathcal{M} = \{ v \in H_0^1(\mathcal{D}; \mathbb{C}) : \Re(u, v)_{L^2} = 0 \}, 
\end{equation*}
and the $X$-orthogonal projection on $T_u \mathcal{M}$ is
\begin{equation*}
    \mathcal{P}_{u,X} w = w - \frac{\Re(w, \mathcal{G}_X u)_X}{\|\mathcal{G}_X u\|_X^2} \mathcal{G}_X u, 
\end{equation*}
where the operator $\mathcal{G}_X\in L(H^{-1}(\mathcal{D}), H_0^1(\mathcal{D}))$ satisfies
\begin{equation*}
    (\mathcal{G}_X(w), h)_X = (w, h)_{L^2} \quad\quad \forall w,h \in H_0^1(\mathcal{D}). 
\end{equation*}
Accordingly, the projected Sobolev gradient of $\nabla_X E(u)$ (also known as Riemmanian gradient) is
\begin{equation*}
    \nabla^{\mathcal{R}}_X E(u) = \mathcal{P}_{u,X}(\nabla_X E(u)),    
\end{equation*}
and the corresponding (continuous) Sobolev gradient flow reads
\begin{equation*}
    \frac{\partial u}{\partial t} = -\nabla^{\mathcal{R}}_X E(u),
\end{equation*}
This paper mainly focuses on the forward Euler scheme for the continuous Sobolev gradient flow, which is
\begin{equation*}
    u_{n+1} = R\left(u_n - \alpha_n \nabla^{\mathcal{R}}_X E(u_n)\right), 
\end{equation*}
where the step size $\alpha_n$ is bounded, satisfying $0<\alpha_{\min}\le\alpha_n\le\alpha_{\max}$, and $R$ is the retraction map
\begin{equation*}
    R(v) := \frac{v}{\|v\|_{L^2}}\quad\quad\forall v\in H^1_0(\mathcal{D})\setminus \{0\}, 
\end{equation*}
ensuring that each iteration step remains on the constraint manifold $\mathcal{M}$. 

Next, we introduce the three main inner products $X$ involved in this paper, along with their corresponding gradients $\nabla_X E(u)$ and projected gradients $\nabla^{\mathcal{R}}_X E(u)$. With these projected gradients, we can construct the respective $X$ schemes. 

\begin{enumerate}
    \item $H_0^1$ scheme~\cite{kazemi2010minimizing}. Taking the inner product $(\cdot,\cdot)_X=(\cdot,\cdot)_{H_0^1}$, defined as
        \begin{equation*}
            (v, w)_{H_0^1} := (\nabla v, \nabla w)_{L^2} = \int_\mathcal{D} \nabla \overline{v} \cdot \nabla w dr\quad\quad\forall v,w\in H_0^1(\mathcal{D}; \mathbb{C}). 
        \end{equation*}
        Then $\mathcal{G}_{H_0^1}=(-\Delta)^{-1}$. The corresponding gradient and projected gradient are
        \begin{align*}
            \nabla_{H_0^1} E(u) &= u + \mathcal{G}_{H_0^1}(Vu + \beta|u|^2 u - \Omega L_z u), \\
            \nabla^{\mathcal{R}}_{H_0^1} E(u) &= \nabla_{H_0^1} E(u) - \frac{1+\Re(u, \mathcal{G}_{H_0^1}(Vu+\beta|u|^2 u - \Omega L_z u))_{L^2}}{\|\mathcal{G}_{H_0^1} u\|_{H_0^1}^2} \mathcal{G}_{H_0^1} u. 
        \end{align*}
    \item $a_0$ scheme~\cite{danaila2010new}. Taking the inner product $(\cdot,\cdot)_X=(\cdot,\cdot)_{a_0}$, defined as
        \begin{equation*}
            (v, w)_{a_0} := \int_\mathcal{D} (\nabla \overline{v} \cdot \nabla w + V \overline{v} w - \Omega \overline{v} L_z w) dr. 
        \end{equation*}
        Then $\mathcal{G}_{a_0} = (-\Delta+V-\Omega L_z)^{-1}$. The corresponding gradient and projected gradient are
        \begin{equation*}
            \nabla_{a_0} E(u) = u + \mathcal{G}_{a_0}(\beta|u|^2 u), \quad\quad \nabla^{\mathcal{R}}_{a_0} E(u) = \nabla_{a_0} E(u) - \frac{1+\Re(u, \mathcal{G}_{a_0}(\beta|u|^2 u))_{L^2}}{\|\mathcal{G}_{a_0} u\|_{a_0}^2} \mathcal{G}_{a_0} u. 
        \end{equation*}
    \item $a_u$ scheme~\cite{henning2020sobolev}. Taking the inner product $(\cdot,\cdot)_X=(\cdot,\cdot)_{a_u}$, defined as
        \begin{equation*}
            (v, w)_{a_u} = \int_\mathcal{D} (\nabla \overline{v} \cdot \nabla w + V \overline{v} w + \beta |u|^2 \overline{v} w - \Omega \overline{v} L_z w) dr.  
        \end{equation*}
        Then $\mathcal{G}_{a_u} = (-\Delta+V+\beta|u|^2-\Omega L_z)^{-1}$. The corresponding gradient and projected gradient are
        \begin{equation*}
            \nabla_{a_u} E(u) =  u, \quad\quad \nabla^{\mathcal{R}}_{a_u} E(u) = u - \frac{\mathcal{G}_{a_u} u}{\|\mathcal{G}_{a_u} u\|_{a_u}^2}. 
        \end{equation*} 
\end{enumerate}

Although these three inner products have been defined above, the positive definiteness of $(\cdot,\cdot)_{a_0}$ and $(\cdot,\cdot)_{a_u}$ as inner products formally requires proof. In fact, the three norms are equivalent, so the above positive definiteness is natural. 

\begin{lemma}[norm equivalence]\label{lemma_2_2}
    For a fixed function $u \in H_0^1(\mathcal{D})$, the $H_0^1$-norm, the $a_0$-norm, and the $a_u$-norm are equivalent. Specifically, for any $v\in H_0^1(\mathcal{D})$, the following inequalities hold:
    \begin{align*}
        &\sqrt{\frac{K}{1+K}}\|v\|_{H_0^1} \le \|v\|_{a_0} \le \left(1 + \Omega M C_3 + C_3^2\|V\|_{L^\infty}\right)^{\frac{1}{2}} \|v\|_{H_0^1}, \\
        &\|v\|_{a_0} \le \|v\|_{a_u} \le \left(1 + \frac{1+K}{K}\beta C_1^4 \|u\|_{H_0^1}^2\right)^{\frac{1}{2}} \|v\|_{a_0}. 
    \end{align*}
\end{lemma}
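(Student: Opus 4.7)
The plan is to verify the two chains of inequalities separately, relying on Assumption~\ref{assumption_2_4} for the lower bound of $\|\cdot\|_{a_0}$ and on the Sobolev embeddings from Lemma~\ref{lemma_2_1} to handle the quartic term separating $\|\cdot\|_{a_u}$ from $\|\cdot\|_{a_0}$.

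For the \emph{upper} bound $\|v\|_{a_0} \lesssim \|v\|_{H_0^1}$, I would expand the definition of $\|v\|_{a_0}^2$ and bound the three contributions individually. The potential term is controlled by $\|V\|_{L^\infty}\|v\|_{L^2}^2$, and Poincaré ($\|v\|_{L^2}\le C_3\|v\|_{H_0^1}$) brings out the factor $C_3^2\|V\|_{L^\infty}$. For the rotation term I would use the pointwise estimate $|L_z v|\le \sqrt{x^2+y^2}\,|\nabla v|\le M|\nabla v|$, together with Cauchy--Schwarz and Poincaré, giving $|\Omega \Re\int \overline{v}L_z v|\le \Omega M C_3 \|v\|_{H_0^1}^2$. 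Summing yields the stated constant.

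The \emph{lower} bound $\|v\|_{a_0}\gtrsim \|v\|_{H_0^1}$ is the main technical step, as it is precisely where Assumption~\ref{assumption_2_4} enters. I would again use $|L_z v|\le \sqrt{x^2+y^2}\,|\nabla v|$, then apply the weighted Young inequality
\[
    \Omega \sqrt{x^2+y^2}\,|v|\,|\nabla v| \le \varepsilon |\nabla v|^2 + \frac{\Omega^2}{4\varepsilon}(x^2+y^2)|v|^2,
\]
and choose $\varepsilon = \tfrac{1}{1+K}$. With this choice, $\tfrac{\Omega^2}{4\varepsilon}(x^2+y^2)=\tfrac{(1+K)\Omega^2}{4}(x^2+y^2) \le V$ by Assumption~\ref{assumption_2_4}, so the potential term fully absorbs the rotation remainder and the leftover gradient contribution is $(1-\varepsilon)\|\nabla v\|_{L^2}^2 = \tfrac{K}{1+K}\|v\|_{H_0^1}^2$. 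This is the only place where the assumption is really used, and it is the step I expect to need the most care: the coefficient in the assumption and the choice of $\varepsilon$ must be matched exactly to recover the claimed $\sqrt{K/(1+K)}$.

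For the second chain, the lower bound $\|v\|_{a_0}\le \|v\|_{a_u}$ is immediate since $\|v\|_{a_u}^2 - \|v\|_{a_0}^2 = \beta\int |u|^2|v|^2 \ge 0$. For the upper bound I would estimate this same difference by Hölder and the embedding $H_0^1 \hookrightarrow L^4$ from Lemma~\ref{lemma_2_1}:
\[
    \beta \int |u|^2|v|^2 \le \beta\|u\|_{L^4}^2 \|v\|_{L^4}^2 \le \beta C_1^4 \|u\|_{H_0^1}^2 \|v\|_{H_0^1}^2.
\]
Converting $\|v\|_{H_0^1}^2$ to $\|v\|_{a_0}^2$ via the just-proved bound $\|v\|_{H_0^1}^2 \le \tfrac{1+K}{K}\|v\|_{a_0}^2$ and adding to $\|v\|_{a_0}^2$ gives the stated constant. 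No part of this second chain is delicate; the whole weight of the lemma lies in executing the Young-inequality step above with the precise constant from Assumption~\ref{assumption_2_4}.
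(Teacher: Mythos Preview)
Your proposal is correct and follows essentially the approach the paper indicates (the paper itself only sketches the proof as ``can be obtained using Cauchy--Schwarz inequality and Lemma~\ref{lemma_2_1}''). You have in fact spelled out more than the paper does, correctly identifying that Assumption~\ref{assumption_2_4} is the key ingredient for the lower bound $\sqrt{K/(1+K)}\|v\|_{H_0^1}\le\|v\|_{a_0}$ via the weighted Young inequality with $\varepsilon=1/(1+K)$.
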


Lemma \ref{lemma_2_2} can be obtained using Cauchy--Schwarz inequality and Lemma \ref{lemma_2_1}. 

\section{Main Results}\label{section_3}

In this section, we present the main results of this paper, which generalize the convergence results of~\cite{chen2024convergence} to the rotating case. First, we establish global convergence for three schemes. Since the global convergence result for the $a_u$ scheme has been proven in~\cite{henning2025convergence}, here we only provide the results for the $H_0^1$ and $a_0$ schemes.

\begin{theorem}[energy dissipation for the $H_0^1$ scheme]\label{theorem_3_1}
    Let $u_0 \in \mathcal{M}$ and $\{u_n\}_{n=0}^\infty$ be the iteration sequence generated by the $H_0^1$ scheme
    \begin{equation*}
        u_{n+1} = R(u_n - \alpha_n \nabla^{\mathcal{R}}_{H_0^1} E(u_n)). 
    \end{equation*}
    Then there exist positive constants $C_u$, $C_g$, and $C_{\alpha}$, with $C_{\alpha} \le 1$, depending only on $\mathcal{D}$, $d$, $\beta$, $V$, $\Omega$, and $\|u_0\|_{H_0^1}$, such that for any sequence $\{\alpha_n\}_{n=0}^\infty$ satisfying $0 < \alpha_{\min} \le \alpha_n \le \alpha_{\max} \le C_\alpha$, the following properties hold:
    \begin{enumerate}[(i)]
        \item $\|u_n\|_{H_0^1} \le C_u$,
        \item $\|\nabla_{H_0^1}^{\mathcal{R}} E(u_n)\|_{H_0^1} \le \|\nabla_{H_0^1} E(u_n)\|_{H_0^1} \le C_g$, 
        \item $E(u_n) - E(u_{n+1}) \ge \frac{\alpha_{\min}}{2} \|\nabla^{\mathcal{R}}_{H_0^1} E(u_n)\|_{H_0^1}^2$. 
    \end{enumerate}
\end{theorem}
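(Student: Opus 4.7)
The plan is to establish the three items simultaneously by induction on $n$, closing the implication loop (i)~$\Rightarrow$~(ii)~$\Rightarrow$~(iii)~$\Rightarrow$~(i) at the next step. The base case $n=0$ follows by setting $C_u \geq \|u_0\|_{H_0^1}$ and reusing the arguments of the induction step for (ii) and (iii) at $u_0$; the constants $C_g$ and $C_\alpha$ will in turn depend only on $C_u$, $V_{\max}$, $\beta$, $\Omega M$, and the Sobolev constants of Lemma~\ref{lemma_2_1}.

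For the implication (i)~$\Rightarrow$~(ii), I would use that $\mathcal{G}_{H_0^1} = (-\Delta)^{-1}$ is an isometry from $H^{-1}(\mathcal{D})$ to $H_0^1(\mathcal{D})$, which gives
\begin{equation*}
    \|\nabla_{H_0^1}E(u_n)\|_{H_0^1} \leq \|u_n\|_{H_0^1} + \|V u_n\|_{H^{-1}} + \beta \||u_n|^2 u_n\|_{H^{-1}} + \Omega \|L_z u_n\|_{H^{-1}}.
\end{equation*}
The first three right-hand-side terms are controlled by a polynomial in $\|u_n\|_{H_0^1}$ via the chain $H_0^1 \hookrightarrow L^4$, H\"older, and $L^{4/3} \hookrightarrow H^{-1}$ from Lemma~\ref{lemma_2_1}. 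For the rotation contribution, integration by parts against any $h \in H_0^1(\mathcal{D})$ yields $|\langle L_z u_n, h\rangle| \leq M\|\nabla u_n\|_{L^2}\|h\|_{L^2}$, so $\|L_z u_n\|_{H^{-1}} \leq M C_3 \|u_n\|_{H_0^1}$. Writing $g_n := \nabla^{\mathcal{R}}_{H_0^1}E(u_n)$, the inequality $\|g_n\|_{H_0^1} \leq \|\nabla_{H_0^1}E(u_n)\|_{H_0^1}$ is automatic because $g_n$ is an $H_0^1$-orthogonal projection, yielding (ii) with $C_g = C_g(C_u)$.

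For (iii), I would Taylor-expand $E$ along the retracted Euler step. Set $v_n := u_n - \alpha_n g_n$; the tangency condition $\Re(u_n, g_n)_{L^2}=0$ gives $\|v_n\|_{L^2}^2 = 1 + \alpha_n^2\|g_n\|_{L^2}^2$, and splitting $E = Q + P$ into its quadratic and quartic parts one may write $E(u_{n+1}) = Q(v_n)/\|v_n\|_{L^2}^2 + P(v_n)/\|v_n\|_{L^2}^4$. Expanding each factor in powers of $\alpha_n$, the linear-in-$\alpha_n$ contribution equals $-\alpha_n\langle E'(u_n), g_n\rangle = -\alpha_n\Re(\nabla_{H_0^1}E(u_n), g_n)_{H_0^1} = -\alpha_n\|g_n\|_{H_0^1}^2$, where the last equality uses that $g_n$ is the $H_0^1$-orthogonal projection of $\nabla_{H_0^1}E(u_n)$ onto $T_{u_n}\mathcal{M}$. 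The residual is $\mathcal{O}(\alpha_n^2)$, and every term appearing in it is controlled by a polynomial in $\|u_n\|_{H_0^1}$, $\|g_n\|_{H_0^1}$, $V_{\max}$, $\beta$, and $\Omega M$, hence by $C_u$ and $C_g$. Choosing $C_\alpha$ small enough that this residual does not exceed $\tfrac{\alpha_n}{2}\|g_n\|_{H_0^1}^2$ then yields (iii).

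To close the induction (iii)~$\Rightarrow$~(i) at step $n+1$, I would invoke Assumption~\ref{assumption_2_4} through the magnetic rewriting
\begin{equation*}
    E(u) = \tfrac{1}{2}\|\nabla u - \mathrm{i}\Omega A u\|_{L^2}^2 + \tfrac{1}{2}\!\int_{\mathcal{D}}\!\left(V - \tfrac{\Omega^2}{4}(x^2+y^2)\right)|u|^2\,dr + \tfrac{\beta}{4}\|u\|_{L^4}^4,
\end{equation*}
with $A := \tfrac{1}{2}(-y, x, 0)$, in which every term is nonnegative under the assumption. Combined with the elementary estimate $\|\nabla u\|_{L^2}^2 \leq 2\|\nabla u - \mathrm{i}\Omega A u\|_{L^2}^2 + \tfrac{\Omega^2}{2}\int_{\mathcal{D}}(x^2+y^2)|u|^2\,dr$ and the energy monotonicity provided by (iii), this yields $\|u_{n+1}\|_{H_0^1}^2 \leq \tfrac{4(1+K)}{K}E(u_0)$, so setting $C_u := \max\{\|u_0\|_{H_0^1}, 2\sqrt{(1+K)E(u_0)/K}\}$ completes the induction. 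The main obstacle is the residual estimate in the Taylor expansion of step three: the retraction denominator $\|v_n\|_{L^2}^{-2}$ and the quartic term $|v_n|^4$ generate cross-products whose uniform control forces the coupled sequential choice $C_u \to C_g \to C_\alpha$. The rotation contribution $\Omega L_z$, not present in~\cite{chen2024convergence}, remains linear in $u$ and is handled by the same integration-by-parts argument used for (ii), so it enlarges the constants without changing the overall structure.
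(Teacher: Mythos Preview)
Your proof is correct and follows the same inductive skeleton as the paper: (i)$\Rightarrow$(ii) via an $H^{-1}$ bound on each summand of $\nabla_{H_0^1}E(u_n)$ (this is exactly the paper's Lemma~\ref{lemma_4_2}), and (iii)$\Rightarrow$(i) at the next step via the coercivity $\|u\|_{H_0^1}^2 \lesssim E(u)$ furnished by Assumption~\ref{assumption_2_4} (the paper phrases this through the norm equivalence $\|u\|_{H_0^1}^2 \le \tfrac{1+K}{K}\|u\|_{a_0}^2 \le \tfrac{2(1+K)}{K}E(u)$ of Lemma~\ref{lemma_2_2}, which is equivalent to your magnetic rewriting). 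The one genuine difference is in step~(iii): the paper splits $u_{n+1} = \tilde u_n + R_n$ with $\tilde u_n = u_n - \alpha_n g_n$ and the retraction correction $R_n = R(\tilde u_n) - \tilde u_n$, bounds $\|R_n\|_{H_0^1} = O(\alpha_n^2)$ directly, and then applies the second-order Taylor estimate of Lemma~\ref{lemma_4_4} twice---once to $E(u_n)-E(\tilde u_n)$ and once to $E(\tilde u_n)-E(\tilde u_n + R_n)$. Your route, exploiting the homogeneity $E(R(v_n)) = Q(v_n)/\|v_n\|_{L^2}^2 + P(v_n)/\|v_n\|_{L^2}^4$ together with $\|v_n\|_{L^2}^2 = 1 + \alpha_n^2\|g_n\|_{L^2}^2$, reaches the same expansion $E(u_{n+1}) = E(u_n) - \alpha_n\|g_n\|_{H_0^1}^2 + O(\alpha_n^2)$ in one pass and avoids the explicit $R_n$ bookkeeping; both arguments produce the same dependence $C_\alpha^{-1} \sim$ polynomial in $C_u$, $C_g$, $V_{\max}$, $\beta$, $\Omega M$.
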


\begin{theorem}[energy dissipation for the $a_0$ scheme]\label{theorem_3_2}
    Let $u_0 \in \mathcal{M}$ and $\{u_n\}_{n=0}^\infty$ be the iteration sequence generated by the $a_0$ scheme
    \begin{equation*}
        u_{n+1} = R(u_n - \alpha_n \nabla^{\mathcal{R}}_{a_0} E(u_n)).
    \end{equation*}
    Then there exist positive constants $C_u$, $C_g$, and $C_{\alpha}$, with $C_{\alpha} \le 1$, depending only on $\mathcal{D}$, $d$, $\beta$, $V$, $\Omega$, and $\|u_0\|_{H_0^1}$, such that for any sequence $\{\alpha_n\}_{n=0}^\infty$ satisfying $0 < \alpha_{\min} \le \alpha_n \le \alpha_{\max} \le C_\alpha$, the following properties hold:
    \begin{enumerate}[(i)]
        \item $\|u_n\|_{a_0} \le C_u$,
        \item $\|\nabla_{H_0^1}^{\mathcal{R}} E(u_n)\|_{a_0} \le \|\nabla_{H_0^1} E(u_n)\|_{a_0} \le C_g$, 
        \item $E(u_n) - E(u_{n+1}) \ge \frac{\alpha_{\min}}{2} \|\nabla^{\mathcal{R}}_{a_0} E(u_n)\|_{a_0}^2$. 
    \end{enumerate}
\end{theorem}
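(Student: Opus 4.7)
The plan is to proceed by induction on $n$, establishing (i)--(iii) simultaneously, mirroring the scheme used for Theorem \ref{theorem_3_1} but carried out entirely in the $a_0$-metric so that the first-order descent term aligns naturally with $\|\nabla_{a_0}^{\mathcal{R}} E(u_n)\|_{a_0}^2$. The base case uses Lemma \ref{lemma_2_2} to pass from $\|u_0\|_{H_0^1}$ to a bound on $\|u_0\|_{a_0}$. In the inductive step, assuming (i) at $u_n$, I first establish (ii), then deduce (iii) for a suitable threshold $C_\alpha$, and finally use (iii) together with coercivity on $\mathcal{M}$ to propagate (i) to $u_{n+1}$.

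For (ii), the identity $\nabla_{a_0} E(u) = u + \mathcal{G}_{a_0}(\beta|u|^2 u)$ reduces the task to bounding $\|\mathcal{G}_{a_0}(\beta|u|^2 u)\|_{a_0}$. Testing the defining relation $(\mathcal{G}_{a_0} w, h)_{a_0} = (w,h)_{L^2}$ with $h = \mathcal{G}_{a_0}(\beta|u|^2 u)$ gives $\|\mathcal{G}_{a_0}(\beta|u|^2 u)\|_{a_0}^2 = \Re(\beta|u|^2 u, \mathcal{G}_{a_0}(\beta|u|^2 u))_{L^2}$, which by Hölder's inequality, the $L^4$-embedding of Lemma \ref{lemma_2_1}, and the norm equivalence of Lemma \ref{lemma_2_2} is polynomial in $\|u_n\|_{a_0}$ and hence controlled by $C_u$. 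Since $a_0$-orthogonal projection onto $T_{u_n}\mathcal{M}$ is non-expansive, this also furnishes the $C_g$ bound on the projected $a_0$-gradient.

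For (iii), write $g_n := \nabla_{a_0}^{\mathcal{R}} E(u_n)$ and $v_n := u_n - \alpha_n g_n$. Tangent-space orthogonality yields $\|v_n\|_{L^2}^2 = 1 + \alpha_n^2\|g_n\|_{L^2}^2$, so the retraction produces $u_{n+1} - v_n = v_n\bigl(\|v_n\|_{L^2}^{-1} - 1\bigr) = O(\alpha_n^2\|g_n\|_{L^2}^2)$ in $H_0^1$. A Taylor expansion of $E$ along the segment $u_n \to u_{n+1}$ gives
\begin{equation*}
    E(u_{n+1}) - E(u_n) = \langle E'(u_n), u_{n+1} - u_n\rangle + \tfrac{1}{2}\langle E''(u_n)(u_{n+1}-u_n), u_{n+1}-u_n\rangle + R_n,
\end{equation*}
where $R_n$ collects the higher-order contributions coming from the quartic term $\tfrac{\beta}{4}\|u\|_{L^4}^4$. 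Splitting $u_{n+1}-u_n = -\alpha_n g_n + (u_{n+1}-v_n)$ and using the Riesz-type relation \eqref{equation_2_5} together with tangent-space orthogonality, the $-\alpha_n g_n$ part contributes exactly $-\alpha_n\|g_n\|_{a_0}^2$ in the first-order term, while the retraction correction contributes $O(\alpha_n^2\|g_n\|_{a_0}^2)$ by (ii) and norm equivalence. The $E''$ piece is bounded using $L^\infty$-boundedness of $V$, the rotation estimate $|\Re(w, L_z w)_{L^2}| \le M\|w\|_{L^2}\|\nabla w\|_{L^2}$, and Lemma \ref{lemma_2_1} for the cubic-nonlinearity contribution, each combined with the inductive bounds. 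All remainder pieces are $O(\alpha_n^2\|g_n\|_{a_0}^2)$, so choosing $C_\alpha$ sufficiently small absorbs them into half of the first-order descent and yields (iii).

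To close the induction, (iii) gives $E(u_{n+1}) \le E(u_0)$. Under Assumption \ref{assumption_2_4}, a Young-inequality argument applied to $\tfrac{\Omega}{2}|\Re(u,L_z u)_{L^2}|$ against the trapping potential yields the coercivity $E(u) \ge \tfrac{K}{2(1+K)}\|u\|_{H_0^1}^2$ on $\mathcal{M}$; this bounds $\|u_{n+1}\|_{H_0^1}$, and hence $\|u_{n+1}\|_{a_0}$ via Lemma \ref{lemma_2_2}, purely in terms of $\|u_0\|_{H_0^1}$. The main obstacle I anticipate is the quantitative bookkeeping of the rotation-induced cross-terms in the second-order remainder: the $a_0$-metric cleanly handles the first-order identity, but $E''$ still mixes rotation with the cubic nonlinearity, so the estimates must be routed through Lemmas \ref{lemma_2_1} and \ref{lemma_2_2} to produce a single constant $C_\alpha$ depending only on the data $\mathcal{D}, d, \beta, V, \Omega, \|u_0\|_{H_0^1}$.
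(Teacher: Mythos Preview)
Your proposal is correct and follows essentially the same inductive strategy as the paper (which proves Theorem~\ref{theorem_3_1} in detail and states that Theorem~\ref{theorem_3_2} is analogous): establish (ii) from (i), derive (iii) from a first-order descent identity plus quadratic/retraction remainders, and close (i) via $E(u)\ge\tfrac12\|u\|_{a_0}^2$ together with energy monotonicity. The only cosmetic difference is that the paper splits the energy expansion into two pieces, $E(u_n)-E(u_n-\alpha_n g_n)$ and $E(u_n-\alpha_n g_n)-E(u_{n+1})$, each handled by the explicit quartic remainder of Lemma~\ref{lemma_4_4}, whereas you perform a single Taylor step through $E''$; the resulting estimates coincide.
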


\begin{remark}
    Although the constants $C_u$, $C_g$, and $C_{\alpha}$ are given in both Theorem \ref{theorem_3_1} and Theorem \ref{theorem_3_2}, these constants are actually different for distinct schemes.
\end{remark}

\begin{theorem}[global convergence for the $H_0^1$ scheme]\label{theorem_3_3}
    For the iteration sequence $\{u_n\}_{n=0}^\infty$ generated by the $H_0^1$ scheme, any weak limit point $u^*$ is a stationary point of the optimization problem \eqref{equation_2_6}, and $\{u_n\}_{n=0}^\infty$ has a subsequence that converges strongly to $u^*$ in $H_0^1$ norm.
\end{theorem}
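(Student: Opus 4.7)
The plan is to exploit the energy dissipation of Theorem~\ref{theorem_3_1} to force $\nabla_{H_0^1}^{\mathcal{R}} E(u_n) \to 0$, then use weak compactness in $H_0^1$ together with compact Sobolev embeddings to pass to the limit in the Euler--Lagrange equation, and finally to upgrade weak convergence of a subsequence to strong $H_0^1$ convergence. By Assumption~\ref{assumption_2_4}, splitting the rotating term via Cauchy--Schwarz and Young against the trapping potential yields coercivity $E(u) \ge c\|u\|_{H_0^1}^2$ for some $c = c(K) > 0$ (cf.~\cite{henning2025convergence}), so $E$ is bounded below on $\mathcal{M}$. Combined with Theorem~\ref{theorem_3_1}(iii), this gives $\sum_n \|\nabla_{H_0^1}^{\mathcal{R}} E(u_n)\|_{H_0^1}^2 < \infty$ and hence $\|\nabla_{H_0^1}^{\mathcal{R}} E(u_n)\|_{H_0^1} \to 0$.

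Next, using $\|u_n\|_{H_0^1} \le C_u$ from Theorem~\ref{theorem_3_1}(i), I extract a subsequence $u_{n_k} \rightharpoonup u^*$ weakly in $H_0^1$, and by Rellich--Kondrachov strongly in $L^p$ for $2 \le p < 2d/(d-2)$, in particular in $L^2$ and $L^4$; in particular $u^* \in \mathcal M$. Writing $\nabla_{H_0^1}^{\mathcal{R}} E(u_n) = \nabla_{H_0^1} E(u_n) - \mu_n \mathcal{G}_{H_0^1} u_n$, the scalar $\mu_n$ is uniformly bounded, so a further extraction gives $\mu_{n_k} \to \lambda^* \in \mathbb{R}$. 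Taking the $H_0^1$-inner product of this identity with an arbitrary $v \in H_0^1(\mathcal{D})$ and using $\mathcal{G}_{H_0^1} = (-\Delta)^{-1}$ yields
\begin{equation*}
    \Re(\nabla u_n, \nabla v)_{L^2} + \Re\bigl(V u_n + \beta|u_n|^2 u_n - \Omega L_z u_n - \mu_n u_n,\, v\bigr)_{L^2} = \Re\bigl(\nabla_{H_0^1}^{\mathcal{R}} E(u_n), v\bigr)_{H_0^1}.
\end{equation*}
Passing to the limit along $n_k$: weak $H_0^1$ convergence handles the gradient pairing; strong $L^2$ and $L^4$ convergence handle the $V$, cubic, and $\mu_n$ terms; the rotating term is treated via self-adjointness, $(L_z u_{n_k}, v)_{L^2} = (u_{n_k}, L_z v)_{L^2}$, which converges by strong $L^2$ convergence of $u_{n_k}$ against the fixed test function $L_z v \in L^2$; and the right-hand side vanishes by Step~1. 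Thus $u^*$ satisfies the Euler--Lagrange equation~\eqref{equation_2_7} with eigenvalue $\lambda^*$, i.e., $u^*$ is a stationary point of~\eqref{equation_2_6}.

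To upgrade to strong $H_0^1$ convergence, I test the same identity with $v = u_{n_k} - u^*$. The principal part becomes $\|u_{n_k}\|_{H_0^1}^2 - \Re(\nabla u_{n_k}, \nabla u^*)_{L^2}$, and weak $H_0^1$ convergence gives $\Re(\nabla u_{n_k}, \nabla u^*)_{L^2} \to \|u^*\|_{H_0^1}^2$. Every remaining $L^2$-pairing vanishes: $V u_n$, $\mu_n u_n$ and $\Omega L_z u_n$ are each bounded in $L^2$ (the last because $x, y$ are bounded on $\mathcal{D}$) while $u_{n_k} - u^* \to 0$ strongly in $L^2$; $|u_n|^2 u_n$ is bounded in $L^{4/3}$ and paired with strong $L^4$ convergence of $u_{n_k} - u^*$; and the right-hand side is controlled by $\|\nabla_{H_0^1}^{\mathcal{R}} E(u_{n_k})\|_{H_0^1} \|u_{n_k} - u^*\|_{H_0^1}$, which vanishes by Step~1. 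Therefore $\|u_{n_k}\|_{H_0^1} \to \|u^*\|_{H_0^1}$, and together with weak convergence this yields $u_{n_k} \to u^*$ strongly in $H_0^1$.

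The main obstacle is the rotating term $L_z u$, a first-order operator for which $L_z u_{n_k} \rightharpoonup L_z u^*$ only weakly in $L^2$, so it cannot be passed to the limit by compactness alone. This is circumvented at two points: when identifying $u^*$ as a stationary point, I shift $L_z$ onto the fixed test function by self-adjointness, reducing the pairing to one controlled by strong $L^2$ convergence of $u_{n_k}$; and in the strong-convergence step, Cauchy--Schwarz together with boundedness of $L_z u_n$ in $L^2$ on the bounded domain $\mathcal{D}$ and strong $L^2$ vanishing of $u_{n_k} - u^*$ disposes of it. A parallel care is required to show that the Lagrange scalar $\mu_n$ stays bounded and that its defining bilinear and trilinear expressions admit convergent subsequences; this is done by invoking Lemma~\ref{lemma_2_1} and Lemma~\ref{lemma_2_2} together with the same compactness/self-adjointness ingredients.
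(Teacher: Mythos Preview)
Your proposal is correct and follows essentially the same approach as the paper: energy dissipation from Theorem~\ref{theorem_3_1} forces the Riemannian gradient to vanish, Rellich--Kondrachov supplies the needed $L^2$/$L^4$ compactness, the rotating term is handled by shifting $L_z$ onto the test function via self-adjointness, and strong convergence is obtained by showing $\|u_{n_k}\|_{H_0^1}\to\|u^*\|_{H_0^1}$. The only cosmetic difference is that the paper packages the limit passage at the level of the Sobolev gradient (showing each piece of $\nabla_{H_0^1}^{\mathcal{R}}E(u_n)$ converges and that $\gamma_n\to\gamma^*$ explicitly), whereas you unwind $\mathcal{G}_{H_0^1}$ and work directly with the variational form, extracting a convergent subsequence of $\mu_n$ instead; both routes are equivalent.
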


\begin{theorem}[global convergence for the $a_0$ scheme]\label{theorem_3_4}
    For the iteration sequence $\{u_n\}_{n=0}^\infty$ generated by the $a_0$ scheme, any weak limit point $u^*$ is a stationary point of the optimization problem \eqref{equation_2_6}, and $\{u_n\}_{n=0}^\infty$ has a subsequence that converges strongly to $u^*$ in $a_0$ norm.
\end{theorem}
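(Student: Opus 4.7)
The strategy mirrors the proof of Theorem \ref{theorem_3_3}, adapted to the $a_0$ inner product and relying on the norm equivalence in Lemma \ref{lemma_2_2}. First, I would combine the energy dissipation bound (Theorem \ref{theorem_3_2}(iii)) with a lower bound on $E$ to drive the Riemannian gradient to zero. Under Assumption \ref{assumption_2_4}, the coercivity of $E$ on $\mathcal{M}$ gives $E(u_n)\ge E_{\min}$ for all $n$, so telescoping yields $\sum_{n=0}^{\infty}\|\nabla^{\mathcal{R}}_{a_0}E(u_n)\|_{a_0}^2<\infty$, and in particular $\|\nabla^{\mathcal{R}}_{a_0}E(u_n)\|_{a_0}\to 0$. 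By Theorem \ref{theorem_3_2}(i) and Lemma \ref{lemma_2_2}, the sequence $\{u_n\}$ is bounded in $H_0^1(\mathcal{D})$, so we may extract a subsequence, still denoted $\{u_n\}$, with $u_n\rightharpoonup u^*$ weakly in $H_0^1$. The compact embedding $H_0^1(\mathcal{D})\hookrightarrow L^p(\mathcal{D})$ (for $p<\infty$ in $d=2$ and $p<6$ in $d=3$) yields strong convergence in $L^2$, hence $u^*\in\mathcal{M}$, and strong convergence in $L^6$, which gives $|u_n|^2 u_n\to |u^*|^2 u^*$ strongly in $L^2$.

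Next, I would upgrade weak to strong $a_0$-convergence by exploiting the explicit form of the Riemannian gradient. Writing
\begin{equation*}
    u_n \;=\; -\,\mathcal{G}_{a_0}(\beta |u_n|^2 u_n) \;+\; \lambda_n\,\mathcal{G}_{a_0} u_n \;+\; r_n,
    \qquad r_n := -\nabla^{\mathcal{R}}_{a_0}E(u_n),
\end{equation*}
with $\lambda_n:=\tfrac{1+\Re(u_n,\mathcal{G}_{a_0}(\beta|u_n|^2u_n))_{L^2}}{\|\mathcal{G}_{a_0}u_n\|_{a_0}^2}$, we know $\|r_n\|_{a_0}\to 0$. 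Since $\mathcal{G}_{a_0}\colon L^2(\mathcal{D})\to H_0^1(\mathcal{D})$ is compact (as the resolvent of the coercive operator $-\Delta+V-\Omega L_z$, coercive by Lemma \ref{lemma_2_2}), the strong $L^2$ convergence of $u_n$ and of $|u_n|^2u_n$ gives
\begin{equation*}
    \mathcal{G}_{a_0} u_n \to \mathcal{G}_{a_0} u^*, \qquad
    \mathcal{G}_{a_0}(\beta|u_n|^2 u_n) \to \mathcal{G}_{a_0}(\beta|u^*|^2 u^*)
    \quad \text{strongly in } a_0.
\end{equation*}
After verifying that $\|\mathcal{G}_{a_0}u_n\|_{a_0}$ stays bounded away from zero (it equals $\|u_n\|_{\mathcal{G}_{a_0}^{1/2}}^2\ge c\|u_n\|_{H^{-1}}^2$, bounded below by the reverse Poincaré-type bound $\|u_n\|_{H^{-1}}\ge c\|u_n\|_{L^2}=c$), the scalars $\lambda_n$ converge to $\lambda^*:=\tfrac{1+\Re(u^*,\mathcal{G}_{a_0}(\beta|u^*|^2u^*))_{L^2}}{\|\mathcal{G}_{a_0}u^*\|_{a_0}^2}$, and the right-hand side converges strongly in $a_0$. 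Uniqueness of weak limits identifies the strong limit with $u^*$, so $u_n\to u^*$ strongly in the $a_0$-norm.

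Finally, passing to the limit in
\begin{equation*}
    \Re(\nabla^{\mathcal{R}}_{a_0}E(u_n),v)_{a_0} \;=\; \langle E'(u_n),v\rangle - \lambda_n\,\Re(u_n,v)_{L^2}
\end{equation*}
for arbitrary $v\in H_0^1(\mathcal{D})$, using the strong $H_0^1$-convergence to handle the gradient, potential, and cubic terms, and strong $L^2$-convergence for the rotation term after an integration by parts on $L_z$, yields
\begin{equation*}
    \langle E'(u^*),v\rangle \;=\; \lambda^*\,\Re(u^*,v)_{L^2} \qquad \forall v\in H_0^1(\mathcal{D}),
\end{equation*}
so $u^*$ is a stationary point of \eqref{equation_2_6}.

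The main obstacle is the fourth-order term together with the rotational skew-symmetric term: without the compactness of $\mathcal{G}_{a_0}$, the identity above only controls $u_n$ weakly, and one could not close the gap between weak and strong convergence. The argument therefore hinges on recognizing $\mathcal{G}_{a_0}$ as a compact smoothing operator and using Sobolev embeddings to lift $L^2$-convergence of $|u_n|^2u_n$ to $a_0$-convergence of $\mathcal{G}_{a_0}(\beta|u_n|^2u_n)$; the bounded-below estimate for $\|\mathcal{G}_{a_0}u_n\|_{a_0}$, needed for the Lagrange-multiplier-like scalar $\lambda_n$, is the only quantitative ingredient that requires care.
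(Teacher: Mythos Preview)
Your approach is essentially the paper's: the paper proves Theorem~\ref{theorem_3_4} by mimicking the $H_0^1$ argument (Theorem~\ref{theorem_4_1} together with the proof of Theorem~\ref{theorem_3_3}), using energy dissipation to force $\|\nabla^{\mathcal{R}}_{a_0}E(u_n)\|_{a_0}\to 0$, extracting a subsequence via Rellich--Kondrachov, showing that $\mathcal{G}_{a_0}u_n$ and $\mathcal{G}_{a_0}(\beta|u_n|^2u_n)$ converge strongly, and passing to the limit in the scalar $\lambda_n$. The only organizational difference is that the paper first concludes $\nabla^{\mathcal{R}}_{a_0}E(u^*)=0$ and then upgrades weak to strong convergence by showing $\|u_n\|_{a_0}\to\|u^*\|_{a_0}$ via the identity $(\nabla^{\mathcal{R}}_{a_0}E(u_n),u_n)_{a_0}\to 0$, whereas you solve the gradient identity for $u_n$ and read off strong convergence directly; the ingredients are identical.

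One technical slip to fix: in $d=3$ the embedding $H_0^1\hookrightarrow L^6$ is continuous but \emph{not} compact (you correctly write $p<6$ two lines earlier), so weak $H_0^1$ convergence does not yield strong $L^6$ convergence of $u_n$, and hence not strong $L^2$ convergence of $|u_n|^2u_n$. The repair is exactly what the paper does in the $H_0^1$ case: use strong $L^4$ convergence of $u_n$, which gives $|u_n|^2u_n\to|u^*|^2u^*$ in $L^{4/3}\hookrightarrow H^{-1}$ by Lemma~\ref{lemma_2_1}, and then invoke $\mathcal{G}_{a_0}:H^{-1}\to H_0^1$ as a \emph{bounded} operator (it is an $a_0$-isometry) rather than as a compact operator out of $L^2$. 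With this change your argument goes through unchanged.
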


Next, we present the local convergence results for the three schemes, showing that all three converge linearly locally. Unlike the non-rotating case discussed in~\cite{chen2024convergence}, the ground state of the rotating Bose-Einstein energy functional is not unique, even in its neighborhood, because $E(u)=E(\exp(\mathrm{i}\omega)u)$ holds for any $u\in H_0^1(\mathcal{D})$ and $\omega\in [-\pi, \pi)$. However, as $|u|=|\exp(\mathrm{i}\omega)u|^2$, these states share the same density and are hence physically equivalent. Therefore, instead of pursuing uniqueness of the ground state, we assume that the ground state satisfies (locally) quasi-unique property. Simply put, we assume that in a small neighborhood of a ground state $u^*$, any other ground state differs from $u^*$ only by a phase factor. The rigorous definition of this property is as follows, and for more details, readers can refer to~\cites{henning2025discrete,henning2025convergence}.

\begin{definition}\label{definition_3_1}
    A ground state $u^*$ is called a (locally) quasi-unique ground state if for any smooth curve $\gamma: (-1, 1) \to \mathcal{M}$ satisfying $\gamma(0)=u^*$, $\gamma'(0) \in T_{u^*}\mathcal{M} \cap T_{\mathrm{i}u^*}\mathcal{M} \setminus \{0\}$, it holds
    \begin{equation*}
        \frac{d^2}{dt^2} E(\gamma(t)) \Big|_{t=0} > 0.
    \end{equation*}
\end{definition}
In the proof of local convergence, we need the following assumption.
\begin{assumption}[(locally) quasi-uniqueness assumption]\label{assumption_2_1}
    The ground state $u^*$ is a (locally) quasi-unique ground state.
\end{assumption}

At the same time, due to the lack of uniqueness of the ground state, the iteration sequence $\{u_n\}_{n=0}^\infty$ may not even converge to the desired ground state $u^*$. Therefore, we can no longer measure the convergence of the iteration sequence by conventional norms. To address this problem, we consider the equivalence class
\begin{equation*}
    [u] := \{\exp(\mathrm{i}\omega)u : \omega\in [-\pi, \pi)\},
\end{equation*}
which naturally leads to the quotient space and its quotient metric
\begin{equation*}
    H_0^1(\mathcal{D})/\sim := \{[u] : u\in H_0^1(\mathcal{D})\},\quad\quad d\left([u],[v]) := \inf(d(\tilde{u},\tilde{v}) : \tilde{u}\in [u], \tilde{v}\in [v]\right).
\end{equation*}
In particular, by choosing the original metric $d(\cdot, \cdot)$ to be $L^2$ and $H_0^1$ norms, we derive two quotient metrics required in this paper:
\begin{equation*}
    \rho_1(u, v) := \min_{\omega \in [-\pi,\pi)} \|u-e^{\mathrm{i}\omega}v\|_{L^2}, \quad\quad \rho_2(u, v) := \min_{\omega \in [-\pi,\pi)} \|u-e^{\mathrm{i}\omega}v\|_{H_0^1}.
\end{equation*}
For simplicity, for a (locally) quasi-unique ground state $u^*$, we denote
\begin{equation}\label{equation_3_1}
    \rho_1(u) := \rho_1(u, u^*), \quad\quad \rho_2(u) := \rho_2(u, u^*).
\end{equation}

In addition to the lack of uniqueness of the ground state, another difficulty arises from the absence of the following result: in the non-rotating case, if $\lambda$ is the eigenvalue corresponding to the ground state $u^*$ in the Gross--Pitaevskii eigenvalue equation \eqref{equation_2_7}, then $\lambda$ is also the smallest eigenvalue of its linearized eigenvalue problem~\cite{zhang2022exponential}. Specifically, if $\lambda$ is the eigenvalue of the eigenvalue problem
\begin{equation*}
    -\Delta u^* + Vu^* + \beta |u^*|^2 u^* = \lambda u^*,
\end{equation*}
corresponding to the ground state $u^*$ of the Gross-Pitaevskii energy functional, then $\lambda$ is also the smallest eigenvalue of the eigenvalue problem
\begin{equation*}
    -\Delta u + Vu + \beta |u^*|^2 u = \lambda u.
\end{equation*}
This property is used throughout the local convergence proof in~\cite{chen2024convergence}. However, this property does not hold for problems with a rotation term, and a counterexample is given in~\cite{henning2025convergence}. To address this issue, we choose to use $E''(u^*)$ to complete the convergence proof. For a (locally) quasi-unique ground state $u^*$, $E''(u^*)$ has the following important properties:

\begin{lemma}\label{lemma_3_1}
    Let $u^*$ be a (locally) quasi-unique ground state, and let $0 < \lambda_1 \le \lambda_2 \le \cdots$ be the eigenvalues of $E''(u^*)|_{T_{u^*}\mathcal{M}}$. Then $\lambda_2 > \lambda_1$, $\lambda_1 = \lambda$, and the corresponding eigenfunction for $\lambda_1$ is $\mathrm{i}u^*$. Furthermore, for any $v \in T_{u^*}\mathcal{M} \cap T_{\mathrm{i}u^*}\mathcal{M}$, it holds
    \begin{equation*}
        \langle E''(u^*)v, v \rangle - \lambda \|v\|_{L^2}^2 \ge \frac{K(\lambda_2-\lambda_1)}{2(1+K)\lambda_2} \|v\|_{H_0^1}^2.
    \end{equation*}
\end{lemma}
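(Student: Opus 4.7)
The plan is to combine phase invariance, the minimality of $u^*$, the quasi-uniqueness hypothesis, and a spectral decomposition of $E''(u^*)|_{T_{u^*}\mathcal{M}}$ to prove the four assertions (eigenfunction $\mathrm{i}u^*$, identity $\lambda_1=\lambda$, strict inequality $\lambda_2>\lambda_1$, and the quantitative bound).

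I would first use phase invariance: since $|e^{\mathrm{i}\omega}u^*|^2=|u^*|^2$ and $L_z(e^{\mathrm{i}\omega}u^*)=e^{\mathrm{i}\omega}L_zu^*$, the curve $u(\omega)=e^{\mathrm{i}\omega}u^*$ consists entirely of solutions of \eqref{equation_2_7} with the same eigenvalue $\lambda$. Differentiating $E'(u(\omega))=\lambda u(\omega)$ at $\omega=0$ yields $E''(u^*)\mathrm{i}u^*=\lambda\,\mathrm{i}u^*$ and, since $\Re(u^*,\mathrm{i}u^*)_{L^2}=0$, exhibits $\mathrm{i}u^*$ as an eigenfunction in $T_{u^*}\mathcal{M}$, giving $\lambda_1\le\lambda$. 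Conversely, for any $v\in T_{u^*}\mathcal{M}$ and a smooth constrained curve $\gamma$ with $\gamma(0)=u^*$, $\gamma'(0)=v$, the inequality $E(\gamma(t))\ge E(u^*)$ together with $\langle E'(u^*),\gamma''(0)\rangle=\lambda\Re(u^*,\gamma''(0))_{L^2}=-\lambda\|v\|_{L^2}^2$ (the last equality from twice differentiating $\|\gamma(t)\|_{L^2}^2=1$) gives $\langle E''(u^*)v,v\rangle\ge\lambda\|v\|_{L^2}^2$, so $\lambda_1=\lambda$. To upgrade $\lambda_2\ge\lambda_1$ to $\lambda_2>\lambda_1$, I argue by contradiction: if $\lambda_2=\lambda_1$, a second eigenfunction $v_2$ real-$L^2$-orthogonal to $\mathrm{i}u^*$ would automatically satisfy $\Im(u^*,v_2)_{L^2}=0$ and hence lie in $T_{u^*}\mathcal{M}\cap T_{\mathrm{i}u^*}\mathcal{M}\setminus\{0\}$; any curve through $u^*$ with initial velocity $v_2$ would then produce a vanishing second variation, contradicting Assumption \ref{assumption_2_1}.

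For the quantitative bound, the explicit expression
\[
\langle E''(u^*)v,v\rangle=\|v\|_{a_0}^2+\beta\int_{\mathcal{D}}|u^*|^2|v|^2\,dr+2\beta\int_{\mathcal{D}}(\Re(\bar{u^*}v))^2\,dr,
\]
combined with Lemma \ref{lemma_2_2}, yields $\langle E''(u^*)v,v\rangle\ge\tfrac{K}{1+K}\|v\|_{H_0^1}^2$. This $H_0^1$-coercivity together with the compact embedding $H_0^1\hookrightarrow L^2$ provides a discrete spectrum with $L^2$-orthonormal eigenbasis $\{v_k\}$ (with $v_1=\mathrm{i}u^*$). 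For $v\in T_{u^*}\mathcal{M}\cap T_{\mathrm{i}u^*}\mathcal{M}$, expanding $v=\sum_{k\ge 2}c_k v_k$ and splitting the resulting spectral sum in half,
\[
\langle E''(u^*)v,v\rangle-\lambda\|v\|_{L^2}^2=\sum_{k\ge 2}(\lambda_k-\lambda_1)|c_k|^2\ge\tfrac{1}{2}(\lambda_2-\lambda_1)\|v\|_{L^2}^2+\tfrac{\lambda_2-\lambda_1}{2\lambda_2}\langle E''(u^*)v,v\rangle,
\]
where the first estimate uses $\lambda_k\ge\lambda_2$ and the second uses $(\lambda_k-\lambda_1)/\lambda_k\ge(\lambda_2-\lambda_1)/\lambda_2$. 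Dropping the nonnegative $\|v\|_{L^2}^2$-term and inserting the coercivity bound closes the argument with the claimed constant $\tfrac{K(\lambda_2-\lambda_1)}{2(1+K)\lambda_2}$.

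The main technical obstacle is the careful bookkeeping of the real-versus-complex Hilbert space structure: $E''(u^*)$ is a real-symmetric form on the real Hilbert space underlying $H_0^1(\mathcal{D};\mathbb{C})$, $T_{u^*}\mathcal{M}$ has real codimension one but already contains $\mathrm{i}u^*$, and the intersection $T_{u^*}\mathcal{M}\cap T_{\mathrm{i}u^*}\mathcal{M}$ is precisely the set of $v$ that are complex $L^2$-orthogonal to $u^*$; this structure must be tracked when separating the $\lambda_1$-eigenspace from its complement in the spectral expansion. A secondary subtlety is the symmetric $\tfrac{1}{2}$-splitting of the spectral sum, which is chosen precisely so that both a $\|v\|_{L^2}^2$-term and an $\langle E''(u^*)v,v\rangle$-term remain available to invoke the $H_0^1$ coercivity.
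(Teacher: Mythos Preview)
Your argument is correct and follows essentially the same path the paper indicates (it defers to Lemma~2.3 of Henning--Yadav): phase invariance for the eigenpair $(\mathrm{i}u^*,\lambda)$, minimality for $\lambda_1\ge\lambda$, quasi-uniqueness for the strict gap, and a spectral expansion combined with the coercivity $\langle E''(u^*)v,v\rangle\ge\tfrac{K}{1+K}\|v\|_{H_0^1}^2$ for the quantitative estimate. The only difference is in the final algebra---the paper invokes $\min\{1,(\lambda_2-\lambda_1)/\lambda_1\}>(\lambda_2-\lambda_1)/\lambda_2$, whereas you use the monotonicity $(\lambda_k-\lambda_1)/\lambda_k\ge(\lambda_2-\lambda_1)/\lambda_2$ directly on the spectral sum; note that your $\tfrac12$-split is in fact unnecessary, since applying this monotonicity to the \emph{full} sum already yields the stated bound (indeed a factor $2$ better), so the ``secondary subtlety'' you flag is not actually needed.
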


The proof follows as in~\cite{henning2025convergence}*{Lemma 2.3} by exploiting that $\min \{ 1 , (\lambda_2 - \lambda_1) / \lambda_1 \} > (\lambda_2 - \lambda_1) / \lambda_2$. Finally, we present the local convergence theorems. 

\begin{theorem}[local linear convergence for the $H_0^1$ scheme]\label{theorem_3_5}
    If $\rho_2(u_0)$ and $\alpha_{\max}$ are sufficiently small, the sequence $\{u_n\}_{n=1}^{\infty}$ generated by the $H_0^1$ scheme converges linearly to the ground state in the sense of the quotient metric $\rho_2$.
\end{theorem}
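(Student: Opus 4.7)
The plan is to establish local linear convergence in the quotient metric $\rho_2$ by combining a two-sided quadratic equivalence between $E(u_n)-E(u^*)$ and $\rho_2(u_n)^2$ with a Polyak--\L{}ojasiewicz-type lower bound for $\|\nabla^{\mathcal{R}}_{H_0^1} E(u_n)\|_{H_0^1}^2$ in terms of the energy gap. Fed into the energy-dissipation inequality of Theorem \ref{theorem_3_1}(iii), these two ingredients force $E(u_n)-E(u^*)$ to decay geometrically, and the lower quadratic bound then translates this into linear decay of $\rho_2(u_n)$.

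For each $n$ I would fix the phase $\omega_n\in[-\pi,\pi)$ realizing $\rho_1(u_n)=\|u_n-\tilde u_n^*\|_{L^2}$ with $\tilde u_n^*:=e^{\mathrm{i}\omega_n}u^*$. By phase invariance of $E$, the representative $\tilde u_n^*$ is again a (locally) quasi-unique ground state and inherits the spectral gap of Lemma \ref{lemma_3_1}. The $L^2$-optimality of $\omega_n$ forces $\Im(u_n,\tilde u_n^*)_{L^2}=0$, so that together with the mass constraint we obtain the $L^2$-orthogonal decomposition
\begin{equation*}
e_n := u_n - \tilde u_n^* = v_n + s_n \tilde u_n^*, \qquad v_n \in T_{\tilde u_n^*}\mathcal{M}\cap T_{\mathrm{i}\tilde u_n^*}\mathcal{M}, \qquad s_n = -\tfrac{1}{2}\|e_n\|_{L^2}^2,
\end{equation*}
in which the vertical component is quadratically small. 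Taylor-expanding $E(u_n)$ around $\tilde u_n^*$ and using \eqref{equation_2_1} to obtain $\langle E'(\tilde u_n^*),e_n\rangle=-\tfrac{\lambda}{2}\|e_n\|_{L^2}^2$, while absorbing every $s_n$-contribution into the remainder, yields
\begin{equation*}
E(u_n) - E(u^*) = \tfrac{1}{2}\bigl(\langle E''(\tilde u_n^*)v_n,v_n\rangle - \lambda\|v_n\|_{L^2}^2\bigr) + O(\|e_n\|_{H_0^1}^3).
\end{equation*}
Applying Lemma \ref{lemma_3_1} on the horizontal subspace, together with $\|v_n\|_{H_0^1}^2 = \|e_n\|_{H_0^1}^2 + O(\|e_n\|_{H_0^1}^3)$ and $\rho_2(u_n)\le\|e_n\|_{H_0^1}$, gives the lower bound $E(u_n)-E(u^*)\ge c_1\rho_2(u_n)^2$; the matching upper bound $E(u_n)-E(u^*)\le C_1\rho_2(u_n)^2$ follows from the same expansion around the $H_0^1$-phase-minimizer together with boundedness of $E''$.

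For the Polyak--\L{}ojasiewicz-type estimate $\|\nabla^{\mathcal{R}}_{H_0^1}E(u_n)\|_{H_0^1}^2 \ge c_2(E(u_n)-E(u^*))$, I would use the Riesz duality
\begin{equation*}
\|\nabla^{\mathcal{R}}_{H_0^1}E(u_n)\|_{H_0^1} = \sup_{w\in T_{u_n}\mathcal{M},\,\|w\|_{H_0^1}=1}\Re\langle E'(u_n),w\rangle
\end{equation*}
and test against the explicit direction $\tilde w_n := v_n - \|v_n\|_{L^2}^2\,u_n$, which lies in $T_{u_n}\mathcal{M}$ by construction and satisfies $\|\tilde w_n\|_{H_0^1} \le (1+O(\|e_n\|_{H_0^1}))\|v_n\|_{H_0^1}$. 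Taylor-expanding $\langle E'(u_n),\tilde w_n\rangle = \langle E'(\tilde u_n^*),\tilde w_n\rangle + \langle E''(\tilde u_n^*)e_n,\tilde w_n\rangle + O(\|e_n\|_{H_0^1}^2\|\tilde w_n\|_{H_0^1})$, using \eqref{equation_2_1} to simplify the first term and reducing the Hessian term modulo higher-order remainders to $\langle E''(\tilde u_n^*)v_n,v_n\rangle - \lambda\|v_n\|_{L^2}^2$, the coercivity supplied by Lemma \ref{lemma_3_1} then yields $\|\nabla^{\mathcal{R}}_{H_0^1}E(u_n)\|_{H_0^1} \ge c_2'\rho_2(u_n)$. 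Combining with Theorem \ref{theorem_3_1}(iii) gives $E(u_{n+1})-E(u^*)\le(1-\tfrac{\alpha_{\min}}{2}c_2)(E(u_n)-E(u^*))$ for $\alpha_{\max}$ sufficiently small, and a standard induction using the upper quadratic bound keeps every iterate inside the neighborhood where the local estimates are valid.

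The main obstacle is the Polyak--\L{}ojasiewicz step. In contrast to the non-rotating analysis of \cite{chen2024convergence}, the eigenvalue $\lambda$ is no longer the smallest eigenvalue of the linearized operator, so $E''(\tilde u_n^*) - \lambda I$ fails to be coercive on all of $T_{\tilde u_n^*}\mathcal{M}$ and is coercive only on the horizontal subspace $T_{\tilde u_n^*}\mathcal{M}\cap T_{\mathrm{i}\tilde u_n^*}\mathcal{M}$ supplied by Lemma \ref{lemma_3_1}. Making this restricted coercivity sufficient relies on the phase-minimizing choice of $\omega_n$, which makes the vertical component $s_n\tilde u_n^*$ of $e_n$ quadratically small, and on a careful correction of the test direction to land in $T_{u_n}\mathcal{M}$ rather than $T_{\tilde u_n^*}\mathcal{M}$; this is precisely where the quotient-metric framework inspired by \cite{henning2025convergence} is indispensable.
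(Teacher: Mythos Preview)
Your approach is correct but takes a genuinely different route from the paper. You argue via a Polyak--\L{}ojasiewicz inequality: establish a two-sided equivalence $c_1\rho_2(u_n)^2 \le E(u_n)-E(u^*) \le C_1\rho_2(u_n)^2$ (the lower half being Lemma~\ref{lemma_5_1}), prove $\|\nabla^{\mathcal{R}}_{H_0^1}E(u_n)\|_{H_0^1}^2 \gtrsim E(u_n)-E(u^*)$ by testing against a corrected horizontal direction in $T_{u_n}\mathcal{M}$, and then feed this into the energy-dissipation estimate of Theorem~\ref{theorem_3_1}(iii) to obtain geometric decay of the energy gap, hence of $\rho_2$. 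The paper instead proves contraction of $\rho_2$ directly: choosing the $H_0^1$-optimal phase so that $\delta_n=\|u_n-u_n^*\|_{H_0^1}$, it expands $\|(u_n-u_n^*)-\alpha_n\nabla^{\mathcal{R}}_{H_0^1}E(u_n)\|_{H_0^1}^2$, combines a Lipschitz bound on the Riemannian gradient (Lemma~\ref{lemma_5_2}) with a Taylor expansion of $E(u_n^*)-E(u_n)$ and the upper bound from Lemma~\ref{lemma_5_1} to control the cross term $\Re(e_n,\nabla_{H_0^1}E(u_n))_{H_0^1}$, and finally adds the retraction error to obtain $\delta_{n+1}\le\theta\delta_n$. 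In particular, the paper never isolates a PL inequality; it explicitly distinguishes its direct energy-method argument from the PL route (attributed to Feng--Tang~\cite{FengTang2025}) in the introduction. Both proofs rest on the same horizontal coercivity supplied by Lemma~\ref{lemma_3_1} and on the phase-aligned decomposition that makes the vertical component quadratically small; your approach is more modular and closer to the standard Riemannian-optimization template, while the paper's direct contraction yields more explicit step-size and neighborhood constants and tracks the structure of~\cite{chen2024convergence} more closely. One minor point to tighten in your write-up: since you work with the $L^2$-optimal phase, relating $\|e_n\|_{H_0^1}$ back to $\rho_2(u_n)$ (and closing the induction that keeps iterates in the small-$\rho_2$ neighborhood) requires the easy observation that the $L^2$- and $H_0^1$-optimal phases differ by $O(\rho_2(u_n))$, which you should make explicit.
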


\begin{theorem}[local linear convergence for the $a_0$ scheme]\label{theorem_3_6}
    If $\rho_2(u_0)$ and $\alpha_{\max}$ are sufficiently small, the sequence $\{u_n\}_{n=1}^{\infty}$ generated by the $a_0$ scheme converges linearly to the ground state in the sense of the quotient metric $\rho_2$.
\end{theorem}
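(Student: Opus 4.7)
The plan is to follow the strategy used for the $H_0^1$ scheme in Theorem \ref{theorem_3_5}, modified only where the $a_0$-metric differs from $H_0^1$. Convergence in $\rho_2$ will be obtained by combining a local Polyak--\L{}ojasiewicz-type inequality with the energy dissipation from Theorem \ref{theorem_3_2}(iii). Because the scheme is phase-equivariant, $\nabla^{\mathcal{R}}_{a_0} E(e^{\mathrm{i}\omega} u) = e^{\mathrm{i}\omega}\nabla^{\mathcal{R}}_{a_0} E(u)$, I may replace $u_n$ by its best $H_0^1$-aligned representative $\tilde{u}_n := e^{-\mathrm{i}\omega_n} u_n$ with $\omega_n \in \argmin_{\omega \in [-\pi,\pi)} \|u_n - e^{\mathrm{i}\omega} u^*\|_{H_0^1}$, so that $\|\tilde u_n - u^*\|_{H_0^1} = \rho_2(u_n)$ and the error $v_n := \tilde{u}_n - u^*$ is real-$H_0^1$-orthogonal to $\mathrm{i}u^*$. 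Together with the mass identity $2\Re(u^*, v_n)_{L^2} + \|v_n\|_{L^2}^2 = 0$, this places $v_n$ inside $T_{u^*}\mathcal{M} \cap T_{\mathrm{i}u^*}\mathcal{M}$ up to terms of order $\|v_n\|_{H_0^1}^2$.

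Taylor-expanding $E$ around $u^*$, using $\langle E'(u^*),\cdot\rangle = \lambda\,\Re(u^*,\cdot)_{L^2}$ and the quartic structure of $E$, gives
\begin{equation*}
E(u_n) - E(u^*) \;=\; \tfrac{1}{2}\bigl[\langle E''(u^*) v_n, v_n\rangle - \lambda\|v_n\|_{L^2}^2\bigr] + O(\|v_n\|_{H_0^1}^3).
\end{equation*}
Lemma \ref{lemma_3_1} applied to the admissible component of $v_n$ then yields, for $\rho_2(u_n)$ small, the two-sided quadratic bound $c_1\,\rho_2(u_n)^2 \le E(u_n) - E(u^*) \le C_1\,\rho_2(u_n)^2$. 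The next step is the PL-type inequality $\|\nabla^{\mathcal{R}}_{a_0} E(u_n)\|_{a_0}^2 \ge c_2\,(E(u_n) - E(u^*))$. Since $\nabla_{a_0} E(u^*) = \lambda \mathcal{G}_{a_0} u^*$, the $a_0$-tangent projection annihilates the order-zero piece at $u^*$, and a direct linearization identifies the leading part of $\nabla^{\mathcal{R}}_{a_0} E(u_n)$ with $\mathcal{P}_{u^*, a_0}\mathcal{G}_{a_0}(E''(u^*) - \lambda\,\mathrm{Id})v_n$ up to an $O(\|v_n\|_{H_0^1}^2)$ remainder. Because $\mathcal{G}_{a_0}:H^{-1} \to H_0^1$ is an isomorphism and the $a_0$- and $H_0^1$-norms are equivalent by Lemma \ref{lemma_2_2}, the Hessian lower bound from the previous step transfers to the required estimate on $\|\nabla^{\mathcal{R}}_{a_0} E(u_n)\|_{a_0}$.

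Combining the two bounds with Theorem \ref{theorem_3_2}(iii) closes the argument: $E(u_n) - E(u_{n+1}) \ge \tfrac{\alpha_{\min}}{2}\|\nabla^{\mathcal{R}}_{a_0} E(u_n)\|_{a_0}^2 \ge \tfrac{\alpha_{\min} c_2}{2}(E(u_n) - E(u^*))$, so setting $r := 1 - \alpha_{\min} c_2/2 \in (0,1)$ yields $E(u_{n+1}) - E(u^*) \le r\,(E(u_n) - E(u^*))$. Inverting the quadratic lower bound gives $\rho_2(u_n) \le C r^{n/2}$. A routine induction using $E(u_{n+1}) \le E(u_n)$ together with the matching upper bound ensures every iterate remains inside the neighborhood where the previous steps are valid, provided $\rho_2(u_0)$ and $\alpha_{\max}$ are initially small.

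The principal obstacle is the PL step: because $E''(u^*)$ is degenerate along $\mathrm{i}u^*$, coercivity is only available on $T_{u^*}\mathcal{M}\cap T_{\mathrm{i}u^*}\mathcal{M}$. Phase alignment removes the degenerate component of $v_n$, but the alignment is in $H_0^1$ while $T_{\mathrm{i}u^*}\mathcal{M}$ is defined by $L^2$-orthogonality; reconciling the two requires treating the mismatch as a higher-order perturbation, and this must be done carefully enough to preserve the linear contraction rate. The second technical point is that the gradient is measured in $a_0$ while the coercivity is naturally in $H_0^1$, so the norm equivalence of Lemma \ref{lemma_2_2} must be invoked at the right places without losing the constants $c_1, c_2$ that drive the rate.
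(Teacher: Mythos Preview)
Your approach is correct in spirit but takes a genuinely different route from the paper. The paper does not use a Polyak--\L{}ojasiewicz inequality at all; instead, it mirrors the proof of Theorem~\ref{theorem_3_5} verbatim with $a_0$ in place of $H_0^1$. That argument works by expanding
\[
\|(u_n-u_n^*) - \alpha_n \nabla^{\mathcal{R}}_{a_0} E(u_n)\|_{a_0}^2
\]
directly, where $u_n^*=e^{\mathrm{i}\omega_n}u^*$ is the best-aligned ground state. The cross term $2\alpha_n\Re(e_n,\nabla_{a_0}E(u_n))_{a_0}$ is bounded via a second-order Taylor expansion of $E(u_n^*)-E(u_n)$ combined with the lower bound of Lemma~\ref{lemma_5_1}, the gradient square is controlled by an $a_0$-analog of the Lipschitz Lemma~\ref{lemma_5_2}, the difference between projected and unprojected gradients is computed exactly, and finally the retraction error is absorbed. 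This produces $\delta_{n+1}\le\theta\,\delta_n$ in one step, so the invariance of the neighborhood comes for free from the contraction.

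Your PL-based route is the one the paper explicitly attributes to Feng--Tang and deliberately contrasts with its own energy-method proof. What it buys you is conceptual cleanliness: energy dissipation plus a gradient lower bound is a very standard template. What the paper's direct contraction buys is that the induction closes without a separate ``stay in the ball'' step, and it avoids having to establish the PL inequality, which is the piece you correctly flag as the principal obstacle. Your sketch of that step (``the Hessian lower bound from the previous step transfers'') is too quick: Lemma~\ref{lemma_3_1} gives coercivity of the bilinear form $\langle(E''(u^*)-\lambda)\cdot,\cdot\rangle$ on the admissible subspace, but the PL inequality requires a lower bound on $\|\mathcal{P}_{u^*,a_0}\mathcal{R}_{a_0}(E''(u^*)-\lambda)v\|_{a_0}$, which additionally needs the boundedness of the Hessian and control of what the projection removes. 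Similarly, the ``routine induction'' for neighborhood invariance actually needs the one-step bound $\rho_2(u_{n+1})\le(1+C\alpha_{\max})\rho_2(u_n)$ coming from the Lipschitz estimate, since energy monotonicity alone cannot prevent the iterate from jumping to a different low-energy region before the lower quadratic bound is known to apply.
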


\begin{theorem}[local linear convergence for the $a_u$ scheme]\label{theorem_3_7}
    If $\rho_2(u_0)$ and $\alpha_{\max}$ are sufficiently small, the sequence $\{u_n\}_{n=1}^{\infty}$ generated by the $a_u$ scheme converges linearly to the ground state in the sense of the quotient metric $\rho_2$.
\end{theorem}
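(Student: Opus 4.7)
The plan is to combine the quotient-metric framework of henning2025convergence with the energy-descent methodology of chen2024convergence, adapted to the $a_u$-metric. First, the $U(1)$-equivariance of the scheme is exploited: since $\mathcal{G}_{a_u}$ depends on $u$ only through $|u|^2$, rotating $u_n$ by $e^{\mathrm{i}\omega}$ rotates $u_{n+1}$ by the same phase, so we may replace each iterate by its phase-aligned representative and assume $\rho_2(u_n) = \|u_n-u^*\|_{H_0^1}$. Setting $v_n := u_n - u^*$, the first-order optimality in the phase gives $\Re(v_n, \mathrm{i}u^*)_{H_0^1} = 0$, while the mass constraint gives $\Re(v_n, u^*)_{L^2} = -\tfrac{1}{2}\|v_n\|_{L^2}^2$. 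Using the eigenvalue equation \eqref{equation_2_7} for $u^*$ to convert the $H_0^1$-orthogonality into an $L^2$-type condition modulo quadratic error, one decomposes $v_n = w_n + c_n \mathrm{i} u^* + r_n$ with $w_n \in T_{u^*}\mathcal{M}\cap T_{\mathrm{i}u^*}\mathcal{M}$ and $|c_n| + \|r_n\|_{H_0^1} \lesssim \|v_n\|_{H_0^1}^2$.

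Taylor expanding $E$ around $u^*$, the identity $\langle E'(u^*),v\rangle = \lambda\Re(u^*,v)_{L^2}$ together with the quadratic smallness of $\Re(u^*,v_n)_{L^2}$ pushes the linear term to cubic order, so Lemma \ref{lemma_3_1} applied to $w_n$ produces the two-sided bound $c_1 \rho_2(u_n)^2 \le E(u_n)-E(u^*) \le c_2 \rho_2(u_n)^2$ for $\rho_2(u_n)$ small. The heart of the proof is then a Polyak--\L{}ojasiewicz-type lower bound
\begin{equation*}
\|\nabla_{a_u}^{\mathcal{R}} E(u_n)\|_{a_u}^2 \;\ge\; c_3 \bigl(E(u_n) - E(u^*)\bigr),
\end{equation*}
valid in a small $\rho_2$-neighborhood of $u^*$. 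Using the explicit form $\nabla_{a_u}^{\mathcal{R}} E(u_n) = u_n - \mathcal{G}_{a_u} u_n/\|\mathcal{G}_{a_u} u_n\|_{a_u}^2$, linearizing both $\mathcal{G}_{a_u}$ and its Rayleigh-quotient normalization around $u^*$ identifies the leading contribution as a rescaled representation of the form $\langle E''(u^*)w_n, w_n\rangle - \lambda \|w_n\|_{L^2}^2$, which is coercive on $T_{u^*}\mathcal{M}\cap T_{\mathrm{i}u^*}\mathcal{M}$ by Lemma \ref{lemma_3_1}. Combined with the $a_u$ energy-dissipation estimate of henning2025convergence, this yields $E(u_{n+1})-E(u^*) \le (1 - c_3 \alpha_{\min})(E(u_n)-E(u^*))$, and the two-sided bound converts this into linear contraction of $\rho_2(u_n)$ with rate $\sqrt{1-c_3\alpha_{\min}}$.

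The main obstacle is the PL estimate. Because the inner product $(\cdot,\cdot)_{a_u}$ itself depends on $|u_n|^2$, the linearizations of $\mathcal{G}_{a_u}$ and of its normalization factor must be carried out simultaneously and controlled consistently; moreover, the dangerous $\mathrm{i}u^*$-direction, corresponding to the trivial eigenvalue $\lambda_1 = \lambda$ of $E''(u^*)|_{T_{u^*}\mathcal{M}}$, must be exactly the direction annihilated by the $H_0^1$-orthogonality enforced through the $\rho_2$-minimization. Smallness of $\rho_2(u_0)$ and $\alpha_{\max}$ is then used to absorb the cubic Taylor remainders, the quadratic correction coming from the $L^2$-constraint, and the nonlinear error in the retraction $R$ into the leading quadratic term, completing the contraction argument.
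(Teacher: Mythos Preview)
Your approach is genuinely different from the paper's. The paper does \emph{not} go through a Polyak--\L{}ojasiewicz inequality. Instead it introduces two families of phase-aligned errors, $\tilde\delta_n = \min_\omega \|u_n - e^{\mathrm{i}\omega}u^*\|_{a_{u_n}}$ and $\delta_n = \min_\omega \|u_n - e^{\mathrm{i}\omega}u^*\|_{a_{u^*}}$, and shows they are comparable up to a factor $1+O(\delta_n)$ via the elementary bound $\bigl|\|v\|_{a_{u_n}}^2-\|v\|_{a_{u^*}}^2\bigr| \le C\tilde\delta_n\|v\|_{a_{u_n}}^2$. It then invokes Remark~\ref{remark_5_1}, i.e., a Lipschitz estimate $\|\nabla_{a_u}^{\mathcal{R}}E(u)\|_{a_{u^*}} \le L\|u-u^*\|_{a_{u^*}}$, and reruns the direct norm-contraction argument of Theorem~\ref{theorem_3_5} in the \emph{fixed} $a_{u^*}$ metric to get $\tilde\delta_{n+1}\le\theta_1\tilde\delta_n$. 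The variable metric is handled entirely by sandwiching, not by linearizing $\mathcal{G}_{a_u}$. Your PL route is closer in spirit to~\cite{FengTang2025}, which the paper explicitly distinguishes from its own energy-method argument; it would in principle yield more explicit rates, whereas the paper's route is shorter because it recycles the $H_0^1$ proof verbatim once the metric is frozen.

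There is, however, a concrete gap in your decomposition step. You claim that the eigenvalue equation converts the $H_0^1$-orthogonality $\Re(v_n,\mathrm{i}u^*)_{H_0^1}=0$ into an $L^2$-type condition modulo quadratic error, giving $|c_n|\lesssim\|v_n\|_{H_0^1}^2$ for the $\mathrm{i}u^*$-coefficient in the $L^2$-decomposition. But the eigenvalue equation yields $(\mathrm{i}u^*,v)_{a_{u^*}}=\lambda(\mathrm{i}u^*,v)_{L^2}$, not $(\mathrm{i}u^*,v)_{H_0^1}=\lambda(\mathrm{i}u^*,v)_{L^2}$; the discrepancy $\Re((V+\beta|u^*|^2)\mathrm{i}u^*,v_n)_{L^2}-\Omega\,\Re(\mathrm{i}u^*,L_zv_n)_{L^2}$ is \emph{linear} in $v_n$, not quadratic. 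Consequently the $\mathrm{i}u^*$-component (in the $L^2$ sense needed for Lemma~\ref{lemma_3_1}) is not quadratically small under $\rho_2$-alignment. The paper circumvents this in Lemma~\ref{lemma_5_1} by aligning in $L^2$ (i.e.\ via $\rho_1$) for the decomposition step, which gives $\Re(v_n,\mathrm{i}u^*)_{L^2}=0$ exactly, and only passes to $\rho_2$ at the end via $\|\varphi\|_{H_0^1}\ge\rho_2(u)$. Your PL step is also only sketched at the level where the real difficulty lies: the simultaneous linearization of $\mathcal{G}_{a_u}$ and its normalization, with the metric itself moving, is precisely what the paper avoids by the sandwiching argument.
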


\begin{remark}
    (1) Theorem \ref{theorem_3_5}-\ref{theorem_3_7} simply assume that $\rho_2(u_0)$ and $\alpha_{\max}$ are sufficiently small. In fact, we can provide upper bounds for $\rho_2(u_0)$ and $\alpha_{\max}$, which will be given in the proofs. \\
    (2) Ostrowski's theorem is used in~\cite{henning2025convergence} to prove the local linear convergence for the $a_u$ scheme with constant step size. Theorem \ref{theorem_3_7} extends this to the variable step size case, providing a theoretical foundation for adaptive step size algorithms.
\end{remark}

\section{Proof of Global Convergence}\label{section_4}

In this section, we prove Theorem \ref{theorem_3_1} and Theorem \ref{theorem_3_3} using the method developed in~\cite{chen2024convergence}. Theorems \ref{theorem_3_2} and \ref{theorem_3_4} can be proven similarly. To establish global convergence, we require the following three lemmas. 

\begin{lemma}\label{lemma_4_1}
    For any $u \in L^2(\mathcal{D})$, the following statements hold:
    \begin{equation*}
        \|\mathcal{G}_{H_0^1} u\|_{H_0^1} \le \|u\|_{H^{-1}}, \quad\quad \|\mathcal{G}_{H_0^1} u\|_{H_0^1} \le C_3 \|u\|_{L^2}.
    \end{equation*}
\end{lemma}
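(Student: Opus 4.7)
The plan is to exploit the defining relation of the operator $\mathcal{G}_{H_0^1}\in L(H^{-1}(\mathcal{D}), H_0^1(\mathcal{D}))$ that was introduced in Section~\ref{section_2}: for every $w\in H^{-1}(\mathcal{D})$,
\begin{equation*}
    (\mathcal{G}_{H_0^1} w, h)_{H_0^1} = \langle w, h\rangle \quad \forall h\in H_0^1(\mathcal{D}),
\end{equation*}
where $\langle\cdot,\cdot\rangle$ denotes the $H^{-1}$--$H_0^1$ duality pairing (which coincides with $(w,h)_{L^2}$ whenever $w\in L^2(\mathcal{D})\hookrightarrow H^{-1}(\mathcal{D})$). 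Since $u\in L^2(\mathcal{D})\subset H^{-1}(\mathcal{D})$, $\mathcal{G}_{H_0^1}u$ is a well-defined element of $H_0^1(\mathcal{D})$ and both quantities on the right-hand side of the stated inequalities are finite.

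For the first inequality, the natural idea is to choose the admissible test function $h=\mathcal{G}_{H_0^1}u\in H_0^1(\mathcal{D})$ in the defining relation above, which gives
\begin{equation*}
    \|\mathcal{G}_{H_0^1} u\|_{H_0^1}^2 = \langle u, \mathcal{G}_{H_0^1}u\rangle \le \|u\|_{H^{-1}}\,\|\mathcal{G}_{H_0^1}u\|_{H_0^1},
\end{equation*}
by the very definition of the dual norm. Dividing by $\|\mathcal{G}_{H_0^1}u\|_{H_0^1}$ (the case $\mathcal{G}_{H_0^1}u=0$ being trivial) yields the first bound.

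For the second inequality, it suffices to upgrade the first estimate using the embedding $L^2(\mathcal{D})\hookrightarrow H^{-1}(\mathcal{D})$ with an explicit constant. For arbitrary $h\in H_0^1(\mathcal{D})$, Cauchy--Schwarz combined with Poincaré's inequality (the third estimate in Lemma~\ref{lemma_2_1}) yields
\begin{equation*}
    |\langle u, h\rangle| = |(u,h)_{L^2}| \le \|u\|_{L^2}\,\|h\|_{L^2} \le C_3\,\|u\|_{L^2}\,\|h\|_{H_0^1}.
\end{equation*}
Taking the supremum over $h\in H_0^1(\mathcal{D})\setminus\{0\}$ in the definition of $\|u\|_{H^{-1}}$ gives $\|u\|_{H^{-1}}\le C_3\|u\|_{L^2}$, which combined with the first inequality produces the second bound.

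There is no genuine obstacle here: the entire argument reduces to testing the defining identity of $\mathcal{G}_{H_0^1}$ against itself and to a direct use of Poincaré's inequality to compare the $H^{-1}$ and $L^2$ norms. The lemma will be used repeatedly in subsequent sections to control Riesz representatives of lower-order terms in $\nabla_{H_0^1}E(u)$ by their $L^2$ norms.
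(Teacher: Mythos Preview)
Your argument is correct and is precisely the standard Riesz-representation/duality argument one expects here; the paper itself does not supply a proof but simply refers to~\cite{chen2024convergence}, whose argument is the same test-with-$h=\mathcal{G}_{H_0^1}u$ plus Poincar\'e step you give. There is nothing to add.
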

The proof of Lemma \ref{lemma_4_1} can be referred to~\cite{chen2024convergence}. 

\begin{lemma}\label{lemma_4_2}
    For any $u \in \mathcal{M}$, it holds that
    \begin{equation*}
        \left\|\nabla^{\mathcal{R}}_{H_0^1} E(u)\right\|_{H_0^1} \le \|\nabla_{H_0^1} E(u)\|_{H_0^1} \le \beta C_1^3 C_2 \|u\|_{H_0^1}^3 + (1 + \Omega M C_3) \|u\|_{H_0^1} + C_3 V_{\max}.
    \end{equation*}
\end{lemma}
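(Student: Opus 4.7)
The plan is to establish the two inequalities separately. The first inequality $\|\nabla^{\mathcal{R}}_{H_0^1} E(u)\|_{H_0^1} \le \|\nabla_{H_0^1} E(u)\|_{H_0^1}$ follows immediately from the fact that $\nabla^{\mathcal{R}}_{H_0^1} E(u) = \mathcal{P}_{u,H_0^1}(\nabla_{H_0^1} E(u))$ is the $H_0^1$-orthogonal projection onto $T_u\mathcal{M}$, and orthogonal projections are contractions in the underlying norm.

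For the main bound, I would apply the triangle inequality to the explicit formula
\begin{equation*}
   \nabla_{H_0^1} E(u) = u + \mathcal{G}_{H_0^1}(Vu + \beta|u|^2 u - \Omega L_z u),
\end{equation*}
separating the contribution into four pieces: $u$, $\mathcal{G}_{H_0^1}(Vu)$, $\mathcal{G}_{H_0^1}(\beta|u|^2 u)$, and $\mathcal{G}_{H_0^1}(\Omega L_z u)$. The term $\|u\|_{H_0^1}$ contributes directly. For $\mathcal{G}_{H_0^1}(Vu)$, I would use the second bound of Lemma \ref{lemma_4_1} together with $\|Vu\|_{L^2} \le V_{\max}\|u\|_{L^2} = V_{\max}$ (using the mass constraint $u\in\mathcal{M}$) to get $C_3 V_{\max}$. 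For the rotation term, I would estimate $\|L_z u\|_{L^2} \le M \|\nabla u\|_{L^2} = M\|u\|_{H_0^1}$ using the pointwise bound $|r|\le M$ on $\mathcal{D}$, and apply Lemma \ref{lemma_4_1} to obtain $\Omega M C_3\|u\|_{H_0^1}$.

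The cubic nonlinearity is the only term that requires a bit of care: here I would use the first inequality of Lemma \ref{lemma_4_1}, $\|\mathcal{G}_{H_0^1}(\beta|u|^2 u)\|_{H_0^1} \le C_2\|\beta|u|^2 u\|_{L^{4/3}}$, together with the identity $\||u|^2 u\|_{L^{4/3}} = \|u\|_{L^4}^3$ and the Sobolev embedding $\|u\|_{L^4}\le C_1\|u\|_{H_0^1}$ from Lemma \ref{lemma_2_1}, yielding $\beta C_1^3 C_2\|u\|_{H_0^1}^3$. Summing the four contributions gives exactly the stated upper bound. None of the steps present a genuine obstacle; the only subtle choice is using the $H^{-1}$ bound from Lemma \ref{lemma_4_1} rather than the $L^2$ bound for the nonlinear term, since in three dimensions $|u|^2 u$ need not lie in $L^2$ for $u\in H_0^1$, whereas $L^{4/3}\hookrightarrow H^{-1}$ is always available.
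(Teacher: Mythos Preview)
Your proposal is correct and follows essentially the same approach as the paper: the paper cites \cite{chen2024convergence}*{Lemma 4.2} for the non-rotation terms and then handles the rotation term explicitly via Lemma~\ref{lemma_4_1} and the pointwise bound $|r|\le M$, which is exactly what you spell out term by term. Your closing remark on why the $H^{-1}$ estimate (rather than the $L^2$ estimate) is needed for the cubic term is a helpful clarification not made explicit in the paper.
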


\begin{proof}
    Similar to~\cite{chen2024convergence}*{Lemma 4.2}, we can obtain the estimation for terms other than the rotation term:
    \begin{equation*}
        \left\|\nabla^{\mathcal{R}}_{H_0^1} E(u)\right\|_{H_0^1} \le \|\nabla_{H_0^1} E(u)\|_{H_0^1} \le \beta C_1^3 C_2 \|u\|_{H_0^1}^3 + \|u\|_{H_0^1} + C_3 V_{\max} + \Omega \|\mathcal{G}_{H_0^1}(L_z u)\|_{H_0^1}.
    \end{equation*}
    For the rotation term, by Lemma \ref{lemma_4_1}, we have
    \begin{equation*}
        \|\mathcal{G}_{H_0^1}(L_z u)\|_{H_0^1} \le C_3\|L_z u\|_{L^2} \le MC_3\|u\|_{H_0^1}. 
    \end{equation*}
    Combining these two inequalities completes the proof of the lemma. 
\end{proof}

\begin{lemma}\label{lemma_4_4}
    For any $u, v \in H_0^1(\mathcal{D})$, it holds that
    \begin{align*}
        &|E(u+v) - E(u) - \Re(\nabla_{H_0^1} E(u), v)_{H_0^1}| \\
        \le &\frac{1}{2}(1+\Omega M C_3 + V_{\max} C_3^2) \|v\|_{H_0^1}^2 + \frac{3\beta}{2} C_1^4 \|u\|_{H_0^1}^2 \|v\|_{H_0^1}^2 + \beta C_1^4\|u\|_{H_0^1}\|v\|_{H_0^1}^3 + \frac{\beta}{4} C_1^4 \|v\|_{H_0^1}^4.
    \end{align*}
\end{lemma}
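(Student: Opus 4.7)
The plan is to perform a direct Taylor-style expansion of $E(u+v)$ around $u$, identify the linear term as $\Re(\nabla_{H_0^1}E(u),v)_{H_0^1}$ (which equals $\langle E'(u),v\rangle$ by the defining equation \eqref{equation_2_5}), and then estimate the higher-order remainder term by term using Lemma~\ref{lemma_2_1}. I would split the remainder into four contributions corresponding to the kinetic, potential, interaction, and rotation parts of $E$, since these have essentially independent structure.

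First, for the kinetic and potential parts the expansions are immediate:
\begin{equation*}
\tfrac12\|\nabla(u+v)\|_{L^2}^2 - \tfrac12\|\nabla u\|_{L^2}^2 = \Re(\nabla u,\nabla v)_{L^2} + \tfrac12\|\nabla v\|_{L^2}^2,
\end{equation*}
and analogously for the $V$-term the quadratic remainder is $\tfrac12\int V|v|^2$, which I bound by $\tfrac12 V_{\max}C_3^2\|v\|_{H_0^1}^2$ via Poincaré. For the rotation term I use that $L_z$ is symmetric in $(\cdot,\cdot)_{L^2}$ on $H_0^1(\mathcal{D})$, so expanding $\overline{(u+v)}L_z(u+v)$ gives the linear contribution $-\Omega\,\Re(L_z u,v)_{L^2}$ plus the quadratic remainder $-\tfrac{\Omega}{2}\int \overline v L_z v$. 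The latter is bounded using the pointwise inequality $|L_z v|\le M|\nabla v|$ combined with Poincaré: $|\int \overline v L_z v|\le M C_3\|v\|_{H_0^1}^2$. Summing these three pieces gives the first term $\tfrac12(1+\Omega M C_3+V_{\max}C_3^2)\|v\|_{H_0^1}^2$ in the claimed bound.

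The main work is in the quartic interaction term. Writing $|u+v|^2 = |u|^2 + 2\Re(\overline u v) + |v|^2$ and squaring,
\begin{equation*}
|u+v|^4 - |u|^4 = 4|u|^2\Re(\overline u v) + 2|u|^2|v|^2 + 4(\Re(\overline u v))^2 + 4\Re(\overline u v)|v|^2 + |v|^4.
\end{equation*}
The first summand integrates to the linear Fréchet contribution $\beta\,\Re(|u|^2 u, v)_{L^2}$, so the remainder consists of the four tail terms, multiplied by $\beta/4$. I estimate them with the $L^4$--$H_0^1$ embedding from Lemma~\ref{lemma_2_1}: using $|\Re(\overline u v)|\le|u||v|$ and Hölder, the quadratic-in-$v$ terms yield $\tfrac{\beta}{2}C_1^4\|u\|_{H_0^1}^2\|v\|_{H_0^1}^2$ and $\beta C_1^4\|u\|_{H_0^1}^2\|v\|_{H_0^1}^2$ respectively, summing to $\tfrac{3\beta}{2}C_1^4\|u\|_{H_0^1}^2\|v\|_{H_0^1}^2$; the cubic-in-$v$ term yields $\beta C_1^4\|u\|_{H_0^1}\|v\|_{H_0^1}^3$; and $\tfrac{\beta}{4}\|v\|_{L^4}^4\le\tfrac{\beta}{4}C_1^4\|v\|_{H_0^1}^4$.

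Combining all pieces and applying the triangle inequality to the signed remainder produces exactly the stated bound. The only mildly delicate step is the rotation part, where one must verify that $L_z$ is symmetric on $H_0^1(\mathcal{D})$ (an integration-by-parts argument exploiting the zero boundary trace, handled once in the paper and reused here); the remaining estimates are routine Hölder/Sobolev bookkeeping, so I do not anticipate a genuine analytic obstacle — the difficulty is purely combinatorial in tracking constants.
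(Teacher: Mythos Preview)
Your proposal is correct and follows essentially the same approach as the paper: a direct expansion of $E(u+v)-E(u)$ identifying the linear part as $\langle E'(u),v\rangle=\Re(\nabla_{H_0^1}E(u),v)_{H_0^1}$, followed by term-by-term estimation of the remainder via H\"older and Lemma~\ref{lemma_2_1}. The only cosmetic difference is that for the rotation remainder $-\tfrac{\Omega}{2}\int\overline{v}L_zv$ the paper uses Young's inequality $\tfrac{\Omega}{4}M(C_3^{-1}\|v\|_{L^2}^2+C_3\|v\|_{H_0^1}^2)$ and then Poincar\'e, whereas you go directly via Cauchy--Schwarz and Poincar\'e; both yield the same constant $\tfrac{1}{2}\Omega M C_3$.
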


\begin{proof}
    By direct calculation, we have
    \begin{align*}
        &E(u+v) - E(u) \\
        =& \Re(\nabla_{H_0^1} E(u), v)_{H_0^1} + \frac{1}{2}\int_{\mathcal{D}}(|\nabla v|^2+V|v|^2)dr - \frac{\Omega}{2}\int_{\mathcal{D}}\overline{v}L_z v dr \\
        &\quad + \int_{\mathcal{D}}\beta \left(\Re^2(\overline{u}v)+\frac{1}{2}|u|^2|v|^2 + |v|^2\Re(\overline{u}v)+\frac{1}{4}|v|^4\right)dr
    \end{align*}
    Using the triangle inequality, Hölder's inequality, Young's inequality, and Lemma \ref{lemma_2_1}, we can obtain
    \begin{align*}
        &\left|E(u+v)-E(u)-\Re(\nabla_{H_0^1}E(u),v)\right|\\
        \le& \frac{1}{2}\int_{\mathcal{D}}(|\nabla v|^2+V|v|^2)dr - \frac{\Omega}{2}\left|\int_{\mathcal{D}}\overline{v}L_z v dr\right| \\
        &\quad + \int_{\mathcal{D}}\beta \left(|u|^2|v|^2+\frac{1}{2}|u|^2|v|^2 + |v|^2|uv|^2+\frac{1}{4}|v|^4\right)dr \\
        \le& \frac{1}{2}(1+V_{\max}C_3^2)\|v\|_{H_0^1}^2 + \frac{\Omega}{4}M(C_3^{-1}\|v\|_{L^2}^2+C_3\|v\|_{H_0^1}^2) \\
        &\quad + \frac{3\beta}{2}\|u\|_{L^4}^2\|v\|_{L^4}^2 + \beta\|u\|_{L^4}\|v\|_{L^4}^3 + \frac{\beta}{4}\|v\|_{L^4}^4 \\
        \le& \frac{1}{2}(1+\Omega M C_3 + V_{\max} C_3^2) \|v\|_{H_0^1}^2 + \frac{3\beta}{2} C_1^4 \|u\|_{H_0^1}^2 \|v\|_{H_0^1}^2 \\
        &\quad + \beta C_1^4\|u\|_{H_0^1}\|v\|_{H_0^1}^3 + \frac{\beta}{4} C_1^4 \|v\|_{H_0^1}^4.
    \end{align*}
    Here, the first inequality uses the triangle inequality, the second inequality applies Lemma \ref{lemma_2_1} to the first term, Young's inequality to the second term, and Hölder's inequality to the remaining terms, and the third inequality uses Lemma \ref{lemma_2_1} to bound all $L^2$ norms by $H_0^1$ norms. 
\end{proof}

We now proceed to prove Theorem \ref{theorem_3_1}, which generalized~\cite{chen2024convergence}*{Theorem 3.1} in the non-rotating case.

\begin{proof}[Proof of Theorem \ref{theorem_3_1}]
    We prove these statements by induction. For $n=0$, (i) holds if $C_u \ge \|u_0\|_{H_0^1}$. Assume that (i) holds for $k\le n$, and (ii) and (iii) hold for $k\le n-1$. We aim to show that (i) holds for $n+1$, and (ii) and (iii) hold for $n$.

    First, since (i) holds for $n$, (ii) holds for $n$ by Lemma \ref{lemma_4_2}. Second, let $g_n = \nabla^{\mathcal{R}}_{H_0^1} E(u_n)$, $\tilde{u}_n = u_n - \alpha_n g_n$, and $R_n=R(\tilde{u}_n)-\tilde{u}_n$. Similar to~\cite{chen2024convergence}*{Lemma 4.3}, we have
    \begin{equation}\label{equation_4_3}
        \begin{aligned}
            \|R_n\|_{H_0^1} &\le \frac{\alpha_n^2}{2} \|g_n\|_{L^2}^2 \|u_n-\alpha_n g_n\|_{H_0^1} \le \frac{\alpha_n^2}{2} (C_u + \alpha_n C_g)C_3^2\|g_n\|_{H_0^1}^2\\
            &\le \frac{\alpha_n^2}{2}(C_u + \alpha_n C_g) C_3^2 C_g^2 \le \frac{1}{2}(C_u + C_g)C_3^2 C_g^2.
        \end{aligned}
    \end{equation}
    On the other hand, similar to the proof of Lemma \ref{lemma_4_2}, we have
    \begin{align*}
        \|\nabla_{H_0^1} E(\tilde{u}_n)\|_{H_0^1} &\le \beta C_1^3 C_2 \|\tilde{u}_n\|_{H_0^1}^3 + (1+\Omega M C_3)\|\tilde{u}_n\|_{H_0^1} + C_3 V_{\max} \|\tilde{u}_n\|_{L^2}\\
        &\le \beta C_1^3 C_2 (C_u+C_g)^3 + (1+\Omega M C_3+C_3^2 V_{\max})(C_u+C_g).
    \end{align*}
    Therefore, by Lemma \ref{lemma_4_4}, we obtain
    \begin{equation}\label{equation_4_1}
        \begin{aligned}
            \left|E(\tilde{u}_n)-E(\tilde{u}_n+R_n)\right| \le& \left|\Re(\nabla_{H_0^1} E(\tilde{u}_n), R_n)_{H_0^1}\right| + \frac{1}{2}(1+\Omega M C_3 + V_{\max} C_3^2) \|R_n\|_{H_0^1}^2\\
            &\quad + \frac{3\beta}{2} C_1^4 \|\tilde{u}_n\|_{H_0^1}^2 \|R_n\|_{H_0^1}^2 + \beta C_1^4\|\tilde{u}_n\|_{H_0^1}\|R_n\|_{H_0^1}^3 + \frac{\beta}{4} C_1^4 \|R_n\|_{H_0^1}^4\\
            \le&  \|R_n\|_{H_0^1}\Big[\|\nabla_{H_0^1} E(\tilde{u}_n)\|_{H_0^1} + \frac{1}{2}(1+\Omega M C_3 + V_{\max} C_3^2) \|R_n\|_{H_0^1}\\
            &\quad + \frac{3\beta}{2} C_1^4 \|\tilde{u}_n\|_{H_0^1}^2 \|R_n\|_{H_0^1} + \beta C_1^4\|\tilde{u}_n\|_{H_0^1}\|R_n\|_{H_0^1}^2 + \frac{\beta}{4} C_1^4 \|R_n\|_{H_0^1}^3\Big]\\
            \le& \alpha_n^2 C_R \|g_n\|_{H_0^1}^2, 
        \end{aligned} 
    \end{equation}
    and
    \begin{equation}\label{equation_4_2}
        \begin{aligned}
            &|E(u_n-\alpha_n g_n) - E(u_n) - \Re(\nabla_{H_0^1} E(u_n), -\alpha_n g_n)_{H_0^1}| \\
            \le &\frac{1}{2}(1+\Omega M C_3 + V_{\max} C_3^2) \|\alpha_n g_n\|_{H_0^1}^2 + \frac{3\beta}{2} C_1^4 \|u_n\|_{H_0^1}^2 \|\alpha_n g_n\|_{H_0^1}^2 \\
            &+ \beta C_1^4\|u_n\|_{H_0^1}\|\alpha_n g_n\|_{H_0^1}^3 + \frac{\beta}{4} C_1^4 \|\alpha_n g_n\|_{H_0^1}^4 \le \alpha_n^2 \widetilde{C}_R \|g_n\|_{H_0^1}^2, 
        \end{aligned}
    \end{equation}
    where $C_R$ and $\widetilde{C}_R$ are constants depending only on $\beta$, $\Omega$, $M$, $V_{\max}$, $C_u$, $C_g$, $C_1$, $C_2$, and $C_3$. Combining inequalities \eqref{equation_4_1} and \eqref{equation_4_2}, and choosing $C_{\alpha}\le 2^{-1}(C_R+\widetilde{C}_R)^{-1}$, we obtain
    \begin{align*}
        E(u_n) - E(u_{n+1}) &= E(u_n) - E(u_n-\alpha_n g_n) + E(\tilde{u}_n) - E(\tilde{u}_n+R_n)\\
        &\ge \Re(\nabla_{H_0^1} E(u_n), \alpha_n g_n)_{H_0^1} - \alpha_n^2(C_R+\widetilde{C}_R)\|g_n\|_{H_0^1}^2\\
        &= \alpha_n\left(1-\alpha_n(C_R+\widetilde{C}_R)\right)\|g_n\|_{H_0^1}^2 \ge \frac{\alpha_{\min}}{2} \|g_n\|_{H_0^1}^2. 
    \end{align*}
    Thus, (iii) holds for $n$. 

    Third, due to the norm equivalence in Lemma \ref{lemma_2_2} and the energy dissipation property (iii), we have
    \begin{align*}
        \|u_{n+1}\|_{H_0^1}^2 &\le \frac{1+K}{K} \|u_{n+1}\|_{a_0}^2 \le \frac{2(1+K)}{K} E(u_{n+1}) \le \frac{2(1+K)}{K} E(u_0)\\
        &\le \frac{1+K}{2K}\beta C_1^4 \|u_0\|_{H_0^1}^4 + \frac{1+K}{K}(1+V_{\max}C_3^2+\Omega M C_3)\|u_0\|_{H_0^1}^2. 
    \end{align*}
    By setting $C_u^2\ge \frac{1+K}{2K}\beta C_1^4 \|u_0\|_{H_0^1}^4 + \frac{1+K}{K}(1+V_{\max}C_3^2+\Omega M C_3)\|u_0\|_{H_0^1}^2$, (i) holds for $n+1$. In summary, by induction, the statements holds.
\end{proof}

To prove Theorem \ref{theorem_3_3}, we need to establish an auxiliary Theorem \ref{theorem_4_1}. Thanks to~\cite{chen2024convergence}*{Theorem 4.5}, we only need to add estimates for the rotation term.

\begin{theorem}\label{theorem_4_1}
    Let $\{u_n\}_{n=0}^\infty$ be an $H_0^1$-bounded sequence on $\mathcal{M}$ such that
    \begin{equation*}
        \lim_{n\to\infty} \left\|\nabla^{\mathcal{R}}_{H_0^1} E(u_n)\right\|_{H_0^1} = 0.
    \end{equation*}
    If $u^*$ is a weak limit point of $\{u_n\}_{n=0}^\infty$ in $H_0^1(\mathcal{D})$, then $u^*$ is a stationary point of the energy functional $E(u)$, and $\{u_n\}_{n=0}^\infty$ admits a subsequence converging strongly to $u^*$.
\end{theorem}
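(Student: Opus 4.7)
The plan is to follow the structure of \cite{chen2024convergence}*{Theorem 4.5} and carefully reinforce every step with the compactness needed to treat the rotation term $\Omega L_z u$. Since the sequence $\{u_n\}$ is bounded in $H_0^1$, we can extract a subsequence (not relabeled) with $u_n \rightharpoonup u^*$ weakly in $H_0^1(\mathcal{D})$. By Rellich--Kondrachov and Lemma \ref{lemma_2_1}, this subsequence converges strongly in $L^2$ and in $L^4$. In particular $\|u^*\|_{L^2} = 1$, so $u^* \in \mathcal{M}$, and $|u_n|^2 u_n \to |u^*|^2 u^*$ strongly in $L^{4/3}$, hence in $H^{-1}$ by Lemma \ref{lemma_2_1}.

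The first key step is to show that $\nabla_{H_0^1} E(u_n) \to \nabla_{H_0^1} E(u^*)$ strongly in $H_0^1(\mathcal{D})$. The linear and nonlinear terms $V u_n$ and $\beta |u_n|^2 u_n$ are handled as in the non-rotating case via Lemma \ref{lemma_4_1}. The new ingredient is the rotation term: since $L_z : H_0^1(\mathcal{D}) \to L^2(\mathcal{D})$ is bounded with $\|L_z v\|_{L^2} \le M\|v\|_{H_0^1}$, the weak convergence $u_n \rightharpoonup u^*$ in $H_0^1$ yields $L_z u_n \rightharpoonup L_z u^*$ weakly in $L^2$. The compact embedding $L^2(\mathcal{D}) \hookrightarrow H^{-1}(\mathcal{D})$ then promotes this to strong convergence in $H^{-1}$, and Lemma \ref{lemma_4_1} gives $\mathcal{G}_{H_0^1}(L_z u_n) \to \mathcal{G}_{H_0^1}(L_z u^*)$ strongly in $H_0^1$. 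Combined with the analogous statements for the other terms, this proves the claim about $\nabla_{H_0^1} E(u_n)$.

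Next, I would pass to the limit in the projection formula $\nabla^{\mathcal{R}}_{H_0^1} E(u_n) = \nabla_{H_0^1} E(u_n) - \lambda_n \mathcal{G}_{H_0^1} u_n$, where $\lambda_n$ is the scalar multiplier appearing in the definition of $\mathcal{P}_{u_n,H_0^1}$. Since $u_n \to u^*$ strongly in $L^2$, Lemma \ref{lemma_4_1} yields $\mathcal{G}_{H_0^1} u_n \to \mathcal{G}_{H_0^1} u^*$ strongly in $H_0^1$, and $\|\mathcal{G}_{H_0^1} u^*\|_{H_0^1} > 0$ because $\|u^*\|_{L^2} = 1$. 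Together with the strong convergence of $\nabla_{H_0^1} E(u_n)$, this makes $\lambda_n$ a convergent numerical sequence. The hypothesis $\|\nabla^{\mathcal{R}}_{H_0^1} E(u_n)\|_{H_0^1} \to 0$ forces $\nabla^{\mathcal{R}}_{H_0^1} E(u^*) = 0$, i.e. $u^*$ is a stationary point of \eqref{equation_2_6}.

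Finally, for the strong convergence of the subsequence, I would invert the identity $u_n = \nabla_{H_0^1} E(u_n) - \mathcal{G}_{H_0^1}(V u_n + \beta |u_n|^2 u_n - \Omega L_z u_n)$. Both terms on the right converge strongly in $H_0^1$ by the arguments above (with the rotation contribution treated as the main novelty), so $u_n \to u^*$ strongly in $H_0^1(\mathcal{D})$. The main obstacle is exactly the rotation term: unlike the mass and potential terms, $L_z u$ involves first-order derivatives, so one cannot directly exploit a compact embedding of $u$-values; the argument instead uses that $L_z$ maps $H_0^1$ continuously into $L^2$, followed by compactness from $L^2$ into $H^{-1}$, which is the only way to recover the missing compactness without linearization.
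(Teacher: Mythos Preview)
Your treatment of the rotation term is correct and is actually a nice alternative to the paper's argument (the paper uses self-adjointness of $L_z$ to bound $\|L_z(u_n-u^*)\|_{H^{-1}} \le M\|u_n-u^*\|_{L^2}$ directly, whereas you go via weak $L^2$-convergence of $L_z u_n$ plus compactness of $L^2 \hookrightarrow H^{-1}$). However, there is a genuine gap in your second step: the claim that $\nabla_{H_0^1} E(u_n) \to \nabla_{H_0^1} E(u^*)$ \emph{strongly} in $H_0^1$ is false as stated. Recall
\[
\nabla_{H_0^1} E(u_n) = u_n + \mathcal{G}_{H_0^1}\bigl(Vu_n + \beta|u_n|^2 u_n - \Omega L_z u_n\bigr),
\]
and while your arguments correctly show that the $\mathcal{G}_{H_0^1}$-part converges strongly, the leading term $u_n$ only converges \emph{weakly} in $H_0^1$ at this stage. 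This makes the argument circular: you invoke strong convergence of $\nabla_{H_0^1} E(u_n)$ to establish that $\lambda_n$ converges (step 3), and then feed this back into the identity $u_n = \nabla_{H_0^1} E(u_n) - \mathcal{G}_{H_0^1}(\cdots)$ to deduce strong convergence of $u_n$ (step 4), which is precisely what was missing in step 2.

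The paper avoids this by working only with \emph{weak} convergence of $\nabla_{H_0^1}^{\mathcal{R}} E(u_n)$ to $\nabla_{H_0^1}^{\mathcal{R}} E(u^*)$ (enough to conclude stationarity, since the norms tend to zero), and then proving $\|u_n\|_{H_0^1} \to \|u^*\|_{H_0^1}$ separately by pairing $\nabla_{H_0^1}^{\mathcal{R}} E(u_n)$ with $u_n$. Your route can also be repaired with a small reordering: compute $\lambda_n$ directly from its explicit formula (which only involves $\|\mathcal{G}_{H_0^1} u_n\|_{H_0^1}^2$ and $L^2$-pairings of $u_n$ with the $\mathcal{G}_{H_0^1}$-terms, all of which converge), and then use
\[
u_n = \nabla_{H_0^1}^{\mathcal{R}} E(u_n) + \lambda_n \mathcal{G}_{H_0^1} u_n - \mathcal{G}_{H_0^1}\bigl(Vu_n + \beta|u_n|^2 u_n - \Omega L_z u_n\bigr),
\]
where now every term on the right genuinely converges strongly in $H_0^1$ (the first to $0$ by hypothesis), yielding both $u_n \to u^*$ strongly and $\nabla_{H_0^1}^{\mathcal{R}} E(u^*) = 0$ simultaneously.
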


\begin{proof}
    Since the sequence $\{u_n\}_{n=0}^\infty$ is bounded in $H_0^1(\mathcal{D})$, by Rellich-Kondrachov compact embedding theorem, there exists a subsequence (still denoted by $\{u_n\}_{n=0}^\infty$ for simplicity) and a function $u^* \in H_0^1(\mathcal{D})$ such that as $n\to\infty$,
    \begin{equation*}
        u_n \to u^*,\quad\quad\text{weakly in $H_0^1(\mathcal{D})$, and strongly in $L^2(\mathcal{D})$ and $L^4(\mathcal{D})$.}
    \end{equation*}
    Furthermore, $u^* \in \mathcal{M}$. We analyze the convergence of each term in $\nabla_{H_0^1}^{\mathcal{R}} E(u_n)$ one by one.

    First, for the non-rotation terms, following~\cite{chen2024convergence}*{Theorem 4.5}, we have
    \begin{equation*}
        \mathcal{G}_{H_0^1}(u_n) \to \mathcal{G}_{H_0^1}(u^*), \quad\quad \mathcal{G}_{H_0^1}(Vu_n) \to \mathcal{G}_{H_0^1}(Vu^*), \quad\quad \mathcal{G}_{H_0^1}\left(\beta|u_n|^2 u_n\right) \to \mathcal{G}_{H_0^1}\left(\beta|u^*|^2 u^*\right)
    \end{equation*}
    strongly in $H_0^1(\mathcal{D})$ and $L^2(\mathcal{D})$. For the rotation term, by Lemma \ref{lemma_4_1}, we have
    \begin{align*}
        \|\mathcal{G}_{H_0^1}(L_z u_n-L_z u^*)\|_{H_0^1} &\le \|L_z(u_n-u^*)\|_{H^{-1}} = \sup_{\|w\|_{H_0^1}=1}\left|(L_z(u_n-u^*), w)_{L^2}\right| \\
        &= \sup_{\|w\|_{H_0^1}=1}\left|(u_n-u^*, L_z w)_{L^2}\right| \le \sup_{\|w\|_{H_0^1}=1}\int_{\mathcal{D}} |r||\nabla w||u_n-u^*|dr \\
        &\le \sup_{\|w\|_{H_0^1}=1}M\|w\|_{H_0^1}\|u_n-u^*\|_{L^2} = M\|u_n-u^*\|_{L^2} \to 0.
    \end{align*}
    Hence, $\mathcal{G}_{H_0^1}(\Omega L_z u_n) \to \mathcal{G}_{H_0^1}(\Omega L_z u^*)$ strongly in $H_0^1(\mathcal{D})$ and $L^2(\mathcal{D})$. For simplicity, let
    \begin{align*}
        \gamma_n &= \frac{1+\Re(u_n, \mathcal{G}_{H_0^1}(Vu_n+\beta|u_n|^2 u_n - \Omega L_z u_n))_{L^2}}{\|\mathcal{G}_{H_0^1} u_n\|_{H_0^1}^2}, \\
        \gamma^* &= \frac{1+\Re(u^*, \mathcal{G}_{H_0^1}(Vu^*+\beta|u^*|^2 u^* - \Omega L_z u^*))_{L^2}}{\|\mathcal{G}_{H_0^1} u^*\|_{H_0^1}^2}.
    \end{align*}
    Then, based on the convergence of each term, we have $\gamma_n \to \gamma^*$.

    In summary, $\nabla_{H_0^1}^{\mathcal{R}} E(u_n) \to \nabla_{H_0^1}^{\mathcal{R}} E(u^*)$ weakly in $H_0^1(\mathcal{D})$. Since $\lim\limits_{n\to\infty} \|\nabla^{\mathcal{R}}_{H_0^1} E(u_n)\|_{H_0^1} = 0$, we have
    \begin{equation*}
        \left\|\nabla^{\mathcal{R}}_{H_0^1} E(u^*)\right\|_{H_0^1}^2 = \lim_{n\to\infty}\left(\nabla^{\mathcal{R}}_{H_0^1} E(u^*), \nabla^{\mathcal{R}}_{H_0^1} E(u_n)\right)_{H_0^1} = 0.
    \end{equation*}
    This implies that $u^*$ is a stationary point of the functional $E(u)$.

    Finally, since $\lim\limits_{n\to\infty} \|\nabla^{\mathcal{R}}_{H_0^1} E(u_n)\|_{H_0^1} = 0$, and $\{u_n\}_{n=0}^{\infty}$ is bounded in $H_0^1(\mathcal{D})$, we have
    \begin{equation}\label{equation_4_4}
        \lim_{n\to\infty} \|u_n\|_{H_0^1}^2 + \left(Vu_n+\beta|u_n|^2 u_n - \Omega L_z u_n, u_n\right)_{L^2} - \gamma_n = \lim_{n\to\infty} \left(\nabla^{\mathcal{R}}_{H_0^1} E(u_n), u_n\right)_{H_0^1} = 0. 
    \end{equation}
    Since $u^*$ is a stationary point, we have 
    \begin{equation}\label{equation_4_6}
        \|u^*\|_{H_0^1}^2 + \left(Vu^*+\beta|u^*|^2 u^* - \Omega L_z u^*, u^*\right)_{L^2} - \gamma^* = \left(\nabla^{\mathcal{R}}_{H_0^1} E(u^*), u^*\right)_{H_0^1} = 0.
    \end{equation}
    Note that, based on the above convergences,
    \begin{equation}\label{equation_4_5}
        \lim_{n\to\infty} \left(Vu_n+\beta|u_n|^2 u_n - \Omega L_z u_n, u_n\right)_{L^2} - \gamma_n = \left(Vu^*+\beta|u^*|^2 u^* - \Omega L_z u^*, u^*\right)_{L^2} - \gamma^*.
    \end{equation}
    Combining equations \eqref{equation_4_4}, \eqref{equation_4_6} and \eqref{equation_4_5}, we obtain $\lim\limits_{n\to\infty} \|u_n\|_{H_0^1}^2 = \|u^*\|_{H_0^1}^2$. Together with the weak convergence of $u_n$ to $u^*$ in $H_0^1(\mathcal{D})$, we have
    \begin{equation*}
        \lim_{n\to\infty} \|u_n-u^*\|_{H_0^1}^2 = \lim_{n\to\infty} \|u_n\|_{H_0^1}^2 + \|u^*\|_{H_0^1}^2 - 2\Re(u_n, u^*)_{H_0^1} = \|u^*\|_{H_0^1}^2 + \|u^*\|_{H_0^1}^2 - 2\|u^*\|_{H_0^1}^2 = 0, 
    \end{equation*}
    which means that $u_n$ converges strongly to $u^*$ in $H_0^1(\mathcal{D})$.
\end{proof}

Finally, we can obtain Theorem \ref{theorem_3_3} for the rotating case.

\begin{proof}[Proof of Theorem \ref{theorem_3_3}]
    According to Theorem \ref{theorem_3_1}, we have
    \begin{equation*}
        \sum_{n=0}^{\infty} \left\|\nabla^{\mathcal{R}}_{H_0^1} E(u_n)\right\|_{H_0^1}^2 \le \frac{2}{\alpha_{\min}} \sum_{n=0}^{\infty}\left(E(u_n)-E(u_{n+1})\right) \le \frac{2E(u_0)}{\alpha_{\min}} < \infty,
    \end{equation*}
    which implies
    \begin{equation*}
        \lim_{n\to\infty} \left\|\nabla^{\mathcal{R}}_{H_0^1} E(u_n)\right\|_{H_0^1} = 0.
    \end{equation*}
    Similarly, based on Theorem \ref{theorem_3_1}, $\{u_n\}_{n=0}^{\infty}$ is bounded. The proof is then completed by applying Theorem \ref{theorem_4_1}.
\end{proof}

\begin{remark}
    (i) As noted at the beginning of this section, the proof presented here addresses only the global convergence of the $H_0^1$ scheme. The argument for the $a_0$ scheme is analogous. \\
    (ii) The results of this section extend the non-rotating case studied in~\cite{chen2024convergence}. However, local convergence does not follow directly. To handle this, we introduce the framework of quotient spaces and quotient metrics in the subsequent analysis.
\end{remark}

\section{Proof of Local Convergence}\label{section_5}

In this section, we will prove Theorem \ref{theorem_3_5} and Theorem \ref{theorem_3_7}. Theorem \ref{theorem_3_6} can be proved using the same method as Theorem \ref{theorem_3_5}. First, for $u\in\mathcal{M}$, the second-order Fréchet derivative of the Gross--Pitaevskii energy functional $E(u)$ at $u$ is given by
\begin{equation*}
    \langle E''(u)v, w \rangle := \Re(\nabla v, \nabla w)_{L^2} + \Re( Vv + \beta|u|^2v - \Omega L_z v + 2\beta \Re(\overline{u}v)u, w)_{L^2}\quad\quad\forall v,w\in H_0^1(\mathcal{D}).
\end{equation*}

\begin{lemma}\label{lemma_5_1}
    Let 
        \begin{align*}
            \mu =& \frac{K(\lambda_2-\lambda)}{4(1+K)\lambda_2}C_3\|u^*\|_{H_0^1} + \beta C_4^3\|u^*\|_{H_0^1} + \beta C_2C_3\|u^*\|_{L^4}^3\left(1+C_3\|u^*\|_{H_0^1}\right).
        \end{align*}
    and assume $u \in \mathcal{M}$ is close enough to $u^*$ such that $\rho_1(u) \le \mu^{-1} \frac{K(\lambda_2-\lambda)}{4(1+K)\lambda_2}$, then we have
        \begin{equation*}
            E(u) - E(u^*) \ge \left(\frac{K(\lambda_2-\lambda)}{4(1+K)\lambda_2} - \mu\rho_1(u)\right) \rho_2^2(u). 
        \end{equation*}
\end{lemma}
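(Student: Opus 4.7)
The plan is to exploit the phase invariance of $E$ to pick an $L^2$-optimal representative $\tilde{u}^* := e^{\mathrm{i}\omega^*} u^*$ of $u^*$, Taylor expand $E$ around $\tilde{u}^*$, and isolate a leading coercive contribution of the form $\tfrac{1}{2}K_0\|w\|_{H_0^1}^2$ with $K_0 := \tfrac{K(\lambda_2-\lambda)}{2(1+K)\lambda_2}$ and $w := u - \tilde{u}^*$, by applying Lemma~\ref{lemma_3_1}. All remaining terms will then be shown to be of order $\rho_1(u)\,\|w\|_{H_0^1}^2$; since $\|w\|_{H_0^1} \ge \rho_2(u)$ by the definition of $\rho_2$, the stated inequality follows.

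Concretely, I let $\omega^*$ minimize $\omega \mapsto \|u - e^{\mathrm{i}\omega}u^*\|_{L^2}$, set $\tilde{u}^* := e^{\mathrm{i}\omega^*} u^*$, $w := u - \tilde{u}^*$, so that $\|w\|_{L^2} = \rho_1(u)$. The first-order optimality for $\omega^*$ gives $\Im(\tilde{u}^*, w)_{L^2} = 0$ (hence $w \in T_{\mathrm{i}\tilde{u}^*}\mathcal{M}$), and mass conservation yields $\Re(\tilde{u}^*, w)_{L^2} = -\tfrac{1}{2}\rho_1^2(u)$. Since $\tilde{u}^*$ also satisfies \eqref{equation_2_7} with eigenvalue $\lambda$ and $E(\tilde{u}^*) = E(u^*)$ by phase invariance, a direct Taylor expansion gives
\[
E(u) - E(u^*) = \tfrac{1}{2}\bigl[\langle E''(\tilde{u}^*) w, w\rangle - \lambda\|w\|_{L^2}^2\bigr] + R_3(w) + R_4(w),
\]
with $R_3(w) = \beta\!\int\!\Re(\overline{\tilde{u}^*}w)|w|^2\,dr$ and $R_4(w) = \tfrac{\beta}{4}\|w\|_{L^4}^4$, after noting that $\langle E'(\tilde{u}^*), w\rangle = \lambda\Re(\tilde{u}^*, w)_{L^2} = -\tfrac{\lambda}{2}\|w\|_{L^2}^2$.

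To invoke Lemma~\ref{lemma_3_1}, I decompose $w = w_\perp + c\,\tilde{u}^*$ with $c := -\tfrac{1}{2}\rho_1^2(u) \in \mathbb{R}$, which places $w_\perp$ in $T_{\tilde{u}^*}\mathcal{M}\cap T_{\mathrm{i}\tilde{u}^*}\mathcal{M}$. Using the identities $\langle E''(\tilde{u}^*)\tilde{u}^*, \varphi\rangle = 2\beta\!\int\!|\tilde{u}^*|^2\Re(\overline{\tilde{u}^*}\varphi)\,dr$ for $\varphi \in T_{\tilde{u}^*}\mathcal{M}$ and $\langle E''(\tilde{u}^*)\tilde{u}^*, \tilde{u}^*\rangle = \lambda + 2\beta\|u^*\|_{L^4}^4$ (both consequences of the eigenvalue equation at $\tilde{u}^*$), the bilinear form splits into the bracket $\langle E''(\tilde{u}^*)w_\perp, w_\perp\rangle - \lambda\|w_\perp\|_{L^2}^2$ plus a cubic cross term of size $\rho_1^2$ and a quartic self-energy term of size $\rho_1^4$. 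Lemma~\ref{lemma_3_1} bounds the bracket below by $K_0\|w_\perp\|_{H_0^1}^2$, and the elementary estimate $\|w_\perp\|_{H_0^1}^2 \ge \|w\|_{H_0^1}^2 - \rho_1^2(u)\|w\|_{H_0^1}\|u^*\|_{H_0^1}$ combined with Poincar\'e ($\rho_1(u) \le C_3\|w\|_{H_0^1}$) produces the first summand of $\mu$. The cubic remainder is bounded via the three-factor H\"older inequality $\int|\tilde{u}^*||w|^3 \le \|u^*\|_{L^6}\|w\|_{L^2}\|w\|_{L^6}^2$ and Lemma~\ref{lemma_2_1}, giving $|R_3(w)| \le \beta C_4^3\|u^*\|_{H_0^1}\rho_1(u)\|w\|_{H_0^1}^2$ (the second summand of $\mu$). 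The cubic cross term $2\beta\rho_1^2(u)\bigl|\!\int\!|\tilde{u}^*|^2\Re(\overline{\tilde{u}^*}w_\perp)\bigr|$ is controlled by $H^{-1}$--$H_0^1$ duality, using $\||\tilde{u}^*|^2\tilde{u}^*\|_{L^{4/3}} = \|u^*\|_{L^4}^3$ and the $L^{4/3} \hookrightarrow H^{-1}$ embedding of Lemma~\ref{lemma_2_1}, yielding $\le 2\beta C_2\|u^*\|_{L^4}^3\rho_1^2(u)\|w_\perp\|_{H_0^1}$; inserting $\|w_\perp\|_{H_0^1} \le \|w\|_{H_0^1} + \tfrac{1}{2}\rho_1^2(u)\|u^*\|_{H_0^1}$ and converting surplus powers of $\rho_1$ by Poincar\'e gives the third summand of $\mu$ with the $(1 + C_3\|u^*\|_{H_0^1})$ factor. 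The quartic $R_4(w)$ and the residual $\rho_1^4\|u^*\|_{L^4}^4$ term are of higher order in $\rho_1(u)$ and absorbed under the smallness assumption.

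The principal technical obstacle is the bookkeeping in the coercive step: the tangential decomposition $w = w_\perp + c\tilde{u}^*$ generates cross terms coupling $w_\perp$, $\rho_1^2$, and $\|u^*\|_{L^4}^4$, and one must exchange $\|w_\perp\|_{H_0^1}$ for $\|w\|_{H_0^1}$ without introducing errors of the wrong order. Because the smallness hypothesis only controls $\rho_1(u)$, every $\rho_1^k$-contribution with $k \ge 2$ has to be systematically re-expressed as $\rho_1(u)\,\|w\|_{H_0^1}^{k-1}$ via Poincar\'e, and all resulting constants must assemble precisely into the announced compound $\mu$.
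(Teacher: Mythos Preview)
Your proposal is correct and follows essentially the same approach as the paper's proof: the $L^2$-optimal phase choice, the Taylor expansion of $E$ at the (rotated) ground state, the decomposition $w=w_\perp+c\,\tilde u^*$ to land in $T_{\tilde u^*}\mathcal{M}\cap T_{\mathrm{i}\tilde u^*}\mathcal{M}$, the application of Lemma~\ref{lemma_3_1}, and the subsequent control of cross and cubic terms via $H^{-1}$--$H_0^1$ duality, H\"older $(L^6,L^2,L^6,L^6)$, and Poincar\'e all match the paper's argument line by line. One small simplification the paper uses that you could adopt: the quartic contributions $R_4(w)=\tfrac{\beta}{4}\|w\|_{L^4}^4$ and $c^2\cdot 2\beta\|u^*\|_{L^4}^4$ are nonnegative and can simply be dropped from the lower bound rather than ``absorbed''.
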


\begin{proof}
    Since the energy functional $E(u)$ and the metrics $\rho_1(u)$, $\rho_2(u)$ are phase-independent, i.e., for any $\omega$ and $u$,
    \begin{equation*}
        E(u) = E(\exp(\mathrm{i}\omega)u),\quad\quad \rho_1(u) = \rho_1(\exp(\mathrm{i}\omega)u),\quad\quad \rho_2(u) = \rho_2(\exp(\mathrm{i}\omega)u).
    \end{equation*}
    Thus, without loss of generality, we can assume that $\argmin_\omega \|u-e^{\mathrm{i}\omega}u^*\|_{L^2} = 0$, which implies
    \begin{equation}\label{equation_5_1}
        \Re(u-u^*, \mathrm{i}u^*)_{L^2} = -\frac{1}{2}\frac{d}{dw} \|u-e^{\mathrm{i}\omega}u^*\|_{L^2}^2 \Big|_{w=0} = 0.
    \end{equation}
    Let $\varphi=u-u^*$. Since $u,u^*\in \mathcal{M}$, by the normalization condition, $\|\varphi+u^*\|_{L^2}=\|u\|_{L^2}=\|u^*\|_{L^2}=1$, we have
    \begin{equation}\label{equation_5_2}
        \Re(\varphi, u^*)_{L^2} = \frac{1}{2}\left(\|\varphi+u^*\|_{L^2}^2 - \|u^*\|_{L^2}^2 - \|\varphi\|_{L^2}^2\right) = - \frac{1}{2} \|\varphi\|_{L^2}^2,
    \end{equation}
    Combining equations \eqref{equation_5_1} and \eqref{equation_5_2}, we obtain
    \begin{equation*}
        (\varphi, u^*)_{L^2} = \Re(\varphi, u^*)_{L^2} - \mathrm{i}\Re(\varphi, \mathrm{i}u^*)_{L^2} = -\frac{1}{2}\|\varphi\|_{L^2}^2.
    \end{equation*}
    Therefore, setting $\eta = -\frac{1}{2}\|\varphi\|_{L^2}^2$, we can decompose $\varphi$ as $\varphi = \eta u^* + \psi$, where $(\psi, u^*)_{L^2}=0$, and thus $\Re(\psi, iu^*)=\Re(\psi, u^*)=0$, implying $\psi \in T_{u^*}\mathcal{M}\cap T_{\mathrm{i}u^*}\mathcal{M}$. Let
    \begin{equation*}
        \mathcal{E}(u) := E(u) - \frac{\lambda}{2}\|u\|_{L^2}^2 = \int_\mathcal{D} \left(\frac{1}{2}|\nabla u|^2 + \frac{1}{2}V|u|^2 + \frac{\beta}{4}|u|^4 - \frac{\Omega}{2}\overline{u}L_z u\right) dr - \frac{\lambda}{2}\int_\mathcal{D} |u|^2 dr, 
    \end{equation*}
    According to the normalization condition, we have
    \begin{equation}\label{equation_5_9}
        \begin{aligned}
            E(u) - E(u^*) =& \mathcal{E}(u) - \mathcal{E}(u^*)\\
            =& \left[\langle E'(u^*), \varphi \rangle - \lambda\Re(u^*, \varphi)_{L^2}\right] + \frac{1}{2}\left[\langle E''(u^*)\varphi, \varphi \rangle - \lambda\Re(\varphi, \varphi)_{L^2}\right] \\
            &\quad + \beta\int_\mathcal{D} |\varphi|^2\Re(\overline{u^*}\varphi)dr + \frac{\beta}{4}\int_\mathcal{D}|\varphi|^4 dr.
        \end{aligned}
    \end{equation}
    The first-order derivative term $\langle E'(u^*), \varphi \rangle - \lambda\Re(u^*, \varphi)_{L^2} = 0$, and the final quartic term $\int_\mathcal{D} |\varphi|^4 dr \ge 0$. We proceed to estimate the remaining two terms.

    For the second term on the right-hand side of equation \eqref{equation_5_9}, using the decomposition of $\varphi$, we have
    \begin{equation}\label{equation_5_25}
        \begin{aligned}
            \langle E''(u^*)\varphi, \varphi \rangle - \lambda\Re(\varphi, \varphi)_{L^2} =& \langle E''(u^*)\psi, \psi \rangle - \lambda\Re(\psi, \psi)_{L^2} \\
            &\quad + 2\eta\left[\langle E''(u^*)u^*, \psi \rangle - \lambda\Re(u^*, \psi)_{L^2}\right] \\
            &\quad + \eta^2\left[\langle E''(u^*)u^*, u^* \rangle - \lambda\Re(u^*, u^*)_{L^2}\right].
        \end{aligned}
    \end{equation}
    For the first term on the right-hand side (RHS) of \eqref{equation_5_25}, since $\psi \in T_{u^*}\mathcal{M}\cap T_{iu^*}\mathcal{M}$, by Lemma \ref{lemma_3_1},
    \begin{equation}\label{equation_5_3}
        \langle E''(u^*)\psi, \psi \rangle - \lambda\Re(\psi, \psi)_{L^2} = \langle E''(u^*)\psi, \psi \rangle - \lambda\|\psi\|_{L^2}^2 \ge \frac{K(\lambda_2-\lambda_1)}{2(1+K)\lambda_2}\|\psi\|_{H_0^1}^2.
    \end{equation}
    For the second term on the RHS of \eqref{equation_5_25}, since $u^*$ is a solution to the eigenvalue equation \eqref{equation_2_7}, by Lemma \ref{lemma_2_1},
    \begin{equation}\label{equation_5_4}
        \begin{aligned}
            \langle E''(u^*)u^*, \psi \rangle - \lambda\Re(u^*, \psi)_{L^2} =& 2\beta \Re\int_\mathcal{D} |u^*|^2 \overline{u^*} \psi dr \le 2\beta |\langle |u^*|^2u^*, \psi \rangle| \\
            \le& 2\beta\||u^*|^2u^*\|_{H^{-1}}\|\psi\|_{H_0^1} \le 2\beta C_2 \|u^*\|_{L^4}^3 \|\psi\|_{H_0^1}.
        \end{aligned}
    \end{equation}
    For the third term on the RHS of \eqref{equation_5_25}, we have
    \begin{equation}\label{equation_5_5}
        \langle E''(u^*)u^*, u^* \rangle - \lambda\Re(u^*, u^*)_{L^2} = 2\beta\int_\mathcal{D} |u^*|^4 dr = 2\beta\|u^*\|_{L^4}^4 \ge 0.
    \end{equation}
    Substituting inequalities \eqref{equation_5_3}-\eqref{equation_5_5} into equation \eqref{equation_5_25}, and noting that $\eta<0$, we obtain
    \begin{equation}\label{equation_5_6}
        \langle E''(u^*)\varphi, \varphi \rangle - \lambda\Re(\varphi, \varphi)_{L^2} \ge \frac{K(\lambda_2-\lambda)}{2(1+K)\lambda_2}\|\psi\|_{H_0^1}^2 - 2\beta C_2\|u^*\|_{L^4}^3\|\psi\|_{H_0^1}\|\varphi\|_{L^2}^2.
    \end{equation}
    Furthermore, $\psi = \varphi + \frac{1}{2}\|\varphi\|_{L^2}^2 u^*$, so
    \begin{align}
        \label{equation_5_7} \|\psi\|_{H_0^1} &\ge \|\varphi\|_{H_0^1} - \frac{1}{2}\|\varphi\|_{L^2}^2\|u^*\|_{H_0^1} \ge \|\varphi\|_{H_0^1}\left(1-\frac{1}{2}C_3\|\varphi\|_{L^2}\|u^*\|_{H_0^1}\right),\\
        \label{equation_5_8} \|\psi\|_{H_0^1} &\le \|\varphi\|_{H_0^1} + \frac{1}{2}\|\varphi\|_{L^2}^2\|u^*\|_{H_0^1} \le \|\varphi\|_{H_0^1}\left(1+\frac{1}{2}C_3\|\varphi\|_{L^2}\|u^*\|_{H_0^1}\right).
    \end{align}
    Combining inequalities \eqref{equation_5_6}-\eqref{equation_5_8}, we finally obtain the lower bound estimate for equation \eqref{equation_5_25}:
    \begin{equation}\label{equation_5_10}
        \begin{aligned}
            \langle E''(u^*)\varphi, \varphi \rangle - \lambda\Re(\varphi, \varphi)_{L^2} \ge& \frac{K(\lambda_2-\lambda)}{2(1+K)\lambda_2}\|\varphi\|_{H_0^1}^2 \left(1-\frac{1}{2}C_3\|\varphi\|_{L^2}\|u^*\|_{H_0^1}\right)^2 \\
            &\quad - 2\beta C_2\|u^*\|_{L^4}^3\|\varphi\|_{H_0^1}\left(1+\frac{1}{2}C_3\|\varphi\|_{L^2}\|u^*\|_{H_0^1}\right)\|\varphi\|_{L^2}^2 \\
            \ge& \frac{K(\lambda_2-\lambda)}{2(1+K)\lambda_2}\left(1-\frac{1}{2}C_3\|\varphi\|_{L^2}\|u^*\|_{H_0^1}\right)^2 \|\varphi\|_{H_0^1}^2 \\
            &\quad - 2\beta C_2C_3\|u^*\|_{L^4}^3\left(1+C_3\|u^*\|_{H_0^1}\right)\|\varphi\|_{L^2} \|\varphi\|_{H_0^1}^2 \\
            \ge& \left(\frac{K(\lambda_2-\lambda)}{2(1+K)\lambda_2} - 2\mu\rho_1(u) + 2\beta C_4^3\|u^*\|_{H_0^1}\rho_1(u)\right)\|\varphi\|_{H_0^1}^2.
        \end{aligned}
    \end{equation}
    The second inequality uses Poincaré's inequality and the triangle inequality, and the third inequality omits the quadratic term of $\|\varphi\|_{L^2}$ from the squared term.

    For the cubic term of $\varphi$ in equation \eqref{equation_5_9}, using Hölder's inequality, we obtain
    \begin{equation}\label{equation_5_11}
        \begin{aligned}
            \beta \int_\mathcal{D} |\varphi|^2\Re(\overline{u^*}\varphi)dr &\ge -\beta\int_\mathcal{D} |\varphi|^3|u^*|dr \ge -\beta \|u^*\|_{L^6}\|\varphi\|_{L^6}^2\|\varphi\|_{L^2} \\
            &\ge -\beta C_4^3\|u^*\|_{H_0^1}\|\varphi\|_{H_0^1}^2\rho_1(u).
        \end{aligned}
    \end{equation}

    Combining equations \eqref{equation_5_9}, \eqref{equation_5_10} and \eqref{equation_5_11}, we finally get
    \begin{align*}
        E(u)-E(u^*) &\ge \left(\frac{K(\lambda_2-\lambda)}{4(1+K)\lambda_2} - \mu\rho_1(u)\right) \|\varphi\|_{H_0^1}^2 = \left(\frac{K(\lambda_2-\lambda)}{4(1+K)\lambda_2} - \mu\rho_1(u)\right) \|u-u^*\|_{H_0^1}^2 \\
        &\ge \left(\frac{K(\lambda_2-\lambda)}{4(1+K)\lambda_2} - \mu\rho_1(u)\right) \min_{\omega\in [-\pi, \pi)}\|u-e^{\mathrm{i}\omega}u^*\|_{H_0^1}^2 = \left(\frac{K(\lambda_2-\lambda)}{4(1+K)\lambda_2} - \mu\rho_1(u)\right) \rho_2^2(u). 
    \end{align*}
    This completes the proof.
\end{proof}

\begin{lemma}\label{lemma_5_2}
    Let $C_u$ be the constant defined in Theorem \ref{theorem_3_1}, and $C_{\rho} < \frac{1}{2}C_3^{-3}\|\mathcal{G}_{H_0^1} u^*\|_{H_0^1}^2$ be a positive constant. Then there exists a constant $L$ depending only on $D$, $d$, $\beta$, $V$, $\Omega$, $\|u^*\|_{H_0^1}$, and $C_{\rho}$ such that when $u \in \mathcal{M}$ satisfies $\|u\|_{H_0^1} \le C_u$ and $\rho_2(u)\le C_{\rho}$, we have
    \begin{equation*}
        \left\|\nabla_{H_0^1}^{\mathcal{R}} E(u)\right\|_{H_0^1} \le L \rho_2(u). 
    \end{equation*}
\end{lemma}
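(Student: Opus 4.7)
The plan is to recognize the desired inequality as a local Lipschitz estimate for the Riemannian gradient, exploiting that $\nabla^{\mathcal{R}}_{H_0^1}E$ vanishes at every phase-shifted copy of the ground state. Let $\omega^* \in [-\pi,\pi)$ attain the minimum defining $\rho_2(u)$, and set $\tilde{u}^* := e^{\mathrm{i}\omega^*}u^*$, so that $\|u-\tilde{u}^*\|_{H_0^1} = \rho_2(u)$. Since $E$ is phase-invariant, $\tilde{u}^*$ satisfies the same Euler--Lagrange equation as $u^*$, and therefore $\nabla^{\mathcal{R}}_{H_0^1}E(\tilde{u}^*) = 0$. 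The bound thus reduces to showing that $u \mapsto \nabla^{\mathcal{R}}_{H_0^1}E(u)$ is Lipschitz continuous on the set of admissible $u$, with a constant depending only on the allowed parameters.

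Splitting $\nabla^{\mathcal{R}}_{H_0^1}E(u) = \nabla_{H_0^1}E(u) - \gamma(u)\,\mathcal{G}_{H_0^1}u$, where $\gamma(u)$ is the scalar factor from the projection, I would estimate each piece in turn. For the linear contributions $u$, $\mathcal{G}_{H_0^1}(Vu)$ and $\mathcal{G}_{H_0^1}(\Omega L_z u)$, the same calculations used to prove Lemma \ref{lemma_4_2} give Lipschitz constants $1$, $C_3^2 V_{\max}$ and $\Omega M C_3$ respectively, via Lemma \ref{lemma_4_1} and the $L^2 \to H^{-1}$ bound on $L_z$. For the cubic contribution $\mathcal{G}_{H_0^1}(\beta|u|^2u)$, the identity $|u|^2u - |\tilde{u}^*|^2\tilde{u}^* = |u|^2(u-\tilde{u}^*) + (|u|^2-|\tilde{u}^*|^2)\tilde{u}^*$ together with $\bigl||u|^2-|\tilde{u}^*|^2\bigr| \le (|u|+|\tilde{u}^*|)|u-\tilde{u}^*|$, Hölder's inequality with $\tfrac{4}{3} = (\tfrac{1}{4}+\tfrac{1}{2})^{-1}$, and Lemma \ref{lemma_2_1} yields $\||u|^2u - |\tilde{u}^*|^2\tilde{u}^*\|_{L^{4/3}} \lesssim (\|u\|_{H_0^1}^2 + \|u^*\|_{H_0^1}^2)\,\|u-\tilde{u}^*\|_{H_0^1}$, after which Lemma \ref{lemma_4_1} applied through the $H^{-1}$ norm closes this piece, since $\|u\|_{H_0^1} \le C_u$ and $\|u^*\|_{H_0^1}$ is a fixed parameter.

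The correction term is handled via the decomposition $\gamma(u)\mathcal{G}_{H_0^1}u - \gamma(\tilde{u}^*)\mathcal{G}_{H_0^1}\tilde{u}^* = \gamma(u)\mathcal{G}_{H_0^1}(u-\tilde{u}^*) + (\gamma(u)-\gamma(\tilde{u}^*))\mathcal{G}_{H_0^1}\tilde{u}^*$. The crucial ingredient is a uniform lower bound on $\|\mathcal{G}_{H_0^1}u\|_{H_0^1}$: phase invariance gives $\|\mathcal{G}_{H_0^1}\tilde{u}^*\|_{H_0^1} = \|\mathcal{G}_{H_0^1}u^*\|_{H_0^1}$, so by the reverse triangle inequality and Lemma \ref{lemma_4_1}, $\|\mathcal{G}_{H_0^1}u\|_{H_0^1} \ge \|\mathcal{G}_{H_0^1}u^*\|_{H_0^1} - C_3^2\,\rho_2(u)$; combined with $\|\mathcal{G}_{H_0^1}u^*\|_{H_0^1} \le C_3\|u^*\|_{L^2} = C_3$ and the hypothesis $C_\rho < \tfrac{1}{2}C_3^{-3}\|\mathcal{G}_{H_0^1}u^*\|_{H_0^1}^2$, this yields $\|\mathcal{G}_{H_0^1}u\|_{H_0^1} \ge \tfrac{1}{2}\|\mathcal{G}_{H_0^1}u^*\|_{H_0^1}$ throughout the neighborhood. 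The numerator of $\gamma$ is a polynomial of degree $\le 4$ in $u$ and is Lipschitz on the bounded set by the same style of estimate; the quotient rule for Lipschitz functions then gives a Lipschitz bound on $\gamma$, and multiplying by $\mathcal{G}_{H_0^1}u$ (which has $H_0^1$-norm bounded above by $C_3$) completes the correction term.

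The main obstacle is notational rather than analytic: because $E$ is only phase-equivariant, one must systematically compare $u$ with the correct representative $\tilde{u}^* = e^{\mathrm{i}\omega^*}u^*$ rather than with $u^*$ itself, and every estimate involving $u^*$ must exploit phase invariance (specifically $\mathcal{G}_{H_0^1}(e^{\mathrm{i}\omega}w) = e^{\mathrm{i}\omega}\mathcal{G}_{H_0^1}w$ and $|e^{\mathrm{i}\omega}w| = |w|$) to transfer bounds back to $u^*$ in a way that depends only on $\|u^*\|_{H_0^1}$. Once this bookkeeping is in place the final constant $L$ aggregates $C_1,C_2,C_3,C_4$, $V_{\max}$, $\Omega M$, $\beta$, $C_u$, $\|u^*\|_{H_0^1}$ and the lower bound $\tfrac{1}{2}\|\mathcal{G}_{H_0^1}u^*\|_{H_0^1}$ (whose positivity is where $C_\rho$ enters), matching the dependence claimed in the statement.
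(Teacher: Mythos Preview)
Your proposal is correct and follows essentially the same route as the paper's proof: both exploit $\nabla^{\mathcal{R}}_{H_0^1}E(\tilde{u}^*)=0$ (the paper simply normalizes the phase so that $\tilde{u}^*=u^*$), split the Riemannian gradient into the unconstrained gradient and the projection correction $\gamma(u)\,\mathcal{G}_{H_0^1}u$, and bound each piece by $\rho_2(u)$ using Lemma~\ref{lemma_4_1}, the $L^{4/3}$--H\"older estimate for the cubic term, and the lower bound on $\|\mathcal{G}_{H_0^1}u\|_{H_0^1}^2$ furnished by the hypothesis on $C_\rho$. The only cosmetic difference is that the paper writes the correction-term decomposition as $(\gamma(u)-\gamma^*)\mathcal{G}_{H_0^1}u+\gamma^*\mathcal{G}_{H_0^1}(u-u^*)$ and uses $\gamma^*=\lambda$, whereas you group it the other way and bound $\gamma(u)$ directly; both lead to the same constant structure.
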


\begin{proof}
    For any $\omega$ and $u$, we have $\left\|\nabla_{H_0^1}^{\mathcal{R}} E(\exp(\mathrm{i}\omega)u)\right\|_{H_0^1}=\left\|\exp(\mathrm{i}\omega)\nabla_{H_0^1}^{\mathcal{R}} E(u)\right\|_{H_0^1}=\left\|\nabla_{H_0^1}^{\mathcal{R}} E(u)\right\|_{H_0^1}$. Therefore, without loss of generality, we can assume that $\argmin\limits_{\omega \in [-\pi,\pi)} \|u-e^{\mathrm{i}\omega}u^*\|_{H_0^1}=0$, which implies $\rho_2(u) = \|u-u^*\|_{H_0^1}$. Let
    \begin{equation*}
        \gamma(u) = \frac{1+\Re(u, \mathcal{G}_{H_0^1}(Vu+\beta|u|^2 u - \Omega L_z u))_{L^2}}{\|\mathcal{G}_{H_0^1} u\|_{H_0^1}^2}, \quad\quad \gamma^* = \gamma(u^*).
    \end{equation*}
    We have
    \begin{equation}\label{equation_5_12}
        \begin{aligned}
            &\left\|\nabla_{H_0^1}^{\mathcal{R}} E(u)\right\|_{H_0^1} = \left\|\nabla_{H_0^1}^{\mathcal{R}} E(u) - \nabla_{H_0^1}^{\mathcal{R}} E(u^*)\right\|_{H_0^1} \\
            =& \Big\| \left(u + \mathcal{G}_{H_0^1}(Vu+\beta|u|^2 u - \Omega L_z u) - \gamma(u) \mathcal{G}_{H_0^1} u\right) \\
            &\quad - \left(u^* + \mathcal{G}_{H_0^1}(Vu^*+\beta|u^*|^2 u^* - \Omega L_z u^*) - \gamma^* \mathcal{G}_{H_0^1} u^*\right) \Big\|_{H_0^1} \\
            \le& \rho_2(u) + \left\|\mathcal{G}_{H_0^1}(V(u-u^*))\right\|_{H_0^1} + \beta\left\|\mathcal{G}_{H_0^1}\left(|u|^2 u-|u^*|^2 u^*\right)\right\|_{H_0^1} \\
            &\quad + \Omega\left\|\mathcal{G}_{H_0^1}(L_z(u-u^*))\right\|_{H_0^1} + \left\|\gamma(u)\mathcal{G}_{H_0^1}u - \gamma^*\mathcal{G}_{H_0^1}u^*\right\|_{H_0^1}.
        \end{aligned}
    \end{equation}
    From the estimates in Lemma \ref{lemma_4_1} and Theorem \ref{theorem_4_1}, we have
    \begin{align}
        \label{equation_5_13} \left\|\mathcal{G}_{H_0^1}(V(u-u^*))\right\|_{H_0^1} &\le C_3\|V(u-u^*)\|_{L^2} \le C_3 V_{\max}\|u-u^*\|_{L^2} \le C_3^2 V_{\max}\rho_2(u), \\
        \label{equation_5_14} \left\|\mathcal{G}_{H_0^1}(L_z(u-u^*))\right\|_{H_0^1} &\le M \|u-u^*\|_{L^2} \le M C_3 \rho_2(u).
    \end{align}
    For the nonlinear term, by Lemma \ref{lemma_2_1}, Lemma \ref{lemma_4_1} and Hölder's inequality, we obtain
    \begin{equation}\label{equation_5_15}
        \begin{aligned}
            \left\|\mathcal{G}_{H_0^1}\left(|u|^2 u-|u^*|^2 u^*\right)\right\|_{H_0^1} &\le \left\||u|^2 u - |u^*|^2 u^*\right\|_{H^{-1}} \le C_2 \left\||u|^2 u - |u^*|^2 u^*\right\|_{L^{4/3}} \\
            &= C_2 \left\||u|^2(u-u^*) + \left(|u|^2-|u^*|^2\right)u^*\right\|_{L^{4/3}} \\
            &\le C_2 \left( \left\||u|^2(u-u^*)\right\|_{L^{4/3}} + \left\|\left(|u|-|u^*|\right)\left(|u|+|u^*|\right)u^*\right\|_{L^{4/3}} \right) \\
            &\le C_2 \left( \|u\|_{L^4}^2\|u-u^*\|_{L^4} + \|u-u^*\|_{L^4}(\|u\|_{L^4}+\|u^*\|_{L^4})\|u^*\|_{L^4} \right) \\
            &\le C_1^3 C_2 \left(\|u\|_{H_0^1}^2 + \|u\|_{H_0^1}\|u^*\|_{H_0^1} + \|u^*\|_{H_0^1}^2 \right) \rho_2(u) \\
            &\le L_1 \rho_2(u), 
        \end{aligned}
    \end{equation}
    where
    \begin{equation*}
        L_1 = C_1^3 C_2 \left(C_u^2 + C_u\|u^*\|_{H_0^1} + \|u^*\|_{H_0^1}^2 \right).
    \end{equation*}
    For the last term, note that
    \begin{equation*}
        0 = \nabla_{H_0^1}^{\mathcal{R}}E(u^*) = \mathcal{G}_{H_0^1}\left(-\Delta u^*+Vu^*+\beta|u^*|^2u^*-\Omega L_z u^*\right) - \gamma^* \mathcal{G}_{H_0^1}u^* = \left(\lambda-\gamma^*\right)\mathcal{G}_{H_0^1}u^*,
    \end{equation*}
    which implies $\gamma^*=\lambda$. Thus, by Lemma \ref{lemma_4_1}, we have
    \begin{equation}\label{equation_5_16}
        \begin{aligned}
            \left\|\gamma(u)\mathcal{G}_{H_0^1}u - \gamma^*\mathcal{G}_{H_0^1}u^*\right\|_{H_0^1} &\le \left\|\left(\gamma(u)-\gamma^*\right)\mathcal{G}_{H_0^1}u\right\|_{H_0^1} + \left\|\gamma^*\left(\mathcal{G}_{H_0^1}u - \mathcal{G}_{H_0^1}u^*\right)\right\|_{H_0^1} \\
            &\le |\gamma(u)-\gamma^*| \left\|\mathcal{G}_{H_0^1}u\right\|_{H_0^1} + \lambda \left\|\mathcal{G}_{H_0^1}(u-u^*)\right\|_{H_0^1} \\
            &\le C_3|\gamma(u)-\gamma^*| + \lambda C_3^2\rho_2(u).
        \end{aligned}
    \end{equation}

    Now we estimate $|\gamma(u)-\gamma^*|$. Denote $A(u) = \Re(u, \mathcal{G}_{H_0^1}(Vu+\beta|u|^2 u - \Omega L_z u))_{L^2}$ and $B(u) = \|\mathcal{G}_{H_0^1} u\|_{H_0^1}^2$. Then $\gamma(u) = (1+A(u))/B(u)$.

    First, consider $A(u)$. 
    \begin{equation}\label{equation_5_17}
        \begin{aligned}
            |A(u)-A(u^*)| \le& \left|\Re(\mathcal{G}_{H_0^1}(Vu), u-u^*)_{H_0^1}\right| + \left|\Re(\mathcal{G}_{H_0^1}(V(u-u^*)), u^*)_{H_0^1}\right| \\
            &\quad + \beta\left|\Re(\mathcal{G}_{H_0^1}(|u|^2 u), u-u^*)_{H_0^1}\right| + \beta\left|\Re(\mathcal{G}_{H_0^1}(|u|^2 u-|u^*|^2 u^*), u^*)_{H_0^1}\right| \\
            &\quad + \Omega\left|\Re(\mathcal{G}_{H_0^1}(L_z u), u-u^*)_{H_0^1}\right| + \Omega\left|\Re(\mathcal{G}_{H_0^1}(L_z(u-u^*)), u^*)_{H_0^1}\right| \\
            \le& \left\|\mathcal{G}_{H_0^1}(Vu)\right\|_{H_0^1}\|u-u^*\|_{H_0^1} + \left\|\mathcal{G}_{H_0^1}(V(u-u^*))\right\|_{H_0^1}\|u^*\|_{H_0^1} \\
            &\quad + \beta\left\|\mathcal{G}_{H_0^1}(|u|^2 u)\right\|_{H_0^1}\|u-u^*\|_{H_0^1} + \beta\left\|\mathcal{G}_{H_0^1}(|u|^2 u-|u^*|^2 u^*)\right\|_{H_0^1}\|u^*\|_{H_0^1} \\
            &\quad + \Omega\left\|\mathcal{G}_{H_0^1}(L_z u)\right\|_{H_0^1}\|u-u^*\|_{H_0^1} + \Omega\left\|\Re(\mathcal{G}_{H_0^1}(L_z(u-u^*))\right\|_{H_0^1}\|u^*\|_{H_0^1} \\
            \le& L_2 \rho_2(u), 
        \end{aligned}
    \end{equation}
    where
    \begin{equation*}
        L_2 = C_3 V_{\max} + C_3^2 V_{\max}\|u^*\|_{H_0^1} + \beta C_1^3 C_2 C_u^3 + \beta L_1\|u^*\|_{H_0^1} + \Omega M + \Omega M C_3\|u^*\|_{H_0^1}. 
    \end{equation*}
    Next, consider $B(u)$. 
    \begin{equation}\label{equation_5_18}
        \begin{aligned}
            |B(u)-B(u^*)| &= \left|\|\mathcal{G}_{H_0^1}u\|_{H_0^1}^2 - \|\mathcal{G}_{H_0^1}u^*\|_{H_0^1}^2\right| \\
            &= \left(\|\mathcal{G}_{H_0^1}u\|_{H_0^1} + \|\mathcal{G}_{H_0^1}u^*\|_{H_0^1}\right) \Big|\|\mathcal{G}_{H_0^1}u\|_{H_0^1} - \|\mathcal{G}_{H_0^1}u^*\|_{H_0^1}\Big|\\
            &\le \left(\|\mathcal{G}_{H_0^1}u\|_{H_0^1} + \|\mathcal{G}_{H_0^1}u^*\|_{H_0^1}\right) \|\mathcal{G}_{H_0^1}(u-u^*)\|_{H_0^1} \\
            &\le 2C_3^3\rho_2(u).
        \end{aligned}
    \end{equation}
    Therefore, from inequalities \eqref{equation_5_17} and \eqref{equation_5_18}:
    \begin{equation}\label{equation_5_19}
        \begin{aligned}
            |\gamma(u)-\gamma^*| &\le \frac{(1+A(u^*))|B(u^*)-B(u)| + |A(u^*)-A(u)|B(u^*)}{\left(B(u^*)-|B(u)-B(u^*)|\right)B(u^*)} \\
            &\le \frac{2C_3^3(1+A(u^*)) + L_2B(u^*)}{\left(B(u^*)-2C_3^3\rho_2(u)\right)B(u^*)}\rho_2(u) \\
            &\le \frac{2C_3^3\lambda + L_2}{B(u^*)-2C_3^3C_{\rho}}\rho_2(u).
        \end{aligned}
    \end{equation}
    Finally, combining inequalities \eqref{equation_5_12}-\eqref{equation_5_16} and \eqref{equation_5_19}, we obtain
    \begin{equation*}
        \left\|\nabla_{H_0^1}^{\mathcal{R}} E(u)\right\|_{H_0^1} \le L \rho_2(u),
    \end{equation*}
    where
    \begin{equation*}
        L = 1 + C_3^2 V_{\max} + \beta L_1 + \Omega M C_3 + C_3\frac{2C_3^3\lambda + L_2}{B(u^*)-2C_3^3C_{\rho}} + \lambda C_3^2.
    \end{equation*}
    The proof is complete. 
\end{proof}

\begin{remark}\label{remark_5_1}
    Similar to~\cite{chen2024convergence}*{Lemma B.2}, we can prove that when $u\in\mathcal{M}$ satisfies $\|u\|_{H_0^1} \le C_{u}$ and $\rho_2(u)$ is sufficiently small, there exists a constant $L$ depending only on $D$, $d$, $\beta$, $V$, $\Omega$, and $\|u^*\|_{H_0^1}$ such that 
    \begin{equation*}
        \|\nabla_{a_u}^{\mathcal{R}} E(u)\|_{a_{u^*}} \le L\|u-u^*\|_{a_{u^*}}. 
    \end{equation*}
    This result will be used in the proof of Theorem \ref{theorem_3_7}. 
\end{remark}

Now, we prove Theorem \ref{theorem_3_5}.

\begin{proof}[Proof of Theorem \ref{theorem_3_5}]
    We proceed by induction. Define  
    \begin{align*}
        \omega_n = \argmin_{\omega \in [-\pi,\pi)} \|u_n - e^{\mathrm{i}\omega}u^*\|_{H_0^1}, 
        \qquad u_n^* = e^{\mathrm{i}\omega_n}u^*, 
        \qquad e_n = u_n^* - u_n, 
        \qquad \delta_n = \rho_2(u_n)
    \end{align*}
    and hence $\delta_n = \|e_n\|_{H_0^1}$. We claim that $\delta_n \leq \theta^n \delta_0$ for some constant $0 < \theta < 1$, whose precise value will be determined below. This holds trivially for $n=0$. Now assume it holds for some fixed $n$, and we show it also holds for $n+1$.

    Let $L$ and $C_\rho$ be the constants from Lemma~\ref{lemma_5_2}. As long as $\delta_0 \leq C_\rho$, then by the inductive hypothesis we also have $\delta_n \leq \delta_0 \leq C_\rho$.  Hence, Lemma \ref{lemma_5_2} is applicable and we have 
    \begin{equation}\label{equation_5_35}
        \begin{aligned}
            &\left\|(u_n - u_n^*) - \alpha_n \nabla_{H_0^1}^{\mathcal{R}}E(u_n)\right\|_{H_0^1}^2 \\
            =& \|u_n - u_n^*\|_{H_0^1}^2 - 2\alpha_n \Re\left(u_n-u_n^*, \nabla_{H_0^1}^{\mathcal{R}}E(u_n)\right)_{H_0^1} + \alpha_n^2\left\|\nabla_{H_0^1}^{\mathcal{R}}E(u_n)\right\|_{H_0^1}^2 \\
            \le& (1+L^2\alpha_n^2)\delta_n^2 + 2\alpha_n\Re\left(e_n, \nabla_{H_0^1}E(u_n)\right)_{H_0^1} + 2\alpha_n\Re\left(e_n, \nabla_{H_0^1}^{\mathcal{R}}E(u_n)-\nabla_{H_0^1}E(u_n)\right)_{H_0^1}.
        \end{aligned}
    \end{equation}
    Note that
    \begin{equation}\label{equation_5_20}
        \begin{aligned}
            E(u^*)  -E(u_n) = E(u_n^*) -E(u_n) =& \langle E'(u_n), e_n \rangle + \frac{1}{2}\langle E''(u_n^*)e_n, e_n \rangle + \beta\int_\mathcal{D}|e_n|^2\Re(\overline{u_n}e_n) dr + \frac{\beta}{4}\int_\mathcal{D}|e_n|^4 dr \\
            &\quad + \frac{\beta}{2}\int_\mathcal{D}\left(|u_n|^2-|u_n^*|^2\right)|e_n|^2 dr + \beta\int_\mathcal{D}\left(\Re^2(\overline{u_n}e_n)-\Re^2(\overline{u_n^*}e_n)\right)dr.
        \end{aligned}
    \end{equation}
    Similar to Lemma \ref{lemma_5_1}, we now discuss each part of the right-hand side of \eqref{equation_5_20}. For the first term on the RHS of \eqref{equation_5_20}, by definition, we have
    \begin{equation}\label{equation_5_29}
        \langle E'(u_n), e_n \rangle = \Re\left(e_n, \nabla_{H_0^1}E(u_n)\right)_{H_0^1}.
    \end{equation}

    We now estimate the second term on the RHS of \eqref{equation_5_20}. From the normalization condition, $\|u_n^*-e_n\|_{L^2}=\|u_n\|_{L^2}=\|u_n^*\|_{L^2}=1$, so 
    \begin{equation*}
        \Re(u_n^*, e_n)_{L^2} = \frac{1}{2}\left(\|u_n^*\|_{L^2}^2 + \|e_n\|_{L^2}^2 - \|u_n^*-e_n\|_{L^2}^2\right) = \frac{1}{2}\|e_n\|_{L^2}^2.
    \end{equation*}
    Therefore, let $\eta = \frac{1}{2}\|e_n\|_{L^2}^2$. Then $e_n$ can be decomposed as $e_n = \eta u_n^*+\psi$, where $\Re(u_n^*, \psi)_{L^2}=0$, i.e., $\psi \in T_{u_n^*}\mathcal{M}$. Using this decomposition, the second term on the RHS of \eqref{equation_5_20} can be written as
    \begin{equation}\label{equation_5_21}
        \langle E''(u_n^*)e_n, e_n \rangle = \langle E''(u_n^*)\psi, \psi \rangle + 2\eta\langle E''(u_n^*)u_n^*, \psi \rangle + \eta^2\langle E''(u_n^*)u_n^*, u_n^* \rangle.
    \end{equation}
    For the first term on the RHS of \eqref{equation_5_21}, by Lemma \ref{lemma_3_1}, we have
    \begin{equation}\label{equation_5_22}
        \langle E''(u_n^*)\psi, \psi \rangle \ge \lambda\|\psi\|_{L^2}^2.
    \end{equation}
    For the second term on the RHS of \eqref{equation_5_21}, since the ground state $u_n^*$ satisfies the Gross-Pitaevskii eigenvalue equation \eqref{equation_2_7}, we can obtain
    \begin{equation}\label{equation_5_23}
        \begin{aligned}
            \langle E''(u_n^*)u_n^*, \psi \rangle &= \lambda\Re(u_n^*, \psi)_{L^2} + 2\beta\Re\left(|u_n^*|^2 u_n^*, \psi\right)_{L^2} = 2\beta\Re\left(|u_n^*|^2 u_n^*, \psi\right)_{L^2} \\
            &\ge -2\beta\||u_n^*|^2 u_n^*\|_{H^{-1}}\|\psi\|_{H_0^1} \ge -2\beta C_2\|u_n^*\|_{L^4}^3\|\psi\|_{H_0^1}.
        \end{aligned}
    \end{equation}
    For the third term on the RHS of \eqref{equation_5_21}, similarly, we have
    \begin{equation}\label{equation_5_24}
        \langle E''(u_n^*)u_n^*, u_n^* \rangle = \lambda\|u_n^*\|_{L^2}^2 + 2\beta\|u_n^*\|_{L^4}^4 > 0.
    \end{equation}
    Substituting inequalities \eqref{equation_5_22}-\eqref{equation_5_24} into equation \eqref{equation_5_21}, and noting that $\eta = \frac{1}{2}\|e_n\|_{L^2}^2$, we obtain
    \begin{equation}\label{equation_5_26}
        \begin{aligned}
            \langle E''(u_n^*)e_n, e_n \rangle &\ge \lambda\|\psi\|_{L^2}^2 - 4\beta\eta C_2\|u_n^*\|_{L^4}^3\|\psi\|_{H_0^1} \\
            &= \lambda\|\psi\|_{L^2}^2 - 2\beta C_2\|u_n^*\|_{L^4}^3\|e_n\|_{L^2}^2\|\psi\|_{H_0^1}.
        \end{aligned}
    \end{equation}
    From the definition of $\psi=e_n-\eta u_n^*$, we can derive
    \begin{align}
        \label{equation_5_27} \|\psi\|_{L^2} &\ge \|e_n\|_{L^2}-\eta\|u_n^*\|_{L^2} \ge \|e_n\|_{L^2}\left(1-\frac{1}{2}C_3\|e_n\|_{H_0^1}\right), \\
        \label{equation_5_28} \|\psi\|_{H_0^1} &\le \|e_n\|_{H_0^1}+\eta\|u_n^*\|_{H_0^1} \le \|e_n\|_{H_0^1}\left(1+\frac{1}{2}C_3^2\|e_n\|_{H_0^1}\|u_n^*\|_{H_0^1}\right).
    \end{align}
    Combining inequalities \eqref{equation_5_26}-\eqref{equation_5_28}, we finally obtain the lower bound estimate for the second term on the right-hand side of \eqref{equation_5_20}:
    \begin{equation}\label{equation_5_30}
        \begin{aligned}
            \langle E''(u_n^*)e_n, e_n \rangle \ge& \lambda\|e_n\|_{L^2}^2\left(1-\frac{1}{2}C_3\|e_n\|_{H_0^1}\right)^2 \\
            &\quad - 2\beta C_2 C_3^2\|u_n^*\|_{L^4}^3\left(1+\frac{1}{2}C_3^2\|e_n\|_{H_0^1}\|u_n^*\|_{H_0^1}\right)\|e_n\|_{H_0^1}^3.
        \end{aligned}
    \end{equation}

    For the remaining terms of \eqref{equation_5_20}, we have
    \begin{align}
        \label{equation_5_31} &\int_\mathcal{D}|e_n|^2\Re(\overline{u_n}e_n)dr \ge -\int_\mathcal{D}|e_n|^3|u_n| dr \ge -\|e_n\|_{L^6}^3\|u_n\|_{L^2} \ge -C_4^3\|e_n\|_{H_0^1}^3, \\ 
        \label{equation_5_32} &\int_\mathcal{D}|e_n|^4 dr \ge 0, \\
        \label{equation_5_33} &\begin{aligned}
            \int_\mathcal{D}(|u_n|^2-|u_n^*|^2)|e_n|^2 dr &= \int_\mathcal{D}(-2\Re\left(\overline{u_n^*}e_n)+|e_n|^2\right)|e_n|^2 dr \\
            &\ge -2\int_\mathcal{D}|e_n|^2|\Re(\overline{u_n^*}e_n)| dr \ge -2\int_\mathcal{D}|e_n|^3|u_n^*| dr \\
            &\ge -2\|e_n\|_{L^6}^3\|u_n^*\|_{L^2} \ge -2C_4^3\|e_n\|_{H_0^1}^3, 
        \end{aligned} \\
        \label{equation_5_34} &\int_\mathcal{D}\left(\Re^2(\overline{u_n}e_n) - \Re^2(\overline{u_n^*}e_n)\right) dr = \int_\mathcal{D}\left(-2\Re(\overline{u_n^*}e_n)+|e_n|^2\right)|e_n|^2 dr \ge -2C_4^3\|e_n\|_{H_0^1}^3.
    \end{align} 
    Substituting inequalities \eqref{equation_5_29}, \eqref{equation_5_30}-\eqref{equation_5_34} back into \eqref{equation_5_20}, we finally obtain
    \begin{equation}\label{equation_5_38}
        \begin{aligned}
            E(u_n^*)-E(u_n) \ge& \Re\left(e_n, \nabla_{H_0^1}E(u_n)\right)_{H_0^1} + \frac{\lambda}{2}\left(1-\frac{1}{2}C_3\delta_n\right)^2\|e_n\|_{L^2}^2 \\
            &\quad - \beta C_2 C_3^2\|u_n^*\|_{L^4}^3\left(1+\frac{1}{2}C_3^2\|u_n^*\|_{H_0^1}\delta_n\right)\delta_n^3 - 4\beta C_4^3 \delta_n^3.
        \end{aligned}
    \end{equation}
    Next, we want to apply Lemma \ref{lemma_5_1}. Note here that the required condition $\rho_1(u_n) \le \mu^{-1} \frac{K(\lambda_2-\lambda)}{4(1+K)\lambda_2}$ is fulfilled if $\rho_2(u_0)=\delta_0$ is sufficiently small. This is seen by using the definition of $\rho_1(u_n)$ and the induction hypothesis, which give us
    \begin{equation}\label{equation_5_39}
        \rho_1(u_n) \,=\, \min_{\omega}\|u_n-e^{\mathrm{i}\omega}u^*\|_{L^2} \,\le\, \|e_n\|_{L^2} \le C_3 \|e_n\|_{H^1_0}  \,=\, C_3 \delta_n \,\le\, C_3 \delta_0.
    \end{equation} 
    Hence, for $\delta_0 \le C_3^{-1}\mu^{-1} \frac{K(\lambda_2-\lambda)}{4(1+K)\lambda_2}$, Lemma \ref{lemma_5_1} is applicable and yields
    \begin{equation*}
        E(u_n^*)-E(u_n) \le \left(\mu\rho_1(u_n)-\frac{K(\lambda_2-\lambda)}{4(1+K)\lambda_2}\right)\delta_n^2.
    \end{equation*}
    Therefore
    \begin{equation*}
        E(u_n^*)-E(u_n) \le \left(\mu C_3 \delta_0-\frac{K(\lambda_2-\lambda)}{4(1+K)\lambda_2}\right)\delta_n^2. 
    \end{equation*}
    Combining \eqref{equation_5_38} and \eqref{equation_5_39}, we obtain the estimate
    \begin{equation}\label{equation_5_36}
        \begin{aligned}
            \Re\left(e_n, \nabla_{H_0^1}E(u_n)\right)_{H_0^1} \le& -\frac{\lambda}{2}\left(1-\frac{1}{2}C_3\delta_n\right)^2\|e_n\|_{L^2}^2 + \left(\mu C_3 \delta_0-\frac{K(\lambda_2-\lambda)}{4(1+K)\lambda_2}\right)\delta_n^2 \\
            &\quad + \beta C_2 C_3^2\|u_n^*\|_{L^4}^3\left(1+\frac{1}{2}C_3^2\|u_n^*\|_{H_0^1}\delta_n\right)\delta_n^3 + 4\beta C_4^3 \delta_n^3.
        \end{aligned}
    \end{equation}

    On the other hand, similar to Lemma \ref{lemma_5_2}, we denote 
    \begin{equation*}
        \gamma(u) = \frac{1+\Re\left(u, \mathcal{G}_{H_0^1}(Vu+\beta|u|^2u-\Omega L_z u)\right)_{L^2}}{\|\mathcal{G}_{H_0^1}u\|_{H_0^1}^2}, \quad \gamma_n=\gamma(u_n), \quad \gamma_n^*=\gamma(u_n^*)=\gamma(\exp(\mathrm{i}\omega_n)u^*)=\lambda. 
    \end{equation*}
    According to the definition of the projected gradient, we have
    \begin{equation}\label{equation_5_37}
        \begin{aligned}
            \Re\left(e_n, \nabla_{H_0^1}^{\mathcal{R}}E(u_n)-\nabla_{H_0^1}E(u_n)\right)_{H_0^1} =& -\gamma_n \Re\left(e_n, \mathcal{G}_{H_0^1}u_n\right)_{H_0^1} = -\gamma_n \Re(e_n, u_n)_{L^2} \\
            =& \frac{\gamma_n}{2}\left(\|u_n\|_{L^2}^2+\|e_n\|_{L^2}^2-\|u_n+e_n\|_{L^2}^2\right) = \frac{\gamma_n}{2}\|e_n\|_{L^2}^2.
        \end{aligned}
    \end{equation}
    Substituting \eqref{equation_5_36} and \eqref{equation_5_37} into \eqref{equation_5_35}, we obtain
    \begin{align*}
        &\|(u_n-u_n^*) - \alpha_n \nabla_{H_0^1}^{\mathcal{R}}E(u_n)\|_{H_0^1}^2 \\
        \le& \left(1+L^2\alpha_n^2\right)\delta_n^2 - \lambda\alpha_n\left(1-\frac{1}{2}C_3\delta_n\right)^2\|e_n\|_{L^2}^2 + \left(2\mu C_3 \delta_0-\frac{K(\lambda_2-\lambda)}{2(1+K)\lambda_2}\right)\alpha_n\delta_n^2 \\
        &\quad + 2\alpha_n\beta C_2 C_3^2\|u_n^*\|_{L^4}^3\left(1+\frac{1}{2}C_3^2\|u_n^*\|_{H_0^1}\delta_n\right)\delta_n^3 + 8 \alpha_n \beta C_4^3 \delta_n^3 + \alpha_n \gamma_n \|e_n\|_{L^2}^2.
    \end{align*}
    Choosing a positive constant $C_5 < \frac{K(\lambda_2-\lambda)}{2(1+K)\lambda_2} - 2\mu C_3 \delta_0$, and using Poincaré's inequality and the estimate for $\gamma_n$ in Lemma \ref{lemma_5_2}, we get 
    \begin{align*}
        &\|(u_n-u_n^*) - \alpha_n \nabla_{H_0^1}^{\mathcal{R}}E(u_n)\|_{H_0^1}^2 \\
        \le& \left(1+L^2\alpha_n^2 - C_5\alpha_n\right)\delta_n^2 + 2\alpha_n\beta C_2 C_3^2\|u_n^*\|_{L^4}^3\left(1+\frac{1}{2}C_3^2\|u_n^*\|_{H_0^1}\delta_n\right)\delta_n^3 + 8\alpha_n\beta C_4^3 \delta_n^3 \\
        &\quad + \left[ \gamma_n - \gamma_n^*\left(1-\frac{1}{2}C_3\delta_n\right)^2 + C_3^{-2}\left(2\mu C_3 \delta_0 - \frac{K(\lambda_2-\lambda)}{2(1+K)\lambda_2} + C_5\right)\right]\alpha_n \|e_n\|_{L^2}^2 \\
        \le& P_1(\alpha_n, \delta_n)\delta_n^2 + P_2(\delta_n)\alpha_n \|e_n\|_{L^2}^2. 
    \end{align*}
    where the polynomials $P_1$ and $P_2$ are
    \begin{align*}
        &P_1(\alpha, \delta) = L^2 \alpha^2 + \left[\beta C_2 C_3^4\|u_n^*\|_{L^4}^3\|u_n^*\|_{H_0^1}\delta^2 + 2\beta\left(C_2 C_3^2\|u_n^*\|_{L^4}^3 + 4 C_4^3\right)\delta - C_5\right]\alpha + 1, \\
        &P_2(\delta) = -\frac{1}{4}\lambda C_3^2 \delta^2 + \left(\frac{2C_3^3\lambda + L_2}{\|\mathcal{G}_{H_0^1} u^*\|_{H_0^1}^2-2C_3^3C_{\rho}} + \lambda C_3\right)\delta + C_3^{-2}\left(2\mu C_3 \delta_0 - \frac{K(\lambda_2-\lambda)}{2(1+K)\lambda_2} + C_5\right). 
    \end{align*}
    If $\delta_n$ satisfies $P_2(\delta_n) < 0$, then
    \begin{align*}
        \left\|(u_n-u_n^*) - \alpha_n \nabla_{H_0^1}^{\mathcal{R}}E(u_n)\right\|_{H_0^1}^2 \le P_1(\alpha_n, \delta_n)\delta_n^2 + P_2(\delta_n)\alpha_n \|e_n\|_{L^2}^2 < P_1(\alpha_n, \delta_n)\delta_n^2. 
    \end{align*}

    On the other hand, let $g_n = \nabla_{H_0^1}^{\mathcal{R}}E(u_n)$, $\tilde{u}_n=u_n-\alpha_n g_n$, $R_n=u_{n+1}-\tilde{u}_n$. According to the estimate in \eqref{equation_4_3} and Lemma \ref{lemma_5_2}, we have
    \begin{equation*}
        \|R_n\|_{H_0^1} \le \frac{\alpha_n^2}{2}(C_u+\alpha_n C_g)C_3^2\|g_n\|_{H_0^1}^2 \le \frac{\alpha_n^2}{2}(C_u+\alpha_n C_g)C_3^2 L^2 \delta_n^2. 
    \end{equation*}
    Furthermore,
    \begin{align*}
        \delta_{n+1} &= \min_{\omega\in [-\pi,\pi)} \|u_{n+1} - e^{\mathrm{i}\omega}u^*\|_{H_0^1} \le \|u_{n+1}-u_n^*\|_{H_0^1} = \|R_n + \tilde{u}_n -u_n^*\|_{H_0^1}\\
        &\le \|\tilde{u}_n-u_n^*\|_{H_0^1} + \|R_n\|_{H_0^1} \le \left[\sqrt{P_1(\alpha_n, \delta_n)} + \frac{\alpha_n^2}{2}(C_u+\alpha_n C_g)C_3^2L^2 \delta_n\right]\delta_n.
    \end{align*}
    When there exists a positive number $\theta<1$ satisfying
    \begin{equation}
        \sqrt{P_1(\alpha_n, \delta_n)} + \frac{\alpha_n^2}{2}(C_u+\alpha_n C_g)C_3^2L^2 \delta_n \le \theta < 1,
    \end{equation}
    we can obtain
    \begin{equation}
        \delta_{n+1} \le \theta \delta_n \le \theta^{n+1}\delta_0. 
    \end{equation}
    By induction, we can now conclude that $\{u_n\}_{n=1}^{\infty}$ converges linearly to the ground state. 

    In summary, for this local linear convergence to hold, it is sufficient to ensure that the initial distance $\delta_0 \le C_{\rho}$ and that the iterative step size $\alpha_n$ and distance $\delta_n$ satisfy the inequalities
    \begin{equation}\label{equation_5_45}
        \sqrt{P_1(\alpha_n, \delta_n)} + \frac{\alpha_n^2}{2}(C_u+\alpha_n C_g)C_3^2L^2 \delta_n < 1, \quad\quad P_2(\delta_n) < 0.  
    \end{equation}
    In fact, the conditions for these inequalities to hold can be given explicitly. For example, if we choose constants $C_{\rho}$ and $C_5$ to be 
    \begin{equation*}
        C_{\rho} = \frac{1}{4}C_3^{-3}\|\mathcal{G}_{H_0^1} u^*\|_{H_0^1}^2, \quad\quad C_5 = \frac{K(\lambda_2-\lambda)}{8(1+K)\lambda_2},
    \end{equation*}
    then these two constants satisfy conditions 
    \begin{align*}
        C_{\rho} &\le \frac{1}{2}C_3^{-3}\|\mathcal{G}_{H_0^1} u^*\|_{H_0^1}^2, \\
        C_5 &< \frac{K(\lambda_2-\lambda)}{2(1+K)\lambda_2} - \frac{2\mu C_3 K(\lambda_2-\lambda)}{8(1+K)\mu\lambda_2 C_3} \le \frac{K(\lambda_2-\lambda)}{2(1+K)\lambda_2} - 2\mu C_3 \delta_0. 
    \end{align*}
    Furthermore, if the maximum step size $\alpha_{\max}$ and initial distance $\delta_0$ satisfy 
    \begin{align}
        &\delta_0\le 1,\quad\quad \delta_0\le \frac{1}{4}C_3^{-3}\|\mathcal{G}_{H_0^1} u^*\|_{H_0^1}^2,\quad\quad \delta_0\le \frac{K(\lambda_2-\lambda)}{8(1+K)\mu\lambda_2 C_3}, \\
        &\delta_0\le \left(\frac{4C_3^3\lambda+2L_2}{\|\mathcal{G}_{H_0^1} u^*\|_{H_0^1}^2}+\lambda C_3\right)^{-1}C_3^{-2}\frac{K(\lambda_2-\lambda)}{8(1+K)\lambda_2}, \\
        &\delta_0\le \left[\beta C_2 C_3^4\|u^*\|_{L^4}^3 \|u^*\|_{H_0^1}+2\beta \left(C_2 C_3^2\|u^*\|_{L^4}^3+4 C_4^3\right)\right]^{-1}\frac{K(\lambda_2-\lambda)}{16(1+K)\lambda_2}, \\
        &\delta_0\le (C_u+C_g)^{-1}C_3^{-2}L^{-2}\frac{K(\lambda_2-\lambda)}{32(1+K)\lambda_2}, \\
        &\alpha_{\max} \le 1,\quad\quad \alpha_{\max} \le \frac{K(\lambda_2-\lambda)}{32(1+K)\lambda_2 L^2}, 
    \end{align}
    then the polynomial $P_2$ is negative, since 
    \begin{align*}
        P_2(\delta_n) &< \left(\frac{2C_3^3\lambda + L_2}{\|\mathcal{G}_{H_0^1} u^*\|_{H_0^1}^2-2C_3^3C_{\rho}} + \gamma_n^* C_3\right)\delta_0 + C_3^{-2}\left(2\mu C_3 \delta_0 - \frac{K(\lambda_2-\lambda)}{2(1+K)\lambda_2} + C_5\right) \\
        &\le C_3^{-2}\frac{K(\lambda_2-\lambda)}{8(1+K)\lambda_2} - C_3^{-2}\left(\frac{2\mu C_3 K(\lambda_2-\lambda)}{8(1+K)\mu\lambda_2 C_3} - \frac{K(\lambda_2-\lambda)}{2(1+K)\lambda_2} + C_5\right) \le 0.
    \end{align*}
    Finally, we have 
    \begin{equation}\label{equation_5_44}
        \sqrt{P_1(\alpha_n, \delta_n)} + \frac{\alpha_n^2}{2}(C_u+\alpha_n C_g)C_3^2L^2 \delta_n \le \sqrt{P_1(\alpha_n, \delta_n)} + \frac{1}{2}(C_u+C_g)C_3^2L^2 \delta_n \alpha_n. 
    \end{equation}
    The right-hand side of \eqref{equation_5_44} is strictly less than $1$ if and only if 
    \begin{equation}\label{equation_5_40}
        P_1(\alpha_n, \delta_n) < \frac{1}{4}(C_u+C_g)^2C_3^4L^4 \delta_n^2 \alpha_n^2 - (C_u+C_g)C_3^2L^2 \delta_n \alpha_n + 1,
    \end{equation}
    while the left-hand side of \eqref{equation_5_40} is
    \begin{equation}\label{equation_5_41}
        P_1(\alpha_n, \delta_n)\le P_1(\alpha_n, \delta_0) \le L^2\alpha_n^2 - \frac{K(\lambda_2-\lambda)}{16(1+K)\lambda_2}\alpha_n + 1, 
    \end{equation}
    and the right-hand side of \eqref{equation_5_40} is
    \begin{equation}\label{equation_5_42}
        \frac{1}{4}(C_u+C_g)^2C_3^4L^4 \delta_n^2 \alpha_n^2 - (C_u+C_g)C_3^2L^2 \delta_n \alpha_n + 1 > - (C_u+C_g)C_3^2L^2 \delta_0 \alpha_n + 1 \ge - \frac{K(\lambda_2-\lambda)}{32(1+K)\lambda_2}\alpha_n + 1. 
    \end{equation}
    Combining \eqref{equation_5_41} and \eqref{equation_5_42}, if the following inequality is satisfied, \eqref{equation_5_40} hold. 
    \begin{equation}\label{equation_5_46}
        L^2\alpha_n^2 - \frac{K(\lambda_2-\lambda)}{16(1+K)\lambda_2}\alpha_n + 1 \le - \frac{K(\lambda_2-\lambda)}{32(1+K)\lambda_2}\alpha_n + 1.
    \end{equation}
    \eqref{equation_5_46} is true when $0<\alpha_n\le \alpha_{\max}\le \frac{K(\lambda_2-\lambda)}{32(1+K)\lambda_2 L^2}$. In conclusion, with such parameter choices, inequalities \eqref{equation_5_45} are satisfied, implying that $\{u_n\}_{n=1}^{\infty}$ converges linearly.
\end{proof}

Finally, we prove Theorem \ref{theorem_3_7}.

\begin{proof}[Proof of Theorem \ref{theorem_3_7}]
    Define 
    \begin{align*}
        &\tilde{\omega}_n = \argmin_{\omega\in[-\pi,\pi)} \|u_n - e^{\mathrm{i}\omega}u^*\|_{a_{u_n}},\qquad \tilde{u}_n^*=e^{\mathrm{i}\tilde{\omega}_n}u^*,\qquad \tilde{e}_n = \tilde{u}_n^* - u_n,\qquad \tilde{\delta}_n = \|\tilde{e}_n\|_{a_{u_n}},\\
        &\omega_n = \argmin_{\omega\in[-\pi,\pi)} \|u_n - e^{\mathrm{i}\omega}u^*\|_{a_{u^*}},\qquad u_n^*=e^{\mathrm{i}\omega_n}u^*,\qquad e_n = u_n^* - u_n,\qquad \delta_n = \|e_n\|_{a_{u^*}}. 
    \end{align*}
    Since $H_0^1$ norm and $a_{u^*}$ norm are equivalent, we only need to show that $\{\delta_n\}_{n=0}^{\infty}$ converges to $0$ at an exponential rate. 

    Similar to the proof of Theorem \ref{theorem_3_5}, we prove it by induction, and assume that $\delta_n\le \tilde{\theta}^n\delta_0$ for some positive constant $\tilde{\theta}<1$, whose value will be given in the proof below. This assumption is trivial when $n=0$. According to~\cite{henning2025convergence}, $\{u_n\}_{n=0}^{\infty}$ is bounded in $H_0^1$ norm. By the norm equivalence in Lemma \ref{lemma_2_2}, for any $n\ge 0$, we have
    \begin{align*}
        \|u_n\|_{a_{u_n}}^2 &\le \left(1 + \frac{1+K}{K}\beta C_1^4 \|u_n\|_{H_0^1}^2\right)\left(1 + \Omega M C_3 + C_3^2\|V\|_{L^\infty}\right) \|u_n\|_{H_0^1}^2, \\
        \|u_n\|_{a_{u^*}}^2 &\le \left(1 + \frac{1+K}{K}\beta C_1^4 \|u^*\|_{H_0^1}^2\right)\left(1 + \Omega M C_3 + C_3^2\|V\|_{L^\infty}\right) \|u_n\|_{H_0^1}^2, 
    \end{align*}
    which implies that $u_n$ is bounded in both $a_{u_n}$ and $a_{u^*}$ norms. Similarly, $u^*$ is also bounded in these norms. We denote their uniform upper bound by $\tilde{C}_u$, i.e.,
    \begin{equation*}
        \sup_{n\ge 0}(\|u_n\|_{a_{u_n}}, \|u_n\|_{a_{u^*}}, \|u^*\|_{a_{u_n}}, \|u^*\|_{a_{u^*}}) \le \tilde{C}_u. 
    \end{equation*}
    Recall that $|u^*| = |\tilde{u}^*_n|$.  Now, for any $v\in H_0^1(\mathcal{D})$, we have
    \begin{align*}
        \left|\|v\|_{a_{u_n}}^2 - \|v\|_{a_{u^*}}^2\right| &= \beta\left|\int_{\mathcal{D}} \left(|u_n|^2-|\tilde{u}_n^*|^2\right)|v|^2 dr\right| \\
        &\le \beta\|u_n-\tilde{u}_n^*\|_{L^4} \|u_n+\tilde{u}_n^*\|_{L^4} \|v\|_{L^4}^2 \\
        &\le 2\beta C_1^4 \left(\frac{1+K}{K}\right)^2 \tilde{C}_u\tilde{\delta}_n \|v\|_{a_{u_n}}^2. 
    \end{align*}
    Therefore,
    \begin{equation*}
        \|v\|_{a_{u^*}}^2 \le \left[1+2\beta C_1^4\left(\frac{1+K}{K}\right)^2 \tilde{C}_u\tilde{\delta}_n\right] \|v\|_{a_{u_n}}^2 \le \left[1+\beta C_1^4\left(\frac{1+K}{K}\right)^2 \tilde{C}_u\tilde{\delta}_n\right]^2 \|v\|_{a_{u_n}}^2. 
    \end{equation*}
    Denote $\tilde{K} = \beta C_1^4\left(\frac{1+K}{K}\right)^2 \tilde{C}_u$. Taking $v = \tilde{e}_n$, we get
    \begin{equation*}
        \delta_n = \min_{\omega} \|u_n-e^{\mathrm{i}\omega}u^*\|_{a_{u^*}} \le \|\tilde{e}_n\|_{a_{u^*}} \le \left(1+\tilde{K}\tilde{\delta}_n\right) \tilde{\delta}_n. 
    \end{equation*}
    Similarly, we have
    \begin{equation*}
        \tilde{\delta}_n \le \left(1+\tilde{K}\delta_n\right) \delta_n. 
    \end{equation*}

    Using the conclusion from Remark \ref{remark_5_1}, similar to the proof of Theorem \ref{theorem_3_5}, it can be shown that there exists a positive number $\theta_1 < 1$ such that for any $n\ge 0$,
    \begin{equation*}
        \tilde{\delta}_{n+1} \le \theta_1 \tilde{\delta}_n. 
    \end{equation*}
    Thus, we obtain
    \begin{equation*}
        \delta_{n+1} \le \left(1+\tilde{K}\tilde{\delta}_{n+1}\right) \tilde{\delta}_{n+1} \le \theta_1 \left(1+\tilde{K}\theta_1 \left(1+\tilde{K}\delta_n\right) \delta_n\right) \left(1+\tilde{K}\delta_n\right) \delta_n. 
    \end{equation*}
    If there exists a positive number $\theta_1 < \tilde{\theta} < 1$ satisfying
    \begin{equation}\label{equation_5_43}
        \left(1+\tilde{K}\theta_1 \left(1+\tilde{K}\delta_n\right) \delta_n\right) \left(1+\tilde{K}\delta_n\right) \le \theta_1^{-1}\tilde{\theta}, 
    \end{equation}
    we have
    \begin{equation*}
        \delta_{n+1} \le \tilde{\theta} \delta_n \le \tilde{\theta}^{n+1} \delta_0. 
    \end{equation*}
    By induction, $\{\delta_n\}_{n=0}^{\infty}$ decays exponentially to $0$ when $n\to\infty$. Because of the hypothesis of induction, $\delta_n\le \tilde{\theta}^n \delta_0$, thus the polynomial constraint \eqref{equation_5_43} holds when $\delta_0$ is sufficiently small.
\end{proof}

\section{Numerical Experiments}\label{section_6}

We conclude this paper with numerical experiments conducted in an environment equipped with an AMD Ryzen 5 4600H (CPU) and an NVIDIA GeForce GTX 1660 Ti (GPU). In the numerical experiments of this section, we consider the optimization problem of the Gross-Pitaevskii energy functional \eqref{equation_2_6} on the 2D square domain $\mathcal{D}=[-6,6]^2$ with parameters as in~\cite{henning2025convergence}. Specifically, we use an anisotropic harmonic trapping potential
\begin{equation*}
    V(x,y)=\frac{1}{2}\left((0.9x)^2 + (1.2y)^2\right),
\end{equation*}
a particle interaction parameter $\beta=100$, and an angular velocity $\Omega=1.2$. For spatial discretization, we use $\mathbb{P}^2$-Lagrange finite elements on a uniform mesh with $(2^8-1)^2$ degrees of freedom. Using the $a_u$ scheme and setting the stopping criterion to a residual of $5\times 10^{-14}$, we compute the minimum energy, corresponding eigenvalue, and the residual of the Gross-Pitaevskii eigenvalue equation as
\begin{align*}
    &E(u^*)\approx 1.64353578,\quad\quad \lambda\approx 4.44781457,\\
    &\left\|-\Delta u^* + Vu^* + \beta|u^*|^2u^* - \Omega L_z u^* - \lambda u^*\right\|_{L^{\infty}}=5.2165\times 10^{-14}. 
\end{align*}
Due to the higher accuracy of $\mathbb{P}^2$ finite elements, these values are smaller than those reported in~\cite{henning2025convergence}. The shape of the ground state solution is shown in Figure \ref{figure_1}, where the left panel displays the density $|u^*|^2$ and the right panel shows the phase $\arg(u^*)$. This result will serve as the benchmark ground state for subsequent experiments.

\begin{figure}[!htp]
    \centering
    \subfigure[density $|u^*|^2$]{
        \includegraphics[width=0.45\textwidth]{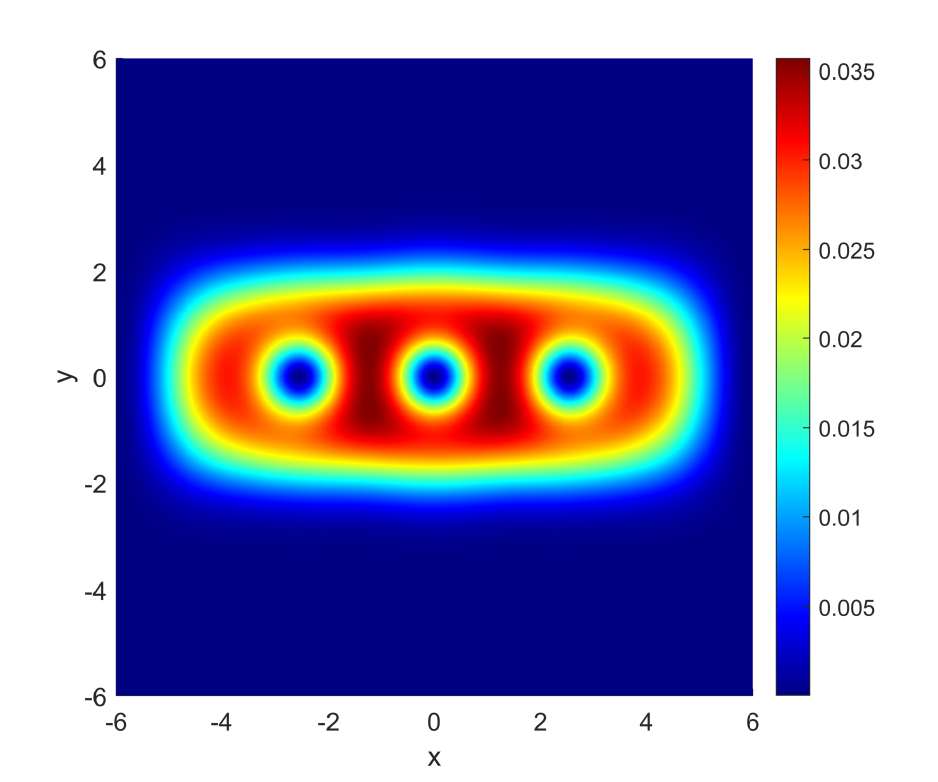}
    }
    \subfigure[phase $\arg(u^*)$]{
        \includegraphics[width=0.45\textwidth]{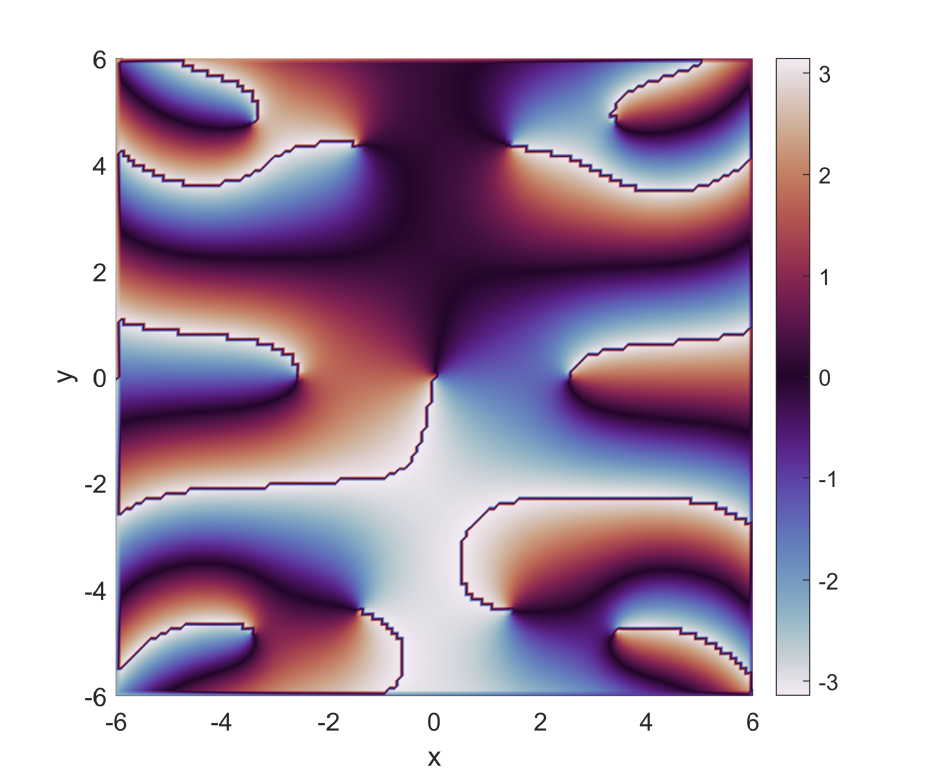}
    }
    \caption{Density and phase of the ground state $u^*$. }
    \label{figure_1}
\end{figure}
\begin{figure}[!htp]
    \centering
    \subfigure[$H_0^1$ scheme]{
        \includegraphics[width=0.30\textwidth]{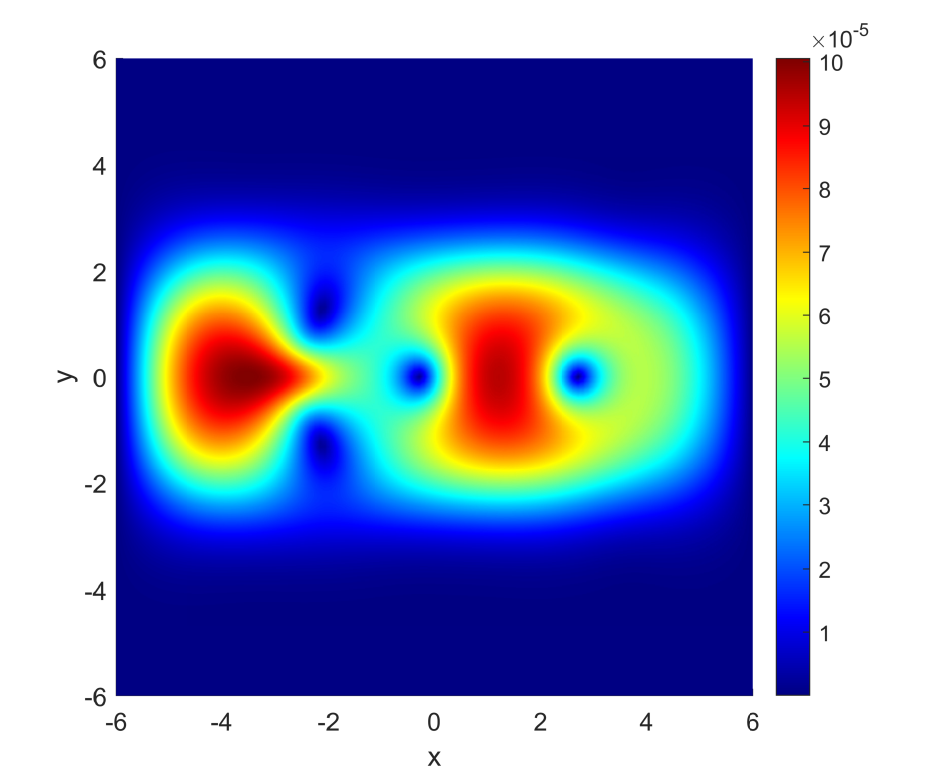}
    }
    \subfigure[$a_0$ scheme]{
        \includegraphics[width=0.30\textwidth]{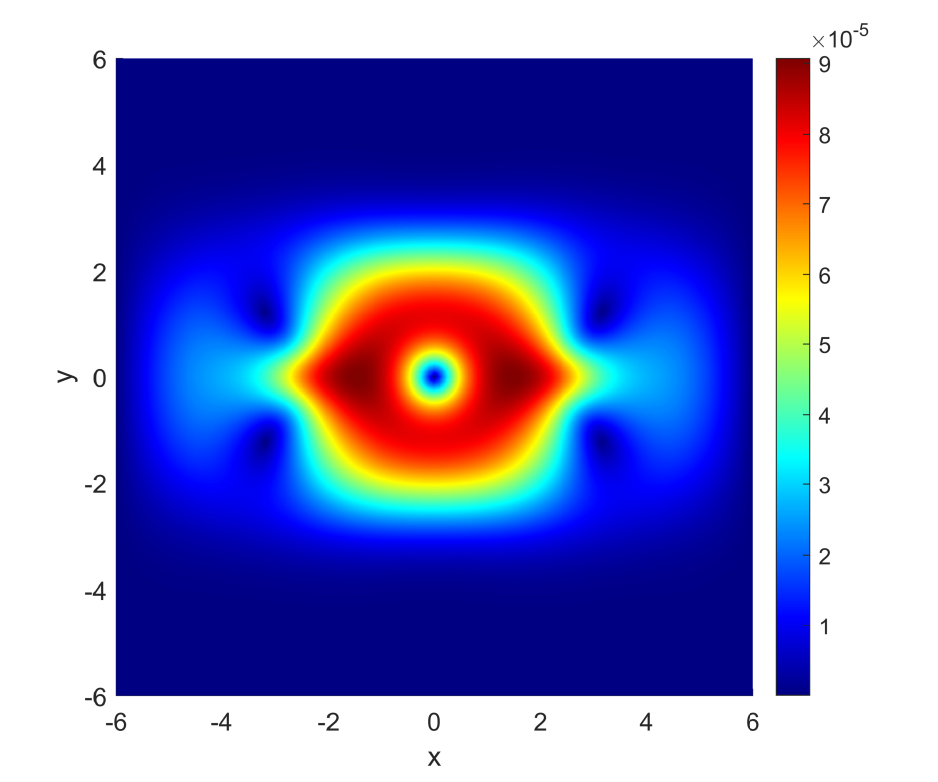}
    }
    \subfigure[$a_u$ scheme]{
        \includegraphics[width=0.30\textwidth]{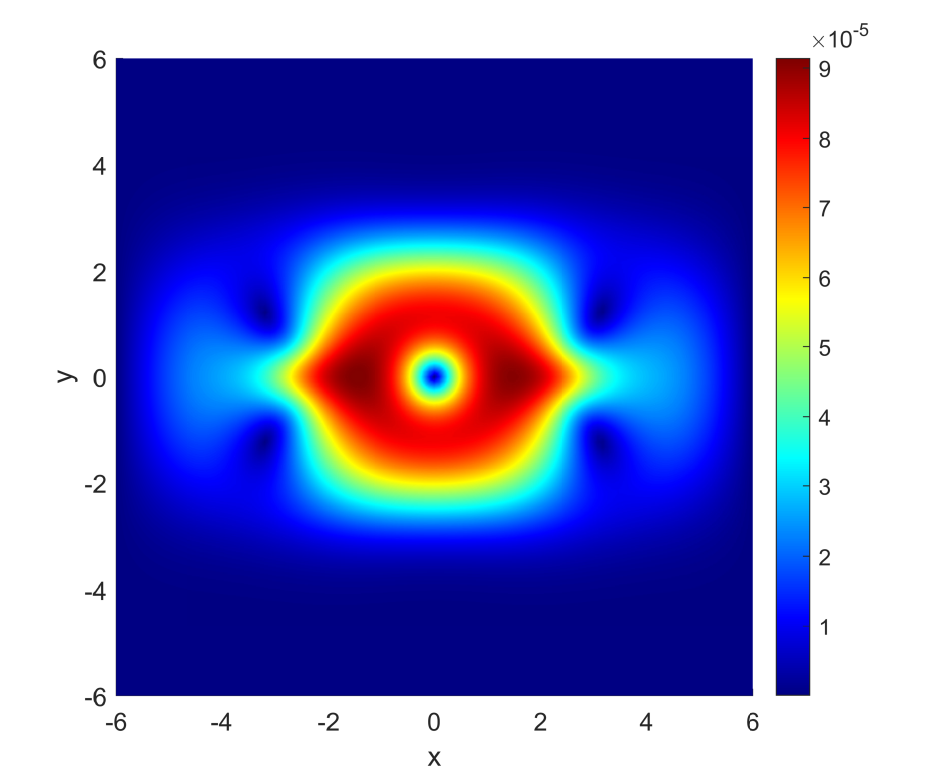}
    }
    \caption{Ground state error between real and calculated ground states.}
    \label{figure_4}
\end{figure}

\subsection{Energy dissipation}
In the first experiment, we explore the energy dissipation rates of three gradient flow schemes ($H_0^1, a_0, a_u$). The initial wave function is set as
\begin{equation*}
    u_0(x,y) = \frac{x+iy}{\sqrt{\pi}}\exp\left(-\frac{x^2+y^2}{2}\right), 
\end{equation*}
and the step size at each iteration is chosen as the optimal step size
\begin{equation*}
    \alpha_n = \argmin_{\alpha} E\left(\frac{u_n-\alpha \nabla_{X}^{\mathcal{R}}E(u_n)}{\|u_n-\alpha \nabla_{X}^{\mathcal{R}}E(u_n)\|_{L^2}}\right). 
\end{equation*}
In the implementation, we use the golden section search~\cite{henning2025convergence}. The stopping criterion is set such that the energy error reached $10^{-9}$. The errors of the calculated ground states with respect to the benchmark ground state mentioned above are shown in Figure \ref{figure_4}. It can be observed that the error of the computed ground states remains within $10^{-5}$. 
\begin{figure}[!htp]
    \centering
    \subfigure[energy error]{
        \includegraphics[width=0.90\textwidth]{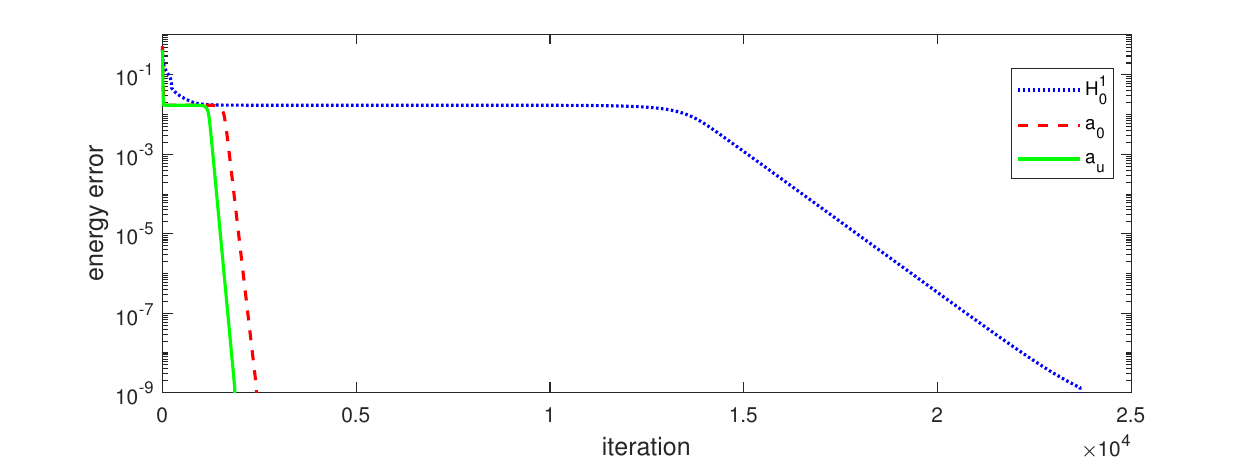}
    }
    \subfigure[optimal step size of $H_0^1$ scheme]{
        \includegraphics[width=0.30\textwidth]{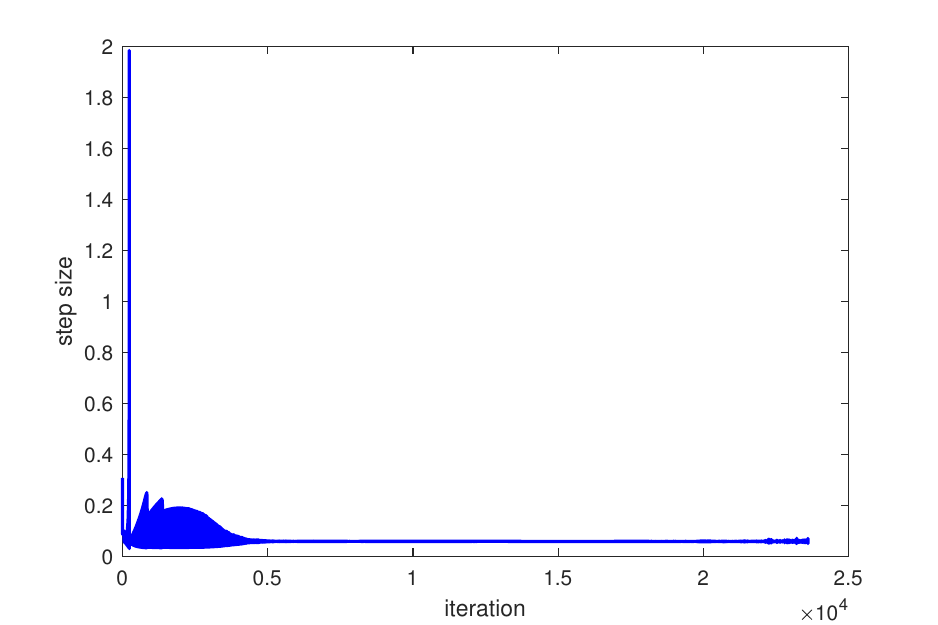}
    }
    \subfigure[optimal step size of $a_0$ scheme]{
        \includegraphics[width=0.30\textwidth]{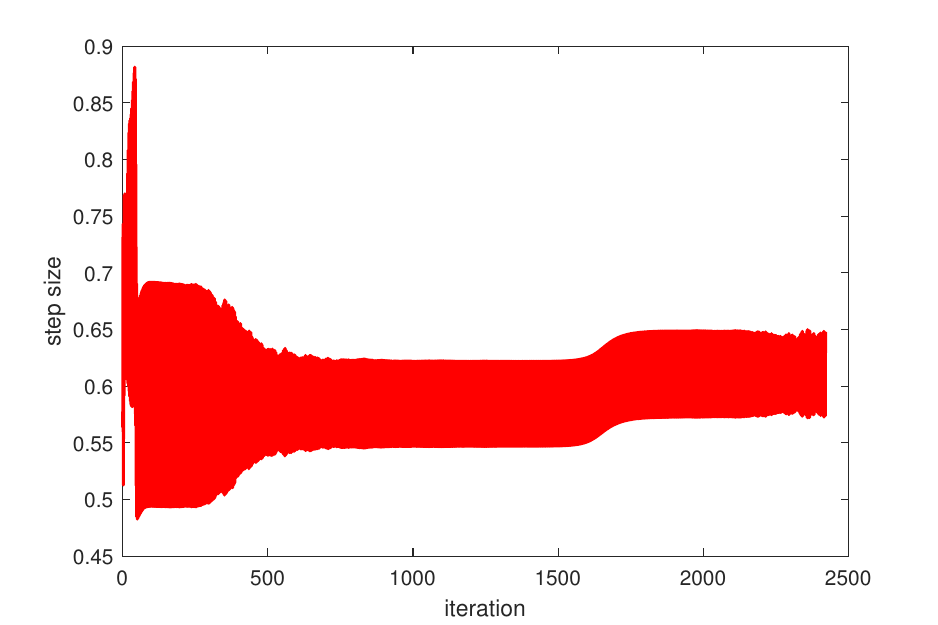}
    }
    \subfigure[optimal step size of $a_u$ scheme]{
        \includegraphics[width=0.30\textwidth]{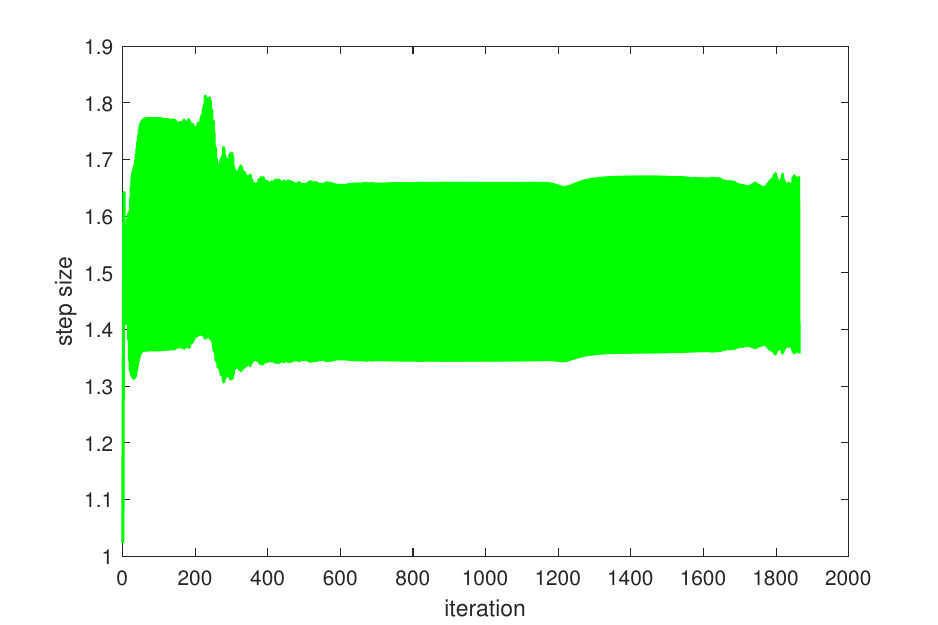}
    }
    \caption{Energy error and optimal step sizes of schemes. }
    \label{figure_2}
\end{figure}
The experimental results for the iterative error are presented in Figure \ref{figure_2}, where the top plot shows the energy error $|E(u_n)-E(u^*)|$ and the bottom plot illustrates the optimal step size $\alpha_n$ for the three schemes. It can be seen that all three schemes have stayed at the same energy level. This phenomenon is attributed to the gradient flow algorithm being trapped at a stationary point, specifically an excited state of the Gross--Pitaevskii energy functional. In the experiment, the $H_0^1$ scheme exhibits a significantly slower energy dissipation rate compared to the $a_0$ and $a_u$ schemes, both in terms of escaping the excited state and achieving linear convergence to the ground state. Furthermore, the optimal step size for the $H_0^1$ scheme during the iteration process is considerably smaller than that of the $a_0$ and $a_u$ schemes. The $a_u$ scheme demonstrate the largest optimal step size and the fastest convergence rate, while the $a_0$ scheme rank in between the two in both aspects. 

\begin{figure}[!htp]
    \centering
    \subfigure[wave function error]{
        \includegraphics[width=0.45\textwidth]{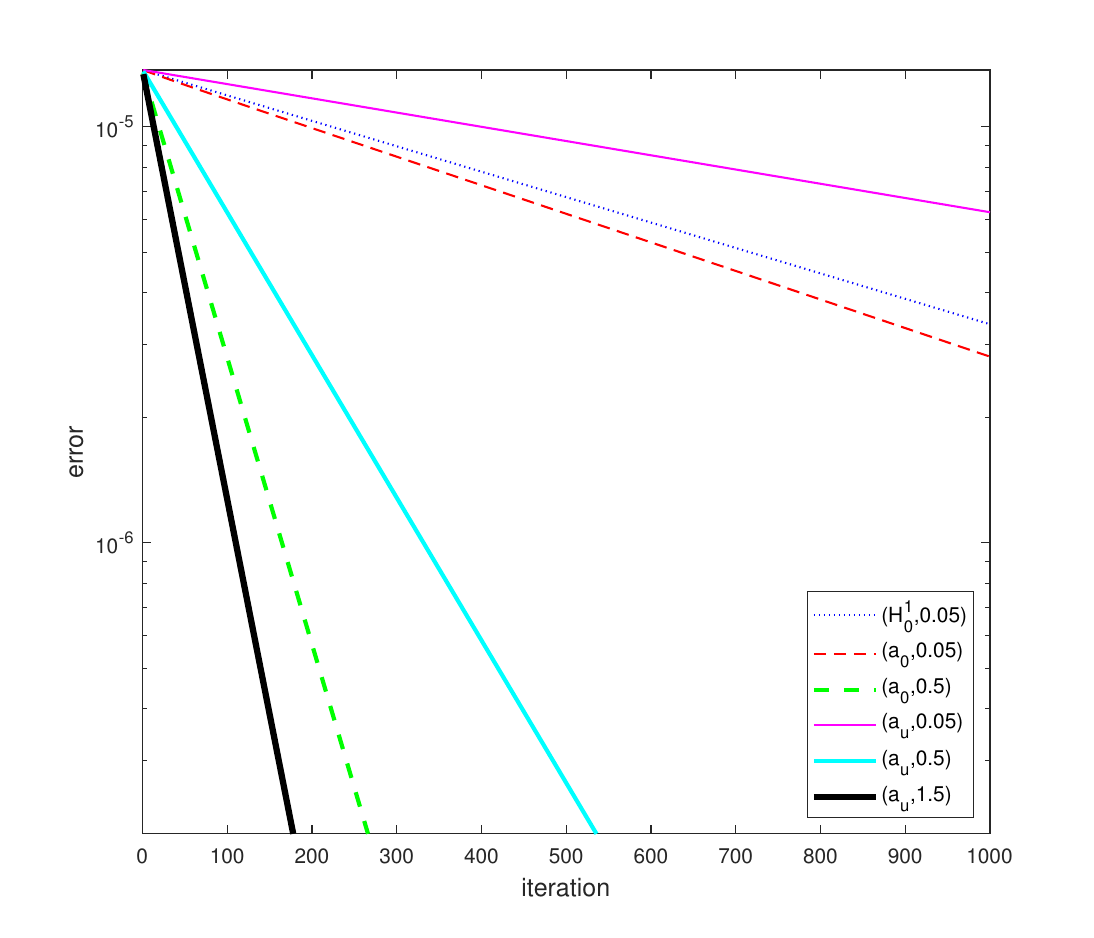}
    }
    \subfigure[error ratio]{
        \includegraphics[width=0.45\textwidth]{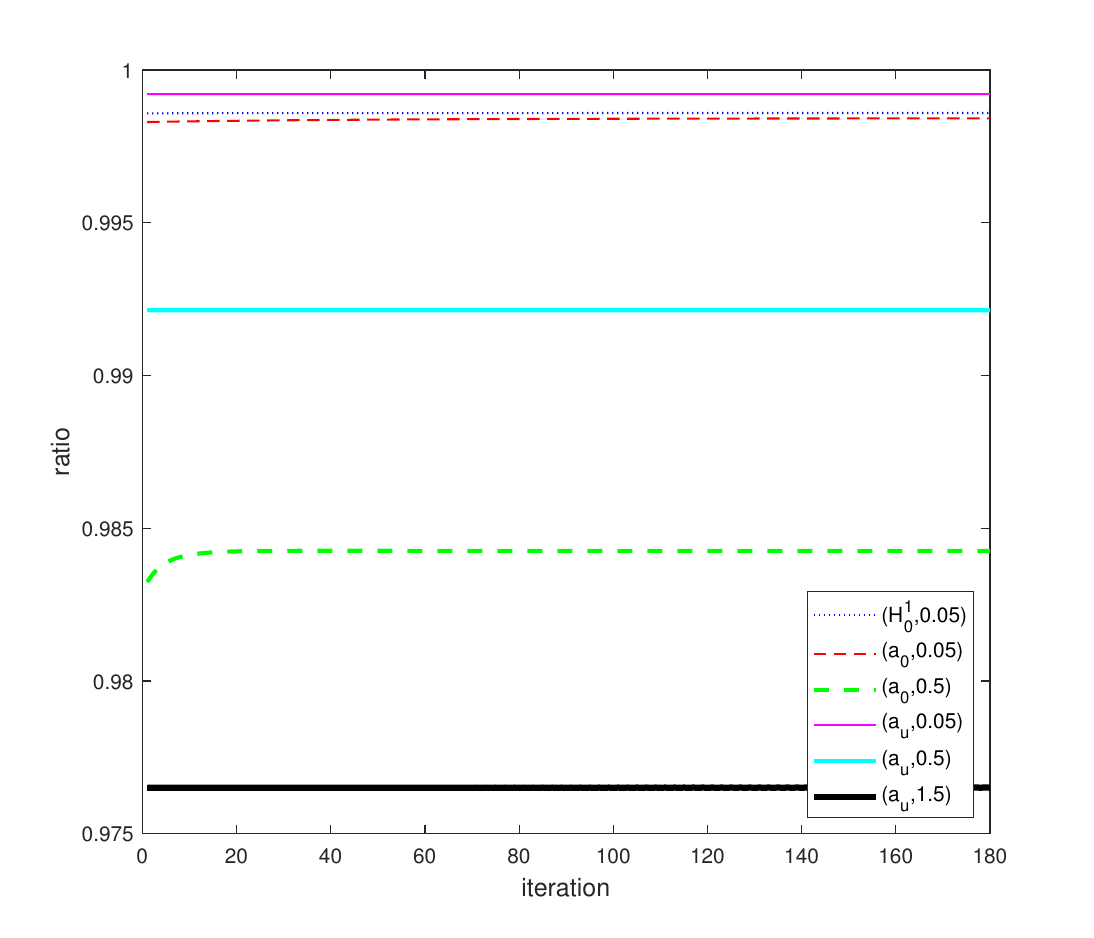}
    }
    \caption{Wave function error and corresponding error ratio.}
    \label{figure_3}
\end{figure}

\subsection{Convergence rates for fixed step sizes}
In the second experiment, we investigate the convergence rates of the solutions for three gradient flow schemes ($H_0^1, a_0, a_u$). To observe the convergence rates of the wave functions for these three gradient flows when close to the ground state, we use an intermediate wave function computed with the $a_u$ scheme from the previous experiment as the initial wave function, which achieved an energy error of $10^{-8}$ for the first time. Considering the step size exploration in the previous experiment, we examine the impact of three fixed step sizes on the convergence rates: $0.05$, $0.5$, and $1.5$. These step sizes approximately correspond to the optimal step sizes for the three gradient flow schemes during iteration. The experimental results are shown in Figure \ref{figure_3}, where the first item in the legend indicates the iteration scheme, and the second item shows the step size. For cases not displayed in Figure \ref{figure_3}, convergence was not achieved. As can be seen in Figure \ref{figure_3}, all three gradient flows demonstrate linear convergence, which corroborates our local linear convergence theorems \ref{theorem_3_5}, \ref{theorem_3_6}, and \ref{theorem_3_7}. We also note that for a step size of $0.05$, the convergence rates of the three gradient flows are very similar. This partially explains the slow energy dissipation rate of the $H_0^1$ scheme in the previous experiment: the optimal step size for the $H_0^1$ scheme is too small. This issue is unresolvable, as the $H_0^1$ scheme fails to converge when larger step size is chosen.

\begin{table}[!ht]
    \centering
    \begin{tabular}{|l|l|l|l|l|l|l|}
    \hline
        Degrees of Freedom & $(2^4-1)^2$ & $(2^5-1)^2$ & $(2^6-1)^2$ & $(2^7-1)^2$ & $(2^8-1)^2$ & $(2^9-1)^2$ \\ \hline
        $H_0^1$ scheme & $0.9990$ & $0.9981$ & $0.9983$ & $0.9983$ & $0.9982$ & $0.9981$  \\ \hline
        $a_0$ scheme & $0.9900$ & $0.9774$ & $0.9800$ & $0.9804$ & $0.9804$ & $0.9762$  \\ \hline
        $a_u$ scheme & $0.9872$ & $0.9730$ & $0.9760$ & $0.9762$ & $0.9761$ & $0.9701$  \\ \hline
    \end{tabular}
    \vspace{20pt}
    \caption[Table 1: ]{impact of different degrees of freedom on the linear convergence rates of the three schemes.}
    \label{table_1}
\end{table}

\subsection{Impact of mesh refinement on convergence rates}
Finally, we investigate the impact of different degrees of freedom on the linear convergence rates of the three gradient flow schemes ($H_0^1, a_0, a_u$). We select an intermediate wave function, which first achieves a wave function error of $10^{-5}$, as the initial wave function. Using the adaptive step size method from Example 1, we compute the average convergence rate over $100$ iterations for each scheme. The results are presented in Table \ref{table_1}. It can be observed that the linear convergence rates of the three schemes are almost unaffected by the degrees of freedom of the mesh. 
	
\section{Summary}

We investigate three Sobolev gradient flow methods for computing the ground states of rotating Bose--Einstein condensates, extending the prior convergence analysis for the non-rotating case to optimization problems involving the Gross--Pitaevskii energy functional with a rotation term. At the theoretical level, we prove the global convergence of the $H_0^1$ and $a_0$ schemes, ensuring that the gradient flow algorithms starting from arbitrary initial values always converge to a stationary point of the energy functional. By using the quotient space and the second-order derivative of the energy functional, we prove the local linear convergence for all three iterative schemes in the neighborhood of a ground state. At the numerical level, our experimental results are highly consistent with the theoretical analysis. Experiment 1 shows the significant advantage of the $a_u$ scheme in convergence speed, Experiment 2 verifies that all three methods achieve linear convergence with appropriate constant step sizes, and Experiment 3 demonstrates the impact of different degrees of freedom of the mesh on the linear convergence rates. 

\subsection*{Funding}

Chen is supported by the National Natural Science Foundation of China (NSFC 12471369 and NSFC 12241101). 

\bibliographystyle{amsxport}
\bibliography{paper}

\end{document}